\documentclass[12pt]{amsart}

\usepackage[margin=1in]{geometry}
\usepackage[utf8]{inputenc}
\usepackage[english]{babel}
\usepackage{amssymb}
\usepackage{amsmath}
\usepackage{amsthm}
\usepackage{dsfont}
\usepackage{amsmath}
\usepackage{cite}
\usepackage{xcolor}

\usepackage [english]{babel}
\usepackage [autostyle, english = american]{csquotes}
\MakeOuterQuote{"}

\usepackage{hyperref}
\hypersetup{
    colorlinks,
    linkcolor={blue!80!black},
    citecolor={blue!60!black},
    urlcolor={blue!80!black},
}

\newtheorem{thm}{Theorem}[section]
\newtheorem{cor}[thm]{Corollary}
\newtheorem{lem}[thm]{Lemma}
\newtheorem{prop}[thm]{Proposition}

\theoremstyle{definition}
\newtheorem{ex}[thm]{Example}

\theoremstyle{definition}
\newtheorem{defi}[thm]{Definition}
\newtheorem*{definition-non}{Definition}

\theoremstyle{definition}
\newtheorem{com}[thm]{Comment}

\theoremstyle{remark}

\newenvironment{rmk}
  {\pushQED{\qed}\rmkx}
  {\popQED\endrmkx}


\newcommand*\interior[1]{\mathring{#1}} 

\newcommand{\spt}{\text{spt}}

\newcommand{\eps}{\varepsilon}
\newcommand{\emps}{\varnothing}

\newcommand{\rn}{\mathbb R^n}

\newcommand{\sn}{S^{n-1}}

\newcommand{\kn}{\mathcal K^n}
\newcommand{\po}{\mathcal P}
\newcommand{\kno}{\mathcal K^n_o}

\newcommand{\bla}{\raise.2ex\hbox{$\scriptstyle\pmb \langle$}}
\newcommand{\sbla}{\raise.1ex\hbox{$\scriptscriptstyle\pmb \langle$}}
\newcommand{\bra}{\raise.2ex\hbox{$\scriptstyle\pmb \rangle$}}
\newcommand{\sbra}{\raise.1ex\hbox{$\scriptscriptstyle\pmb \rangle$}}

\newcommand{\balpha}{\pmb{\alpha}}


\numberwithin{equation}{section}


\begin{document}

\title{The Discrete Gauss Image problem}

\author{Vadim Semenov}
\address{
Brown University, 182 George Street, Providence, MA 02912}
\curraddr{}
\email{vadim\_semenov@brown.edu}


\date{\today}

\dedicatory{}

\subjclass[2010]{52A38, 52A40, 52B11, 35J20, 35J96}

\keywords{Convex Geometry, The Gauss Image Problem, Aleksandrov Condition,  Monge-Amp\`ere equation, Aleksandrov Problem}
\maketitle

%

 \begin{abstract}
We study the Discrete Gauss Image Problem, a generalization of Aleksandrov's classical question on the existence of convex bodies with prescribed integral curvature. We introduce a combinatorial problem called the Assignment Problem and show its equivalence to the Discrete Gauss Image Problem. We establish sufficient (and nearly necessary) geometric conditions on measures that solve both problems. Additionally, we provide new discrete interpretations of some classical concepts related to Aleksandrov's integral curtvature, such as, for example, connecting Aleksandrov relation to Hall's Marriage Theorem. \end{abstract}

\setcounter{tocdepth}{1}
\tableofcontents

\newpage

\section{Introduction}
\subsection{From Aleksandrov to Today}

The classical Aleksandrov problem is a natural and well--known counterpart to the Minkowski problem. Introduced by Aleksandrov in \cite{Aleks1}, it studies the existence and uniqueness of a convex body with prescribed Aleksandrov's integral curvature, the second most important area measure. When the convex body $K$ is sufficiently smooth, the Aleksandrov integral curvature of $K$ (when viewed as a measure on $\partial K$) has the Gauss curvature as its density. When the convex body $K$ is a polytope, it captures the sizes of the normal cones at the vertices, measuring their exterior angles. 

Using more analytic language, Aleksandrov \cite{Aleks0,Aleks1,Aleks} characterized all possible pullbacks of the spherical Lebesgue measure $\lambda$ from the "normal" sphere to the "radial" sphere using the \textbf{\textit{radial Gauss image map}} $\balpha_K$. This map is a composition of the multivalued Gauss map of the body $K$ (the usual Gauss map if the body is sufficiently smooth) and the radial map of $K$ (an inverse to a radial projection from the boundary of $K$ onto a fixed sphere with center located in the interior of $K$),  see \eqref{radial gauss map defenition}. Aleksandrov's approach \cite{Aleks0,Aleks1,Aleks}, as well as those of others, see \cite{Oliker}, involved solving an approximate version of the problem by considering a discrete approximation to one of the measures.  Even the more modern and general variations of the problem, see  \cite{Bertrand, GIP, Semenov}, seem unable to avoid various discrete approximations, which are hidden in the proofs behind the smooth assumptions on measures. 


Observations like these suggest an undiscovered discrete nature of the Aleksandrov Problem, potentially providing natural explanations for some complex smooth phenomena involving Aleksandrov's integral curvature. This prompts us to investigate the Aleksandrov Problem with a discrete measure  $\lambda$, leading to the following question:

\smallskip
\noindent{\textbf{The Discrete Gauss Image Problem}} \textit{Suppose $\lambda$ is a fixed discrete  measure on $\sn$. What are the necessary and sufficient conditions on the spherical measure $\mu$ in relation to $\lambda$, so that there exists a convex body $K$, containing the origin in its interior, such that $\mu$ is the pullback of $\lambda$ via the radial Gauss image map $\balpha_K$, that is}  
\begin{equation}\label{Aug.08.18.1}
  \mu(\cdot)=\lambda(\balpha_K(\cdot))?
\end{equation}

\textit{If such a body exists, to what extent is it unique?}
\smallskip
 
As a side note, the smooth version of equation \eqref{Aug.08.18.1} is associated with the following Monge-Amp\`ere type equation: 
 \begin{equation}\label{PDE}
g\Big(\frac{\nabla h +h \iota}{|\nabla h +h \iota|}\Big)
|\nabla h +h \iota|^{-n}
h\, \det\big(\nabla^2h + h I\big) = f,
\end{equation}
where $h:\sn\to(0,\infty)$ is the unknown function, $f$ and $g$ are, respectively, the densities of the measures $\mu$ and $\lambda$, the map $\iota : \sn \to \sn$ is the identity, $I$ is the standard Riemannian metric on $\sn$, and $\nabla h$
and $\nabla^2 h$ are, respectively, the gradient and the Hessian of $h$ with respect to $I$.

Apart from the direct connection to the works of Aleksandrov, the desire to investigate the transfer of spherical measures via the radial Gauss image map arises from the possibility of linking the classical Brunn-Minkowski theory with the recent and actively investigated dual Brunn-Minkowski theory \cite{HLYZ16}. The mentioned pullback, which we denote by $\lambda(K,\cdot)$, is known as the \textbf{\textit{Gauss image measure}}, see Section \ref{Preliminaries} and equation \eqref{Aug.19.121}. Specifically, when $\lambda$ is a spherical Lebesgue measure, $\lambda(K,\cdot)$ is mentioned Aleksandrov's integral curvature of the body $K$, see \cite{Aleks}. When $\lambda$ is Federer's $(n-1)^\text{th}$ curvature measure, $\lambda(K,\cdot)$ is the surface area measure of Aleksandrov-Fenchel-Jessen \cite{Aleks0}. Finally, the dual curvature measures, see \cite{HLYZ16}, are also the Gauss image measures. This positions the Discrete Gauss Image Problem as a crucial question for establishing the bridge between the Brunn-Minkowski and the dual Brunn-Minkowski theory. 
 
 To provide a vivid geometric illustration of the problem without referring to a measure theory, consider the following special case of the Discrete Gauss Image Problem, which can be derived from the original problem by assuming that $\mu$ and $\lambda$ have the same number of equal-weight atoms. See Section \ref{Preliminaries} for definitions.
 
\smallskip
\noindent{\textbf{Special Case of the Discrete Gauss Image Problem}} \textit{In $\mathbb R ^n$, suppose we are given two sets of unit vectors $\{v_1,\ldots, v_m\}$ and $\{u_1,\ldots, u_m\}$. Let $\mathcal P$ be the set of convex polytopes containing the origin in their interiors with vertices in the directions of $v_i$:
\begin{equation}\label{convex hull notation}
	\po=\{\text{\normalfont conv}\{\beta_iv_i\mid 1 \leq i \leq m\}\mid (\beta_1,\ldots,\beta_m)\in \mathbb R^m_{>0}\}.
\end{equation}
What are the necessary and sufficient conditions on the vectors $v_i$ and $u_j$ for the existence of a convex polytope $P\in\mathcal P$ such that every normal cone at each vertex of $P$ contains exactly one vector from the set $\{u_1,\ldots, u_m\}$ in its interior? That is, for the existence of $P\in \mathcal P$ and a permutation $\sigma\in S_{m}$ such that
\begin{equation}
	u_i\in \interior{N\big (P,r_P(v_{\sigma(i)})\big )},
\end{equation}
where $N\big (P,r_P(v_{\sigma(i)})\big )$ is the normal cone at the vertex $r_P(v_{\sigma(i)})$ of the polytope $P$.} 
\smallskip

More broadly, the discrete Gauss Image Problem is part of the flourishing and active developments of the Brunn-Minkowski and the dual Brunn-Minkowski theories, which have led to many great results both in convexity and beyond. Started by Firey \cite{Firey2}, and continued by Lutwak \cite{LutwakLp}, these theories have recently given rise to many interesting results and conjectures, as, for example, actively developing the log-Brunn-Minkowski inequality conjecture, see \cite{L^p BM,log,KolesnikovMilman,Saroglou,Ramon,Milman,Dongmeng}, the sharp affine $L^p$ Sobolev inequality, see \cite{MR1987375}, as well as other sharp affine isoperimetric inequalities, see \cite{HaberlSchuster, LYZ00jdg, MR1987375}. We refer the reader to Chapters 8 and 9 of Schneider's textbook \cite{S14} and to influential articles \cite{BHP17jdg,BLYZ13jams,CW06adv,Lp Aleks,HLYZ16,HZ18adv,LutwakLp,LO95jdg,LYZ00jdg,LYZ04tams,LYZ06imrn,LYZ16,Ol2,Ol21,Oliker,Sta1,YZCVPDE,YZJDG,Zhao,Zu,Zu2,GaussProb,Milman} for an overview of Minkowski problems.  Readers interested in the regularity of solutions, should consult the works of Cheng--Yau \cite{Cheng}, Caffarelli \cite{Caffarelli}, Nirenberg \cite{Nirenberg}, Pogorelov \cite{Pogorelov}, and Trudinger--Wang \cite{Trudinger}. 

The Aleksandrov problem, in particular, has been investigated by numerous people. Oliker \cite{Oliker} and Bertrand \cite{Bertrand}  provided alternative proofs of Aleksandrov's results \cite{Aleks,Aleks0,Aleks1}. The $L_p$ analogues of the Aleksandrov problem were considered by Huang, Lutwak, Yang and Zhang in \cite{Lp Aleks}; by Mui in \cite{Stephanie}; by Zhao in \cite{Zhao}; and by C. Wu, D. Wu, and Xiang in \cite{Lp Gauss}. As for the Gauss Image Measure, the pullback of various $\lambda$ by the radial Gauss Image map, the corresponding measure characterization problem was addressed in \cite{GIP} by B\"or\"oczky, Lutwak, Yang, Zhang and Zhao under the additional assumption of absolute continuity of  $\lambda$ with respect to the spherical Lebesgue measure. It was also addressed by Bertrand in \cite{Bertrand,Bertrand2} using optimal mass transport methods. Finally, Semenov in \cite{Semenov} studied the problem using a mixture of smooth and discrete assumptions on measures.  	
\subsection{Necessary and Sufficient Conditions}\label{necc and suff disuss}

Suppose we are given discrete spherical measures $\mu$ and $\lambda$:
\begin{equation}\label{Sept 27.111}\begin{split}
  \lambda=\sum\limits_{j=1}^k\lambda_j \delta_{u_j} \\
  \mu=\sum\limits_{i=1}^m \mu_i\delta_{v_i},
  \end{split}
\end{equation} where $\delta_{v_i}$ and $\delta_{u_j}$ denote Dirac measures of single vectors $v_i\in \sn $ and $u_j\in \mathbb{S}^n$, respectively. The problem, in its most general form, has a natural algebraic obstacle. First, let us note that in the Discrete Gauss Image Problem we can restrict our attention to polytopes with verticies in the radial directions of $v_i$, see Proposition \ref{exist polytope}. Note, as well, that if a polytope $P$ with vertices $r_P(v_i)$  is a solution to the Discrete Gauss Image Problem, then the $u_j$'s are contained in the interiors of the normal cones at the vertices. Otherwise, if some $u_j$ were contained on the boundaries of the normal cones, $\lambda(P,\cdot)$ would fail to be a measure, see Example \ref{example of measure not additive}.
Define a function $f$ from $\{1,\ldots, k\}\rightarrow \{1,\ldots, m\}$ such that $f(j)=i$ if and only if $u_j\in \interior{\balpha_P(v_i)}$. Then, if $P$ is a solution, that is, $\mu(\cdot)=\lambda(P,\cdot):=\lambda(\balpha_P(\cdot))$, we obtain for each $i$: 
\begin{equation}\label{Aug.08.19.347}
 \sum\limits_{j\in f^{-1}(i)} \lambda_j = \mu_i .
\end{equation}

Given two discrete measures, a function $f$ satisfying \eqref{Aug.08.19.347}, assigning "normals" $u_j$ to "radial directions of vertices" $v_i$, might not exist in principle due to purely number-theoretic reasons. (For example, if $k=m=3, \lambda_1=1, \lambda_2=2, \lambda_3=3$, and $\mu_1=\mu_2=\mu_3=2$) Therefore, the Discrete Gauss Image Problem effectively has two steps: 
\begin{itemize}
	\item Prerequisite Partition Problem: What are the necessary and sufficient conditions on the measures so that there exists an assignment function $f:\{1\ldots k\}\rightarrow \{1\ldots m\}$ assigning vectors $u_j$ to vectors $v_i$ satisfying \eqref{Aug.08.19.347}?	
\item Geometric Problem: Given an assignment function $f$  satisfying \eqref{Aug.08.19.347}, when does there exist a polytope  $P$  such that $u_j\in \interior{\balpha_P(v_{f(j)})}$, so that $\mu(\cdot) = \lambda(\balpha_P(\cdot)) $? If not, can we guarantee that for at least one of the assignment functions  $f$  satisfying \eqref{Aug.08.19.347}, such a polytope  $P$  exists?
\end{itemize}

\smallskip
In this work, our goal is to concentrate on the geometric problem. A sufficient natural condition that preserves all varieties of the geometric problem is to assume that $\mu$ is a measure with positive integer weights and $\lambda$ is an equal-weight discrete measure with all weights equal to one. We highlight that these assumptions are general enough for geometric purposes: at the very end of Section \ref{Section Assignment}, see Comment \ref{Remark for general problem}, we show that starting with any collection of vectors $\{u_j\}$ and $\{v_j\}$ and an assignment function $f$ between them, one can associate an equal-weight measure $\lambda$ and a discrete measure $\mu$ with positive integer weights such that \eqref{Aug.08.19.347} is satisfied for this particular $f$. Moreover, given any non-equal-weight problem, one can essentially reduce it to a series of equal-weight problems. See Comment \ref{Remark for general problem}.

 The first major result of this paper is the introduction of what we refer to as the \textit{Assignment Problem}, and the establishment of its equivalence to the Discrete Gauss Image Problem. One can think of the Assignment Problem as the discrete version of the variational equation associated with the Aleksandrov Problem. For a more formal statement, see Section \ref{Section Assignment}, and, in particular, Theorem \ref{main},  where the aforementioned equivalence is established.
 
\smallskip
\noindent{\textbf{The Assignment Problem}} \textit{Suppose $\lambda$ is a discrete equal-weight measure on $\sn$, and $\mu$ is a discrete measure on $\sn$. For each assignment function $f:\{1\ldots k\}\rightarrow \{1\ldots m\}$, satisfying \eqref{Aug.08.19.347}, we define} 
\begin{equation}
   A(f):=\sum_{j=1}^k\log{(u_j \cdot v_{f(j)})}, 
\end{equation}
\textit{where we assume that $\log(x) = -\infty$ for $x \leq 0$.}

\textit{What are the necessary and sufficient conditions on $\mu$ with respect to $\lambda$ such that 
\begin{itemize}
	\item $A(\cdot)$ is maximized by exactly one function $f$, satisfying \eqref{Aug.08.19.347},
	\item for this $f,$ $A(f)>-\infty$?
\end{itemize}}
\smallskip

To address the Assignment Problem as well as the Gauss Image Problem, we introduce two new critical concepts: the \textit{weak Aleksandrov relation}, a necessary condition for both problems that addresses the "$-\infty$" part of the Assignment Problem; and the \textit{edge-normal loop} of two given measures, which connects the uniqueness of the maximizer of $A(\cdot)$ to the geometry of bodies.  

\textit{On the Weak Aleksandrov relation.} When characterizing integral curvature, Aleksandrov discovered what we refer to as the Aleksandrov relation, a necessary and sufficient geometric condition for a measure to be the integral curvature of some body, see \cite{Aleks,Aleks0,Aleks1,GIP} and Definition \ref{strong Aleksandrov related}. While this condition is necessary and sufficient for smooth measures, it turns out to be far from necessary when the problem is considered in a more general setting, see Example \ref{weak Aleksandrov example}. To this end, we introduce the weak version of the Aleksandrov condition. Moreover, we establish that this condition is, in fact, a necessary condition for the Discrete Gauss Image Problem, see Section \ref{Weak Aleksandrov Condition} for this and related results. Additionally, by reinterpreting the assignment function as a matching in a bipartite graph with vertices $\{u_j\}$ and $\{v_i\}$, we show that this condition is equivalent to the assumptions of Hall's Marriage Theorem, see Lemma \ref{Hall}. We discuss the geometric interpretation of the Weak Aleksandrov relation and related results in Section \ref{Weak Aleksandrov Condition}.

\smallskip
\smallskip
\noindent{\textbf{Weak Aleksandrov relation}} 
	Two discrete measures $\mu$ and $\lambda$ on $\sn$ are said to be weak Aleksandrov related if 
 	\begin{equation}
  \lambda(\sn)=\mu(\sn)\geq\mu(\omega)+\lambda(\omega^*)
\end{equation}
for each compact, spherically convex set $\omega\subset\sn$, where $\omega^*$ is the spherical polar set, see \eqref{polarset}.
\smallskip
\smallskip


\textit{On the Edge-normal loop.} The intuition behind the requirement for the uniqueness of the maximizer of $A(\cdot)$ in the Assignment Problem is discussed at the beginning of Section \ref{The Assignment Problem from Geometric Point of View}. We will only briefly mention that this is a an essential property of the Discrete Gauss Image Problem, which does not allow for "splitting of the atom," in contrast to the Aleksandrov Problem. (For example, note that $\lambda(\balpha_P(\cdot))$ ceases to be a finitely additive function, and hence fails to be a measure, if a vector-atom of $\lambda$ is normal to an at least two-dimensional facet of $P$. This situation corresponds to splitting the vector-atom between at least two normal cones of vertices of $P$. See Example \ref{example of measure not additive} and the beginning of Section \ref{The Assignment Problem from Geometric Point of View}.) The nature of this geometric condition comes from tracing all possible loops of edges on a potential polytope solution for the Gauss Image Problem. For a more formal statement and the intuition behind this condition, see Section \ref{The Assignment Problem from Geometric Point of View}, and Definition \ref{Sept 17 Defenition}.

\smallskip
\smallskip
\noindent{\textbf{Edge-normal loop free measures}} Discrete measures $\mu$ and $\lambda$ are called edge-normal loop free if there does not exist a piecewise linear loop in $\mathbb R^n$ such that every vertex lies on a different ray $\{t v_i\mid t>0\}$ and each line segment is perpendicular to a different $u_j$. 

\smallskip
\smallskip

\subsection{Main Result}

We are now ready to state the main result of the paper. In the following, to the spherical discrete measure
\begin{equation}
	\mu=\sum\limits_{i=1}^m \mu_i\delta_{v_i},
\end{equation} where $\delta_{v_i}$ is the Dirac measure at the point $v_i\in\sn$, we associate the set of polytopes $\po_\mu\subset\kno$ (as long as $\mu$ is not concentrated on a closed hemisphere) with vertices in the radial directions $v_i$:
\begin{equation}
	\po_\mu =\{\text{conv}\{\beta_iv_i\mid 1 \leq i \leq m\}\mid (\beta_1,\ldots,\beta_m)\in \mathbb R^m_{>0}\}.
\end{equation}
For other definitions, see Section \ref{Preliminaries}.  

\begin{thm}[Existence]\label{intro main statement}
	Suppose $\mu$ and $\lambda$ are Borel measures on $\sn$, such that $\mu$ is discrete and not concentrated on a closed hemisphere, and $\lambda$ is a discrete equal-weight measure.  Suppose $\mu$ and $\lambda$ are edge-normal loop free. Then, there exists a convex body $K\in \kno$ solving the Discrete Gauss Image Problem, that is, satisfying $\mu(\cdot)=\lambda(\balpha_K(\cdot)) $, if and only if $\mu$ and $\lambda$ are weak Aleksandrov related. Furthermore, we can assume  $K$  is a convex polytope belonging to  $\po_\mu$. 
\end{thm}

For the necessity of the weak Aleksandrov relation see Proposition \ref{necessary weak}. For the other direction, see Corollary \ref{Proof of main corollary}, which combines Theorem \ref{main} (establishing equivalence between the Discrete Gauss Image Problem and the Assignment Problem) and Theorem \ref{main edge normal} (solving the Assignment Problem). For the convex polytope part see Proposition \ref{exist polytope}.

The equal-weight assumption, the weak Aleksandrov relation, and the edge-normal loop free condition were introduced and discussed in Subsection \ref{necc and suff disuss}. Here, we want to briefly comment on how far the edge-normal loop free condition is from being a necessary assumption. In fact, two measures are almost always edge-normal loop free (outside of a low dimensional algebraic variety, see Section \ref{Generic Statements} for details). The rough idea is that two measures are edge-normal loop free if and only if all assignment functions $f$ have different values $A(f)$, see Section \ref{The Assignment Problem from Geometric Point of View}. To solve the Gauss Image Problem, we only need to know from the Assignment Problem (see Theorem \ref{main}) that the assignment function maximizing $A(\cdot)$ is unique, yet we notice that, a priori, we do not have any information on which assignment function will maximize $A(\cdot)$. We show that the edge-normal loop free condition is generic in both the measure-theoretic and algebraic senses (as the complement of a manifold), as it is related to a finite system of equations, see Theorem \ref{Genetric Theorem} and equation \eqref{Sept 17.111}. 


 Since the solution to the Discrete Gauss Image Problem is always non-unique when it exists, see Proposition \ref{pertub}, the proper way to address the uniqueness question is to consider the uniqueness of the assignment function. To this end, we show that the assignment function is unique for a fixed pair of measures. 
 
 In the following, measures $\mu$ and $\lambda$ are given by \eqref{Sept 27.111}. We also implicitly assume that $\lambda(\balpha_K(\cdot))$ and $\lambda(\balpha_L(\cdot))$ are measures. For a formal statement regarding the uniqueness, see Proposition \ref{uniq discrete} and Theorem \ref{main}, as well as Section \ref{Preliminaries} and Subsection \ref{necc and suff disuss} which discuss the conditions under which $\lambda(\balpha_K(\cdot))$ and $\lambda(\balpha_L(\cdot))$ are measures.

 \begin{thm}[Uniqueness]
 	Suppose $\lambda$ and $\mu$ are discrete measures as in \eqref{Sept 27.111}, and let $K,L\in\kno$. Then $K$ and $L$ solve the Discrete Gauss Image Problem; that is, 
 	\begin{equation}
 		\mu=\lambda(\balpha_K(\cdot))=\lambda(\balpha_L(\cdot)),
 	\end{equation}
 	if and only if $K$ and $L$ have the same assignment function; that is, for each $i\in \{1,\ldots, m\}$ and $j\in \{1,\ldots, k\}$ the following holds: 
 	\begin{equation}
 		u_j\in \interior{\balpha_K(v_{i})} \Leftrightarrow u_j\in \interior{\balpha_L(v_{i})}.
 	\end{equation}
 \end{thm}

\subsection{Remarks about proofs.}

In problems of this type, one can usually construct a functional on the set of convex bodies such that the body maximizing the functional is a solution to the problem. For these types of problems, it is usually relatively easier to verify that the convex body maximizing the functional is a solution to the given problem, yet it is often significantly harder to show the existence of the body maximizing the functional. Such a functional, see Equation \eqref{GIP functional}, is indeed applicable to this problem. However, the opposite occurs: it is relatively easy to see that there exists a body maximizing the functional, but the maximizing body is not necessarily a solution. This is exactly the opposite of what happens when assuming that $\lambda$ is absolutely continuous, as in \cite{GIP}, for example. The difficulty lies in the interior part of the condition $u_j\in \interior{\balpha_K(v_{j})}$.

Our first goal is establishing the equivalence between the Discrete Gauss Image Problem and the Assignment Problem, see Theorem \ref{main}.  We give two different proofs of this equivalence. The main proof, see Section \ref{Main section}, has the following structure: first, we construct a sequence of absolutely continuous measures $\lambda_\eps\rightarrow\lambda$, which are weak Aleksandrov related to $\mu$. Then we show the existence of a solution $K_\eps$ for the $\lambda_\eps,\mu$-problem under the weak Aleksandrov assumption. We then study the subsequence convergence of $K_\eps$ and find conditions under which the limiting body is the solution to the original question. The last part relies on a generalization of the Birkhoff-von Neumann theorem and transportation polytopes. 

We also give an alternative proof of the equivalence between the two problems, for the special case when both measures are equal-weight, in Section \ref{Alternative Approach}. It is shorter but more computational. It uses some calculations from the recent work of Wyczesany on optimal mass transport \cite{Wyczesany}. The connection here is not unexpected, as one can reformulate the Gauss Image Problem as a mass transport problem, see Bertrand \cite{Bertrand} and Oliker \cite{Oliker}. Moreover, the results in \cite{Wyczesany} advance, among other things, the well-known cyclic monotonicity condition introduced by Rockafellar \cite{cycl}.
 
In Sections \ref{The Assignment Problem from Geometric Point of View} and \ref{Generic Statements}, we analyze the Assignment Problem to derive conclusions for the Discrete Gauss Image Problem, discussing the Edge-normal loop condition. In Section \ref{Weak Aleksandrov Condition}, we discuss the Weak Aleksandrov condition. Section \ref{Section Assignment} is devoted to setting up the equivalence between the Gauss Image Problem and the Assignment Problem.

\section{Preliminaries}\label{Preliminaries}
Let $\kn$ be the set of convex bodies in $\mathbb R^n$, that is compact convex sets with nonempty interior. By $\kno\subset \kn$, we denote those convex bodies which contain origin in their interior. By $\partial K$, we denote the boundary of $K$. Given $K\in\kno$, \textit{the radial map}  $r_K:\sn\rightarrow\partial K$ is defined by 
\begin{equation}
r_K(u):=ru\in\partial K,
\end{equation} 
for some positive $r$. By $N(K,x)$, we denote the \textit{normal cone of $K$ at} $x\in\partial K$, that is the set of all outer unit normals at $x$:
\begin{equation}
  N(K,x)=\{v\in\sn : (y-x)\cdot v\leq 0 \text{ for all } y\in K \}.
\end{equation}

Given $K\in\kno$, we define the radial Gauss image of $\omega\subset\sn$ as: 
\begin{equation}\label{radial gauss map defenition}
\balpha_K(\omega)=\bigcup_{x\in r_K(\omega)}N(K,x)\subset \sn	.
\end{equation}
The radial Gauss Image map, $\balpha_K$, maps sets of $\sn$ to sets of $\sn$. Outside of a spherical set of Lebesgue measure zero, the multivalued map $\balpha_K$ is singular valued. It is known that $\balpha_K$ maps Borel measurable sets to Lebesgue measurable sets. See p.88--89 in \cite{S14} for both of these results.  We denote the restriction of $\balpha_K$ to the corresponding singular valued map by $\alpha_K$. For additional details, we refer the reader to \cite{GIP}. 

When working with the radial Gauss Image, we will often abuse the notation, and write $\balpha_K(v)$  directly, instead of more proper $\balpha_K(\{v\})$, if $v$ is a vector. Moreover, by 
\begin{equation}
	\interior{\balpha_K(\omega)}
\end{equation}   
we denote the interior of the set $\balpha_K(\omega)\subset \sn$.

Suppose $\lambda$ is a Borel measure on $\sn$  and $K\in\kno$. Then $\lambda(K,\cdot)$,  \textit{the Gauss image measure of $\lambda$ via $K$}, is a submeasure defined as the pullback of $\lambda$ via the map $\balpha_K$. That is, for each Borel $\omega\subset\sn$, 
\begin{equation}\label{Aug.19.121}
 \lambda(K,\omega):= \lambda(\balpha_K(\omega)) .
\end{equation}
If $\lambda$ is absolutely continuous, $\lambda(K,\cdot)$ is a Borel measure \cite{GIP}. Note, however, that if $\lambda$ is not absolutely continuous, which is the case of this work, $\lambda(K,\cdot)$ might not even be finitely additive. 
\smallskip

\begin{ex}\label{example of measure not additive}
	let $K$ be a square centered at the origin with sides perpendicular to the unit vectors $u_1,u_2,u_3,u_4$. Let $\lambda=\sum\limits_{i=1}^4\delta_{u_i}$ where $\delta_{u_i}$ are Dirac measures of sets $\{u_i\}$. Let the unit vectors $v_1$ and $v_2$ be such that boundary points $r_K(v_1),r_K(v_2)$ are in the interior of the side of $K$ perpendicular to $u_1$. Then, \begin{equation}\label{Sept 7.1}
	\balpha_K(\{v_1\})=\balpha_K(\{v_2\})=\balpha_K(\{v_1,v_2\})=\{u_1\}.
\end{equation}
Implying that:
\begin{equation}\label{Sept 7.2}
	1=\lambda(K,\{v_1\})=\lambda(K,\{v_2\})=\lambda(K,\{v_1,v_2\}),
\end{equation}
which establishes that $\lambda(K,\cdot)$ is not finitely additive.
\end{ex} 
Yet, we notice that for discrete measure $\lambda=\sum\limits_{j=1}^k\lambda_j \delta_{u_j}$ and $K\in\kno$,
the Gauss Image Measure $\lambda(K,\cdot)$ is a measure if and only if for each $j,$ $u_j$ is normal to a single point on the boundary of $K$. We also note that
\begin{equation}
	\lambda(K,\cdot)=\lambda(cK,\cdot) 
\end{equation}
for any scalar $c>0$, as $\balpha_K=\balpha_{cK}.$

We also note Gauss image measure is a valuation, which has been is proven in \cite{GIP}. We refer the reader to the articles \cite{L1,L2,L3,L4,Sh1,Sh2,Sh3,Al1,Al2,Al3} on the theory of valuations. 


\textit{The radial function} $\rho_K:\sn\rightarrow \mathbb R$ is defined by: \begin{equation}
  \rho_K(u)=\max\{a : au\in K\}.
\end{equation}
In this case, $r_K(u)=\rho_K(u)u$. \textit{The support function} is defined by \begin{equation}
  h_K(x)=\max\{x\cdot y : y\in K\}.
\end{equation}
 For $K\in\kno$ we define its \textit{polar body} $K^*$ as the unique convex body associated with the support function $h_{K^*}:=\frac 1 {\rho_K}$. 
 
  We denote by $\mathfrak r_K$ the radius of the largest ball contained in $K$ and centered at $o$. Similarly, we denote $\mathfrak R_K$ to be the radius of the smallest ball containing $K$ and centered at $o$. We will refer to $\mathfrak r_K$ as \textit{the inner radius} of the body $K$, to $\mathfrak R_K$ as \textit{the outer radius} of $K$, and to the ratio $\frac {\mathfrak r_K} {\mathfrak R_k}$ as \textit{the inner to outer radius ratio} of the body $K$.
 
 It is important to note that for any $K\in\kno$, the following identity holds: \begin{equation}\label{some geometric obvious relation}
 	\min{\rho_K}=\min{h_K}=\mathfrak r_K\leq \mathfrak R_k=\max{\rho_K}=\max{h_K}. 
 \end{equation}\textit{The support hyperplane} to $K$ with an outer unit normal $v\in\sn$ is defined as \begin{equation}
 H_K(v)=\{x:x\cdot v=h_K(v)\}.	
 \end{equation}
By $H^-(\alpha,v)$ we denote the halfspace $\{x:x\cdot v\leq\alpha\}$ and by $H(\alpha,v)$ we denote the hyperplane $\{x:x\cdot v=\alpha\}$. Given a set $S\subset  \rn$ we write its convex hull as
\begin{equation}
	\text{conv}(S).
\end{equation}

For $\omega \subset S^{n-1}$, we define $\text{cone}\,\omega \subset \rn$,
the {\it cone that $\omega $ generates}, as
\begin{equation}
  \text{cone}\,\omega = \{tu:\text{$t\ge 0$ and $u\in\omega$}\}.
\end{equation}
And define $\hat \omega$, the {\it restricted cone that $\omega $ generates}, as
\begin{equation}
  \hat \omega =
\{tu : \text{$0\le t \le 1$ and $u\in \omega$}  \}.
\end{equation}
We are going to say that $\omega \subset S^{n-1}$ is {\it spherically convex} if the
cone that $\omega $ generates is a nonempty convex subset of $\rn$,
that is not all of $\rn$.
Therefore,
 a spherically convex set on $\sn$ is nonempty and
 is always contained in a closed hemisphere of $\sn$. For a subset $\omega\subset S^{n-1}$ which is contained in a closed hemisphere, we define
the {\it spherical convex hull}, $\bla \omega \bra$, of $\omega$, by
\begin{equation}
\bla \omega \bra =\sn\cap\text{conv}\,(\text{cone}\,\omega).
\end{equation}
 Given $\omega\subset S^{n-1}$ contained in a closed hemisphere,
 {\it polar set} $\omega^*$ is defined by:
\begin{equation}\label{polarset}
\begin{split}	
\omega^* &=
\bigcap_{u\in\omega}\{v\in S^{n-1} : u\cdot v\leq 0 \}. 
\end{split}
\end{equation}
We note that polar set is always convex. If $\omega\subset\sn$ is a closed set, and $\alpha$ is a constant, we define its {\it outer parallel set} $\omega_\alpha$, as
\begin{equation}
	\omega_\alpha =
 \bigcup_{u\in \omega} \{v\in \sn : u\cdot v > \cos\alpha \}.
\end{equation}
For recent work on spherical convex bodies, see Besau and Werner \cite{Besau}.

 Given $K\in\kno$ and measures $\mu$ and $\lambda$, we define the functional $\Phi(K,\mu,\lambda)$ by 
\begin{equation}\label{GIP functional}
	\Phi(K,\mu,\lambda):=\int\log\rho_Kd\mu+\int\log\rho_{K^*}d\lambda.
\end{equation}
Note that $\Phi(K,\mu,\lambda)=\Phi_{\mu,\lambda}(K^*)$ in \cite{GIP} notation. It is important to stress that:
\begin{equation}\label{scaling motivation}
\begin{split}
	&\text{If } \mu=\lambda(K,\cdot) \text{, then } \mu=\lambda(cK,\cdot) \text{ for any }c>0.\\
	&\Phi(K,\mu,\lambda)=\Phi(cK,\mu,\lambda) \text{ for any } c>0.
\end{split}
\end{equation}
That is, the nature of the problem is not sensitive to the rescaling of the convex bodies.

	 We call measures $\lambda$ and $\mu$ on $\sn$ \textit{discrete} if they have the form: 
\begin{equation}\begin{split}
  \lambda=\sum\limits_{j=1}^k\lambda_j \delta_{u_j} \\
  \mu=\sum\limits_{i=1}^m \mu_i\delta_{v_i},
  \end{split}
\end{equation}
where $\delta_{u_j}$ and  $\delta_{v_i}$ are the Dirac measures of sets $\{u_j\}$ and $\{v_i\}$ containing single vectors $u_j,v_i\in \sn$, and coefficients $\lambda_j,\mu_i\in\mathbb N$. Discrete measure $\lambda$ is called \textit{equal-weight} if $\lambda_j=1$ for all $j$. Similarly, for $\mu$. Note that we only deal with finitely many weights. The notation is consistent: \begin{equation}\begin{split}
	j,u_j,\lambda_j,k \text{ are always associated with }\lambda, \\
	 i,v_i,\mu_i,m \text{ are always associated with } \mu.
\end{split}
\end{equation} Given a discrete measure $\mu$ not concentrated on a closed hemisphere, we denote by $\po_\mu\subset\kno$ the following set of polytopes:
\begin{equation}\label{convex hull notation}
	\po_\mu =\{\text{conv}\{\beta_iv_i\mid 1 \leq i \leq m\}\mid (\beta_1,\ldots,\beta_m)\in \mathbb R^m_{>0}\}.
\end{equation}
 That is, $\po_\mu$ is the set of convex polytopes containing the origin in their interiors where each vertex can be written as $r_P(v_i)$ for some $i$. Note that if $P\in \po_\mu,$ it can as well be written as the following:
\begin{equation}
  P=(\bigcap_{i=1}^mH^-(\alpha_i,v_i))^* ,
\end{equation}
where $\alpha_i>0$. 

If for a given $\mu$ and $\lambda$, there exists $K\in\kno$ such that $\mu=\lambda(K,\cdot)$, we say that measures $\mu$ and $\lambda$ are related by the convex body $K$. By $\bar\lambda$, we denote the Lebesgue measure on $\sn$. Two Borel measures $\mu$ and $\lambda$ on $\sn$ are called \textit{Aleksandrov related} if 
 	\begin{equation}
  \lambda(\sn)=\mu(\sn)>\mu(\omega)+\lambda(\omega^*),
\end{equation}
for each compact, spherically convex set $\omega\subset\sn$. Two discrete measures $\mu$ and $\lambda$ on $\sn$ are called \textit{weak Aleksandrov related} if 
 	\begin{equation}
  \lambda(\sn)=\mu(\sn)\geq\mu(\omega)+\lambda(\omega^*)
\end{equation}
for each compact, spherically convex set $\omega\subset\sn$. We note the interchangeable use of the terms "Aleksandrov condition" and "Aleksandrov relation". We discuss these conditions in more detail in Section \ref{Weak Aleksandrov Condition}.

We also would like to mention several results related to the Gauss Image Problem. When $\lambda$ is absolutely continious, the following was obtained by K. J. B\"or\"oczky, E. Lutwak, D. Yang, G. Y. Zhang and Y. M. Zhao \cite{GIP} and, separately and in a different way, by Bertrand \cite{Bertrand}:
\begin{thm}
Suppose $\mu$ and $\lambda$ are Borel measures on $\sn$ and $\lambda$ is absolutely continuous. If $\mu$ and $\lambda$ are Aleksandrov related, then there exists a $K\in\kno$ such that $\mu=\lambda(K,\cdot)$.
\end{thm}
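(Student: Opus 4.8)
The plan is to solve the problem by the variational method standard for Minkowski-type existence results. I would parametrize bodies $Q\in\kno$ by their radial functions and, since the hypotheses force $\mu(\sn)=\lambda(\sn)$, work with the scale-invariant functional
\[
\Phi(Q)=\int_{\sn}\log\rho_Q\,d\mu-\int_{\sn}\log h_Q\,d\lambda,
\]
where $\rho_Q$ and $h_Q$ are the radial and support functions of $Q$. Replacing $Q$ by $cQ$ adds $\log c\cdot\mu(\sn)$ to the first integral and $\log c\cdot\lambda(\sn)$ to the second, and these coincide, so $\Phi$ descends to a functional on homothety classes. The goal is to show that $\Phi$ attains a maximum over $\kno$ and that any maximizer $K$ satisfies $\mu=\lambda(K,\cdot)$.

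The first step is the variational formula that makes $\Phi$ the correct object. For a perturbation with $\delta\log\rho_{Q_t}=f$, the support-function term varies as
\[
\frac{d}{dt}\Big|_{t=0}\int_{\sn}\log h_{Q_t}\,d\lambda=\int_{\sn}f\,d\lambda(Q,\cdot),
\]
because at a contact point $h_Q(v)=\rho_Q(u)\,(u\cdot v)$ and the envelope property of the support function kills the first-order motion of the contact point, leaving the pushforward of $\lambda$ by the reverse radial Gauss map associated with $\balpha_Q$, which is exactly the Gauss image measure $\lambda(Q,\cdot)$. Here absolute continuity of $\lambda$ is essential: it guarantees that $\lambda(Q,\cdot)$ is a genuine Borel measure, that the set of boundary points with non-unique normals is $\lambda$-null and contributes nothing, and that $Q\mapsto\lambda(Q,\cdot)$ is weakly continuous under Hausdorff convergence. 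Combined with the trivial variation $\frac{d}{dt}\int\log\rho_{Q_t}\,d\mu=\int f\,d\mu$, stationarity of $\Phi$ at a maximizer $K$ forces $\int f\,d\mu=\int f\,d\lambda(K,\cdot)$ for all admissible $f$, hence $\mu=\lambda(K,\cdot)$.

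The hard part, and the step where the Aleksandrov relation is consumed, is existence of the maximizer. After normalizing (say $\max_u\rho_Q=1$), the admissible class is still not compact: a maximizing sequence $Q_j$ may degenerate by collapsing toward a lower-dimensional convex set or by pushing the origin toward the boundary, so that $\log\rho_{Q_j}$ or $\log h_{Q_j}$ becomes unbounded. I would show that any such degeneration is detected by a compact spherically convex set $\omega$ on which the radial contribution concentrates together with its polar $\omega^*$ on which $h_{Q_j}$ decays, and that the net effect drives $\Phi(Q_j)\to-\infty$ precisely when $\mu(\omega)+\lambda(\omega^*)<\mu(\sn)=\lambda(\sn)$. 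Thus the strict inequality in the Aleksandrov relation is exactly the a priori bound that prevents escape of the maximizing sequence, so the supremum is finite and attained at a genuine body $K\in\kno$ with the origin in its interior, to which the Euler–Lagrange analysis of the previous paragraph applies. I expect this non-degeneracy estimate to be the main obstacle; the variational formula and the weak continuity of $Q\mapsto\lambda(Q,\cdot)$ are technical but routine once the bound on the minimizing sequence is secured.
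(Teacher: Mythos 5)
This theorem is not proved in the paper at all --- it is quoted from \cite{GIP} as background, and the existence input the paper actually relies on is the weak-Aleksandrov variant deferred to the Appendix (Theorem \ref{appendix main}). Your proposal is, in outline, the same variational argument as the original proof in \cite{GIP}: your functional coincides with \eqref{GIP functional} (since $\rho_{K^*}=1/h_K$, so $\int\log\rho_{K^*}\,d\lambda=-\int\log h_K\,d\lambda$), the envelope-type variational formula for $\int\log h_{Q_t}\,d\lambda$ is their Aleksandrov-style lemma and is precisely where absolute continuity of $\lambda$ enters, and the strict inequality in the Aleksandrov relation is consumed exactly where you place it, as the a priori estimate ruling out degeneration of a maximizing sequence. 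Keep in mind that essentially all of the analytic difficulty of the theorem lives in the non-degeneracy estimate you only sketch (``I would show that any such degeneration is detected by\ldots''), so what you have is the correct strategy, matching the original, rather than a complete proof.
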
 
Moreover, it was shown that the Aleksandrov relation is a necessary assumption for the existence of a solution to the Gauss Image problem, if one of the measures is assumed to be absolutely continuous and strictly positive on open sets \cite{GIP}. In this case, the solution to the Gauss Image problem is shown to be unique up to a dilation. 

The results with mixed discrete and smooth assumptions for measures were established by Semenov in \cite{Semenov}, and are used in the proof of Theorem \ref{main}. Since the mixed problem uses slightly different version of the weak Aleksandrov relation (which is appropriate for measures without any additional discrete or smooth assumptions), we mention these results in the appendix, see Theorem \ref{appendix main} and Lemma \ref{weak Aleks bound}, to not cause confusion with weak Aleksandrov relation for discrete measures.

We use the books of Schneider \cite{S14} as our standard reference. The books of Gruber and Gardner are also good alternatives \cite{G06book,Gruberbook}.

\section{Weak Aleksandrov Condition}\label{Weak Aleksandrov Condition}

Let $\mu$ and $\lambda$ be discrete equal-weight measures. Suppose there exists a solution $P\in\kno$ such that $\mu=\lambda(P,\cdot)$. Clearly, this implies $m=\mu(\sn)=\lambda(\sn)=k$. Note that if $u\in \{u_1,\ldots ,u_m\}$ is contained in normal cone of a vertex at direction $v \in \{v_1,\ldots ,v_m\}$, then $uv>0$ since $P\in\kno$. (More formally, since $u$ is normal to the boundary point $r_P(v)$ of $P,$ and $P$ contains the origin at the interior, we obtain that $r_P(v)u=h_P(u)>0$. Since $r_P(v)=\rho_P(v)v,$ we obtain $\rho_P(v)uv>0,$ from which it follows that $uv>0$.)

So, before we even attempt to find the solution for the given measures, we should guarantee the existence of a pairing between two sets of vectors such that $uv>0$ in each pair. This leads us two to questions: 
\begin{itemize}
	\item Does there exist a good necessary assumption that guarantees the existence of a pairing between vectors $\{u_1\ldots u_m\}$ and $\{v_1\ldots v_m\}$ such that $uv>0$ for each pair?
	\item For a specified pairing, does there exist a solution?
\end{itemize}
  The answer to the first part turns out to be the weak Aleksandrov condition.

 \begin{defi}\label{strong Aleksandrov related}
 	Two Borel measures $\mu$ and $\lambda$ on $\sn$ are called \textit{Aleksandrov related} if 
 	\begin{equation}
  \lambda(\sn)=\mu(\sn)>\mu(\omega)+\lambda(\omega^*)
\end{equation}
for each compact, spherically convex (See Section \ref{Preliminaries}) set $\omega\subset\sn$. In above, the set $\omega^*$ is defined as a polar set: 
\begin{equation}
\omega^*:=\bigcap_{u\in\omega}\{v\in\sn : uv\leq 0\}	
\end{equation}
\end{defi}

\begin{defi}\label{weak Aleksandrov related}
	Two discrete measures $\mu$ and $\lambda$ on $\sn$ are called \textit{weak Aleksandrov related} if 
 	\begin{equation}
  \lambda(\sn)=\mu(\sn)\geq\mu(\omega)+\lambda(\omega^*)
\end{equation}
for each compact, spherically convex set $\omega\subset\sn$.
\end{defi}

Note that we use the phrases  \textit{"Aleksandrov condition"} and \textit{"Aleksandrov relation"} interchangeably. Since for any $\omega\subset \sn$ a compact spherically convex set, $\sn\setminus\omega^*=\omega_{\frac\pi2}$ and $\omega^{**}=\omega$, we immediately obtain the following equivalent definitions:

\begin{prop}
Two Borel measures $\mu$ and $\lambda$ on $\sn$ are Aleksandrov related if and only if $\mu(\sn)=\lambda(\sn)$, and for each compact spherically convex set $\omega\subset\sn$, \begin{equation}
 	\mu(\omega)<\lambda(\omega_{\frac\pi2}).
 \end{equation}
\end{prop}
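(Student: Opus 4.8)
The plan is to prove the equivalence of the two formulations of the Aleksandrov relation by reducing everything to the two set-theoretic identities mentioned just before the statement, namely $\sn\setminus\omega^*=\omega_{\pi/2}$ and $\omega^{**}=\omega$ for a compact spherically convex set $\omega$. First I would note that since the condition $\lambda(\sn)=\mu(\sn)$ appears verbatim in both formulations, it suffices to show that, under this total-mass equality, the inequality
\begin{equation}
\mu(\sn)>\mu(\omega)+\lambda(\omega^*)\quad\text{for all compact spherically convex }\omega
\end{equation}
is equivalent to
\begin{equation}
\mu(\omega)<\lambda(\omega_{\pi/2})\quad\text{for all compact spherically convex }\omega.
\end{equation}

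The key computation is to rewrite $\lambda(\omega^*)$ in terms of $\lambda(\omega_{\pi/2})$ using the partition of the sphere. Here I would use $\sn\setminus\omega^*=\omega_{\pi/2}$, which says $\omega^*$ and $\omega_{\pi/2}$ are complementary in $\sn$ up to the measure-theoretic boundary; taking $\lambda$-measures and using $\lambda(\sn)=\mu(\sn)$ gives $\lambda(\omega^*)=\mu(\sn)-\lambda(\omega_{\pi/2})$, so that
\begin{equation}
\mu(\sn)-\mu(\omega)-\lambda(\omega^*)=\lambda(\omega_{\pi/2})-\mu(\omega).
\end{equation}
Thus the first inequality holds for $\omega$ precisely when $\mu(\omega)<\lambda(\omega_{\pi/2})$ holds for that same $\omega$, and the two conditions are equivalent quantifier-by-quantifier over all compact spherically convex $\omega$. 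The role of $\omega^{**}=\omega$ is only to confirm that the polar operation is an involution on this class, so that ranging over $\omega$ and over $\omega^*$ sweeps out the same family and no set is missed.

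The one subtlety I would want to address carefully is the measure-theoretic status of the complement identity $\sn\setminus\omega^*=\omega_{\pi/2}$. The set $\omega_{\pi/2}=\bigcup_{u\in\omega}\{v:u\cdot v>0\}$ is open while $\omega^*$ is closed, so their union is all of $\sn$ but they genuinely overlap only on a shared boundary $\{v:u\cdot v=0\text{ for some }u\in\omega\text{ and }u\cdot v\le 0\text{ for all }u\in\omega\}$, which has empty interior. Since this is a set-theoretic identity $\sn\setminus\omega^*=\omega_{\pi/2}$ exactly (the strict inequality in $\omega_{\pi/2}$ matches the non-strict one defining $\omega^*$ as complement), the additivity $\lambda(\omega^*)+\lambda(\omega_{\pi/2})=\lambda(\sn)$ is exact and requires no null-set argument.

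I expect the main obstacle, such as it is, to be purely bookkeeping rather than mathematical: verifying the complement identity $\sn\setminus\omega^*=\omega_{\pi/2}$ with the correct strict-versus-non-strict inequalities, and confirming that the quantifier "for each compact spherically convex $\omega$" transports correctly under the substitution. Given these, the proposition follows immediately by the displayed algebraic rearrangement, and no compactness, convergence, or approximation argument is needed.
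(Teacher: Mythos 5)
Your proposal is correct and follows essentially the same route as the paper, which obtains this proposition immediately from the identities $\sn\setminus\omega^*=\omega_{\frac\pi2}$ and $\omega^{**}=\omega$; your De Morgan complementation together with the algebraic rearrangement under $\mu(\sn)=\lambda(\sn)$ is precisely that argument made explicit. One small correction of wording only: $\omega^*$ and $\omega_{\frac\pi2}$ are exactly disjoint (there is no overlap on a shared boundary, since the strict and non-strict inequalities are exact complements), which is what your parenthetical ultimately confirms and what makes the additivity $\lambda(\omega^*)+\lambda(\omega_{\frac\pi2})=\lambda(\sn)$ immediate.
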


\begin{prop}\label{Aleks Relation}
 Two discrete Borel measures $\mu$ and $\lambda$ on $\sn$ are weak Aleksandrov related if and only if $\mu(\sn)=\lambda(\sn)$, and for each compact spherically convex set $\omega\subset\sn$, \begin{equation}
 	\mu(\omega)\leq\lambda(\omega_{\frac\pi2}).
 \end{equation}
 Or, alternatively, for each compact spherically convex set $\omega\subset\sn$,
 \begin{equation}
 	\lambda(\omega)\leq\mu(\omega_{\frac\pi2}).
 \end{equation}
\end{prop}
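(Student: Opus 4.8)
The plan is to derive each of the two displayed reformulations directly from Definition \ref{weak Aleksandrov related} by means of the two set identities recorded immediately before the statement, namely $\sn\setminus\omega^*=\omega_{\frac\pi2}$ and $\omega^{**}=\omega$, which hold for every compact spherically convex $\omega\subset\sn$. Both reformulations will follow from elementary complementation together with the hypothesis $\mu(\sn)=\lambda(\sn)$, so no new geometric input is required beyond these identities.

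For the first reformulation I would argue as follows. The polar set $\omega^*$ is closed while $\omega_{\frac\pi2}$ is open, and the identity $\sn\setminus\omega^*=\omega_{\frac\pi2}$ exhibits them as a partition of $\sn$; hence finite additivity gives $\lambda(\omega^*)=\lambda(\sn)-\lambda(\omega_{\frac\pi2})$. Substituting this into the defining inequality $\mu(\omega)+\lambda(\omega^*)\le\lambda(\sn)$ and cancelling $\lambda(\sn)$ yields exactly $\mu(\omega)\le\lambda(\omega_{\frac\pi2})$. Every step is reversible, so under $\mu(\sn)=\lambda(\sn)$ the two conditions are equivalent.

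For the second reformulation I would exploit that polarity is an involution on the class of compact spherically convex sets. First I would check that $\omega^*$ again belongs to this class: it is closed, hence compact; it is convex, as noted in the preliminaries; it is nonempty, since if $\omega$ lies in the hemisphere $\{u\cdot w\ge 0\}$ then $-w\in\omega^*$; and it lies in a hemisphere, since $\omega^*\subset\{v:u_0\cdot v\le 0\}$ for any fixed $u_0\in\omega$. Now apply the first reformulation with $\omega$ replaced by $\eta^*$, where $\eta$ is an arbitrary compact spherically convex set. Using $\eta^{**}=\eta$ one has $(\eta^*)_{\frac\pi2}=\sn\setminus\eta$ and, by the complement computation above, $\mu(\eta^*)=\mu(\sn)-\mu(\eta_{\frac\pi2})$; feeding these into $\mu(\eta^*)\le\lambda((\eta^*)_{\frac\pi2})$ and invoking $\mu(\sn)=\lambda(\sn)$ converts it into $\lambda(\eta)\le\mu(\eta_{\frac\pi2})$. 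Since polarity is a bijection of compact spherically convex sets onto themselves, the quantifier over $\omega$ transfers to a quantifier over $\eta$, giving the second form.

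The only point requiring genuine care is this last bijectivity: one must confirm that $\omega\mapsto\omega^*$ restricts to an involution of the compact spherically convex sets so that the universal quantifier passes cleanly from one formulation to the other. The remaining measure-theoretic steps are routine for discrete (indeed any finite) measures, where the closed/open partition $\omega^*\sqcup\omega_{\frac\pi2}=\sn$ causes no boundary ambiguity.
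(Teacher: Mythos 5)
Your proposal is correct and takes essentially the same route as the paper, which states the proposition as an immediate consequence of the identities $\sn\setminus\omega^*=\omega_{\frac\pi2}$ and $\omega^{**}=\omega$: your complementation argument gives the first form, and your use of the polarity involution on compact spherically convex sets (justified by $\omega^{**}=\omega$) gives the second. The one point you flag as needing care—that $\omega\mapsto\omega^*$ is a bijection of this class onto itself—is exactly the content of that second identity, so your write-up simply makes the paper's "immediately" explicit.
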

\begin{rmk}
Sometimes we will write Strong Aleksandrov related in place of Aleksandrov related to emphasize the difference.	
\end{rmk}

The difference between the two conditions is demonstrated by the following example: 

\begin{ex}\label{weak Aleksandrov example}
Take any equilateral triangle $P$ in $\mathbb{R}^2$ centered at the origin with vertices on the unit sphere. Let $v_1,v_2,v_3$ be such that $r_P(v_i)=v_i$ are different vertices of the triangle. Let $\mu=\lambda$ be the discrete equal-weights measure:\begin{equation}
  \lambda=\mu=\sum\limits_{i=1}^3\delta_{v_i}.
\end{equation}
Then, clearly, $\mu=\lambda(P,\cdot)$. Note that $\mu$ and $\lambda$ are weak Aleksandrov related but not strong Aleksandrov related. It is also interesting to note that for these particular measures, any triangle   $P\in\po_\mu$  is a solution. We leave the details to the reader.
\end{ex}

The next proposition provides a necessary condition for two measures to be related by a convex body. In particular, it shows that the weak Aleksandrov relation is a necessary condition for two discrete measures to be related by a convex body. 

\begin{prop}\label{necessary weak}
Given two Borel measures $\mu$ and $\lambda$ on $\sn$, suppose they are related by a convex body $K\in\kno$. That is, $\mu=\lambda(K,\cdot)$. Then $\mu(\sn)=\lambda(\sn)$, and there exists a uniform $\alpha>0$ such that for each compact, spherically convex set $\omega\subset\sn$, 
	\begin{equation}\label{Sept 22 more refined statement}
		\mu(\omega)\leq\lambda(\omega_{\frac\pi2-\alpha}).
	\end{equation}	
	Moreover, if $\mu$ and $\lambda$ are discrete, then they are weak Aleksandrov related.
\end{prop}
\begin{proof}
	Since $K\in\kno$, there exists a $c>0$ such that $\frac{\mathfrak r_K}{\mathfrak R_K}>c$. Consider some $u\in\sn$ and $v\in\balpha_K(u)$. Then from \eqref{some geometric obvious relation},
	\begin{equation}
	\mathfrak r_K\leq h_K(v)= \rho_K(u)uv \leq \mathfrak R_Kuv. 	
	\end{equation}
Hence, $c<\frac{\mathfrak r_K}{\mathfrak R_K}\leq uv$. So, for each $u\in\sn$, we have \begin{equation}
  \balpha_K(u)\subset u_{ \arccos(c)}\subset u_{\frac \pi 2 - \alpha}, 
  \end{equation}
  for some $\alpha$ where $0<\alpha<\frac \pi 2$. Therefore, for any compact spherically convex $\omega$, since $\omega_{\frac \pi 2 - \alpha}=\cup_{u\in \omega} u_{\frac \pi 2 - \alpha}$, we obtain  \begin{equation}
  \mu(\omega)=\lambda(K,\omega)=\lambda(\balpha_K(\omega))\leq\lambda(\omega_{\frac \pi 2 - \alpha}). 
\end{equation}
The second part of the claim immediately follows from Proposition \ref{Aleks Relation}.
\end{proof}

We also state this lemma as a trivial consequence of the above proof for the later reference. 

\begin{lem}\label{bound on normal cone}
	Given $K\in\kno$, suppose $0<c<\frac {\mathfrak r_K}{\mathfrak R_k}$. Then for any $v\in\sn$, \begin{equation}
  \balpha_K(v)\subset v_{\arccos c}.
\end{equation}
\end{lem}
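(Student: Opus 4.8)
The plan is to unwind the definition of the outer parallel set and reduce the stated set inclusion to a single pointwise inner-product inequality, which is exactly the chain of estimates already carried out in the proof of Proposition \ref{necessary weak}. So my approach is essentially to isolate that computation and repackage it for the singleton $\{v\}$.

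First I would record that, by the definition of the outer parallel set applied to the singleton $\{v\}$, we have $v_{\arccos c} = \{w\in\sn : v\cdot w > \cos(\arccos c)\} = \{w\in\sn : v\cdot w > c\}$. Hence the inclusion $\balpha_K(v)\subset v_{\arccos c}$ is equivalent to the assertion that every normal $w\in\balpha_K(v)$ satisfies $v\cdot w > c$. It therefore suffices to fix an arbitrary $w\in\balpha_K(v)$ and bound $v\cdot w$ from below by $r_K/R_K$.

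Next I would exploit the geometric meaning of $w\in\balpha_K(v)$: such a $w$ is an outer unit normal at the boundary point $r_K(v)=\rho_K(v)\,v$, so the support hyperplane with normal $w$ passes through that point, giving $h_K(w)=w\cdot r_K(v)=\rho_K(v)\,(v\cdot w)$. The inradius bound yields $r_K\le h_K(w)$ and the circumradius bound yields $\rho_K(v)\le R_K$. Since $h_K(w)\ge r_K>0$ and $\rho_K(v)>0$, the factor $v\cdot w$ is forced to be positive, which is precisely what legitimizes the monotonicity step; chaining the three facts gives $r_K\le\rho_K(v)\,(v\cdot w)\le R_K\,(v\cdot w)$, whence $v\cdot w\ge r_K/R_K>c$. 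As $w$ was an arbitrary element of $\balpha_K(v)$, this establishes the inclusion.

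I do not expect a genuine obstacle here, since the statement is explicitly flagged as a consequence of the computation in Proposition \ref{necessary weak}. The only point requiring a moment of care is the direction of the inequality in the final monotonicity step: replacing $\rho_K(v)$ by the larger quantity $R_K$ preserves the inequality only because $v\cdot w>0$, and this positivity must be extracted first from $h_K(w)\ge r_K>0$ rather than assumed at the outset.
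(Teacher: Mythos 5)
Your proof is correct and follows essentially the same route as the paper, which derives this lemma from the chain $r_K \le h_K(w) = \rho_K(v)\,(v\cdot w) \le R_K\,(v\cdot w)$ inside the proof of Proposition \ref{necessary weak}. Your explicit extraction of the positivity of $v\cdot w$ from $h_K(w)\ge r_K>0$ before invoking $\rho_K(v)\le R_K$ is a small point the paper leaves implicit, but it is the same argument.
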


Note that for discrete measures, the weak Aleksandrov relation actually implies the above conclusion since every compact spherically convex set is closed:

\begin{prop}\label{uniform constant}
Suppose the discrete measures $\mu$ and $\lambda$ satisfy the weak Aleksandrov relation. Then there exists a uniform $\alpha>0$ such that for each closed set $\omega\subset\sn$: 
\begin{equation}
\mu(\omega)\leq\lambda(\omega_{\frac\pi2-\alpha}).
\end{equation}

\end{prop}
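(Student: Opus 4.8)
The plan is to exploit discreteness twice: once to reduce the claim for arbitrary closed sets to a finite family of point configurations, and once to manufacture a uniform angular gap that converts the non-strict $\frac\pi2$-bound into a strict margin. First I would reduce to finite sets. Since $\mu=\sum_i\mu_i\delta_{v_i}$, for any closed $\omega$ we have $\mu(\omega)=\mu(S)$ where $S:=\{v_i:v_i\in\omega\}$; moreover $S\subseteq\omega$, and monotonicity of the outer parallel sets (immediate from their definition) gives $S_{\frac\pi2-\alpha}\subseteq\omega_{\frac\pi2-\alpha}$, hence $\lambda(S_{\frac\pi2-\alpha})\le\lambda(\omega_{\frac\pi2-\alpha})$. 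Therefore it suffices to produce a single $\alpha>0$, depending only on the fixed vectors, with $\mu(S)\le\lambda(S_{\frac\pi2-\alpha})$ for each of the finitely many subsets $S\subseteq\{v_1,\ldots,v_m\}$.

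Next I would produce the gap. Set $\delta:=\min\{v_i\cdot u_j: v_i\cdot u_j>0\}$, a minimum over a finite set which is positive; the set is nonempty because applying the weak Aleksandrov relation (Proposition \ref{Aleks Relation}) to the singleton $\{v_i\}$ forces some $u_j$ with $v_i\cdot u_j>0$ for every $i$. Choose any $\alpha\in(0,\frac\pi2)$ with $\sin\alpha<\delta$. The crux is that for every $S$ the sets $S_{\frac\pi2}$ and $S_{\frac\pi2-\alpha}$ contain exactly the same atoms of $\lambda$: an atom $u_j$ lies in either one precisely when some $v_i\in S$ satisfies $v_i\cdot u_j>\cos\frac\pi2=0$, respectively $v_i\cdot u_j>\cos(\frac\pi2-\alpha)=\sin\alpha$, and any positive value $v_i\cdot u_j$ is at least $\delta>\sin\alpha$. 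Consequently $\lambda(S_{\frac\pi2})=\lambda(S_{\frac\pi2-\alpha})$. This step, where discreteness is indispensable, is the main obstacle of the argument: it is exactly what upgrades the $\frac\pi2$ inequality to a strict angular margin, and it would fail for measures whose mass accumulates toward inner product $0$.

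Finally I would treat each $S$. If $S$ lies in a closed hemisphere, set $C:=\bla S\bra$, which is compact and spherically convex. A short computation shows $C^*=S^*$, since the polar set depends only on $\mathrm{cone}\,S=\mathrm{cone}\,C$, and hence $C_{\frac\pi2}=\sn\setminus C^*=\sn\setminus S^*=S_{\frac\pi2}$ (the identity $\sn\setminus\omega^*=\omega_{\frac\pi2}$ holding for any set in a closed hemisphere by the same direct computation). Applying Proposition \ref{Aleks Relation} to $C$ and using $S\subseteq C$ then gives $\mu(S)\le\mu(C)\le\lambda(C_{\frac\pi2})=\lambda(S_{\frac\pi2})=\lambda(S_{\frac\pi2-\alpha})$. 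If instead $S$ lies in no closed hemisphere, then every direction, in particular every $u_j$, has a positive inner product with some $v_i\in S$, so by the gap every atom of $\lambda$ lies in $S_{\frac\pi2-\alpha}$, whence $\lambda(S_{\frac\pi2-\alpha})=\lambda(\sn)=\mu(\sn)\ge\mu(S)$. In both cases the finite inequality $\mu(S)\le\lambda(S_{\frac\pi2-\alpha})$ holds, which by the first paragraph completes the proof.
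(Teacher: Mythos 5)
Your proof is correct and takes essentially the same route as the paper's: reduce to the finitely many subsets of $\{v_1,\ldots,v_m\}$, use discreteness of $\lambda$ to get a uniform angular gap (your $\delta$ with $\sin\alpha<\delta$ plays exactly the role of the paper's $\alpha=\min_i\alpha_i$), treat sets in a closed hemisphere via the spherical convex hull together with $\bla S\bra_{\frac\pi2}=S_{\frac\pi2}$ (which you obtain through polarity, $\bla S\bra^*=S^*$, and the paper obtains directly from positive combinations), and treat the remaining case by comparing total masses. There are no gaps; the differences are only cosmetic.
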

\begin{proof}
First we claim that for each closed set $\omega\subset\sn$ contained in a closed hemisphere, $\omega_{\frac \pi 2}={\bla \omega \bra}_{\frac \pi 2}$. By set inclusion, $\omega_{\frac \pi 2}\subset{\bla \omega \bra}_{\frac \pi 2}$. For the opposite direction, take any $v\in {\bla \omega \bra}_{\frac \pi 2}$. Then for some $x\in\bla \omega \bra$, we have that $xv>0$. By the definition of convex hull, we can write $x$ as some convex combination of finite number of vectors $y_j\in\text{cone}(\omega)$. By the definition of the cone, each $y_j$ is a positive scaling of some vector  $z_j\in \omega$. Hence, $x$ is a linear combination of some vectors $z_j$ with positive coefficients. Therefore, since $xv>0$, we have that for some $z_j$, $z_jv>0$, and thus $v\in \omega_{\frac \pi 2}$. So, $\omega_{\frac \pi 2}\supset{\bla \omega \bra}_{\frac \pi 2}$. And hence, $\omega_{\frac \pi 2}={\bla \omega \bra}_{\frac \pi 2}$.

Let $\mathcal A$ be a collection of all possible indices $I\in\{1\ldots m\}$ such that $\{v_i\}_{i\in I}$ are contained in closed hemisphere. Given  $I\in\mathcal A$, define $\omega^I$ as $\cup_{i\in I}v_i$. Then by Proposition \ref{Aleks Relation} and from the previous conclusion, we obtain: 
\begin{equation}
  \mu(\omega^I)\leq\mu(\bla\omega^I\bra)\leq\lambda({\bla\omega^I\bra}_{\frac \pi 2})=\lambda({\omega^I}_{\frac \pi 2}).
\end{equation}
Since $\lambda$ is a discrete measure and $(v_i)_{\frac \pi 2}$ is an open set, for any $v_i$, where $i\in\{1\ldots m\}$, there exists an $\alpha_i$ such that $\lambda((v_i)_{\frac \pi 2 - \alpha_i})=\lambda((v_i)_{\frac \pi 2})$. Let $\alpha=\min_{i}\alpha_i$. Therefore, by the definition of outer parallel set:

\begin{equation}\label{3.11}
  \lambda({\omega^I}_{\frac \pi 2}\setminus{\omega^I}_{\frac \pi 2 -\alpha})=\lambda(\bigcup_{i\in I}(v_i)_{\frac \pi 2}\setminus \bigcup_{i\in I}(v_i)_{\frac \pi 2-\alpha})\leq\lambda(\bigcup_{i\in I}((v_i)_{\frac \pi 2}\setminus(v_i)_{\frac \pi 2 - \alpha_i}))=0,
\end{equation}
where the middle inequality follows from the set inclusion. Combining the last two equations, we obtain that for any $I\in \mathcal A$,
\begin{equation}
  \mu(\omega^I)\leq \lambda(\omega^I_{\frac \pi 2 -\alpha}).
\end{equation}

Suppose now we are given an index set $I\in\{1\ldots m\}$ such that $\{v_i\}_{i\in I}$ are not contained in a closed hemisphere. Then $\mu(\omega^I)\leq\mu(\sn)=\lambda(\sn)=\lambda(\omega^I_{\frac \pi 2})$. And similarly to \eqref{3.11}, we obtain that $\lambda(\omega^I_{\frac \pi 2})=\lambda(\omega^I_{\frac \pi 2-\alpha})$. Therefore, combining with the previous claim, we obtain that:
\begin{equation}
  \mu(\omega^I)\leq \lambda(\omega^I_{\frac \pi 2 -\alpha})
\end{equation}
for any index set $I\in\{1\ldots m\}$.

Now given any closed set $\omega\subset\sn$, let $\omega^I=\omega\cap\{v_1\ldots v_m\}$. Then from the previous inequality,
\begin{equation}
  \mu(\omega)=\mu(\omega^I)\leq\lambda(\omega^I_{\frac \pi 2 - \alpha})\leq \lambda(\omega_{\frac \pi 2 -\alpha}),
\end{equation}
where the last inequality follows by the set inclusion.
\end{proof}

\section{The Assignment Functional} \label{Section Assignment}

Recall that to the measure $\mu$ not concentrated on a closed hemisphere, we associate the set of polytopes $\po_\mu\subset\kno$ with vertices at the radial directions $v_i$:
\begin{equation}
	\po_\mu =\{\text{conv}\{\beta_iv_i\mid 1 \leq i \leq m\}\mid (\beta_1,\ldots,\beta_m)\in \mathbb R^m_{>0}\}.
\end{equation}
Recall, that each $P\in \po_\mu,$ can also be written in the following way:
\begin{equation}
  P=(\bigcap_{i=1}^mH^-(\alpha_i,v_i))^* ,
\end{equation}
where $\alpha_i>0$.

In the next proposition we show that it suffices to concentrate on the polytopes when considering the Discrete Gauss Image Problem. 

\begin{prop}\label{exist polytope}
Given a discrete measure $\mu$ not concentrated on a closed hemisphere and a discrete measure $\lambda$, suppose there exists a body $K\in\kno$ satisfying $\mu=\lambda(K,\cdot)$. Define $P\in\po_\mu$ to be a polytope with vertices $r_K(v_i)$,
\begin{equation}
\begin{split}
  P&=\text{\normalfont conv}\{r_K(v_i)\mid 1 \leq i \leq m\},\\
  P&=(\bigcap_{i=1}^mH^-(1/r_K(v_i),v_i))^*.
\end{split}
\end{equation}
Then $\mu=\lambda(P,\cdot)$, and for each $j$, $u_j\in \interior{\balpha_K(v_{f(j)})}$, for some function $f\colon\{1\ldots k\}\rightarrow \{1\ldots m\}$.
	\end{prop}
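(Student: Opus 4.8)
The plan is to realize $P$ concretely as the inscribed polytope $P=\mathrm{conv}\{r_K(v_1),\dots,r_K(v_m)\}$ --- the polar description in the statement is exactly the convex hull of the radial points $r_K(v_i)=\rho_K(v_i)v_i$ --- and then to compare the normal cones of $P$ and $K$. First I would record three elementary facts. Since each $r_K(v_i)\in\partial K$ and $K$ is convex, $P\subseteq K$; since $\mu$ is not concentrated on a closed hemisphere the $v_i$ positively span $\rn$, so $P\in\po_\mu\subset\kno$. Moreover $r_K(v_i)\in P\subseteq K$ forces $\rho_K(v_i)\le\rho_P(v_i)\le\rho_K(v_i)$, hence $r_P(v_i)=r_K(v_i)$ and $\balpha_P(v_i)=N(P,r_K(v_i))$. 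Finally, because $P\subseteq K$ and the two bodies share the boundary point $r_K(v_i)$, every supporting hyperplane of $K$ there also supports $P$, so $\balpha_K(v_i)=N(K,r_K(v_i))\subseteq N(P,r_K(v_i))=\balpha_P(v_i)$; monotonicity of the spherical interior under inclusion then gives $\interior{\balpha_K(v_i)}\subseteq\interior{\balpha_P(v_i)}$.

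The heart of the argument, and the step I expect to be the main obstacle, is to extract the assignment function from the hypothesis $\mu=\lambda(K,\cdot)$. Fix $u_j$ and let $G_j=K\cap H_K(u_j)$ be its support set, a face of $K$ not containing the origin. For every $x\in G_j$ with radial direction $w=x/|x|\in\sn$ one has $u_j\in N(K,x)=N(K,r_K(w))=\balpha_K(w)$, hence $\mu(\{w\})=\lambda(\balpha_K(w))\ge\lambda_j>0$. As $\mu$ is discrete it has only finitely many atoms, so $G_j$ can project radially onto only finitely many directions; since a positive-dimensional convex subset of $\partial K$ always projects onto infinitely many directions (each ray from the origin meets $\partial K$ once), $G_j$ must be a single point. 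Thus $G_j=\{r_K(v_i)\}$ for a unique $i$, which I name $f(j)$, and because the support set is a single point we get $u_j\in\interior{N(K,r_K(v_{f(j)}))}=\interior{\balpha_K(v_{f(j)})}$. This proves the second assertion and defines $f\colon\{1\dots k\}\to\{1\dots m\}$. I would also note that $f$ is surjective: $\mu_i=\lambda(\balpha_K(v_i))=\sum_{j\in f^{-1}(i)}\lambda_j$ together with $\mu_i>0$ forces $f^{-1}(i)\ne\emps$, so every $r_K(v_i)$ carries some $u_j$ in the interior of its cone and is in particular a vertex of $P$.

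It then remains to verify $\mu=\lambda(P,\cdot)$. By the first paragraph each $u_j$ lies in $\interior{\balpha_P(v_{f(j)})}=\interior{N(P,r_K(v_{f(j)}))}$, so the support set $P\cap H_P(u_j)$ is the single vertex $r_K(v_{f(j)})$; in particular $u_j$ is an outer normal of $P$ at no other vertex, and since distinct vertices lie on distinct rays, $u_j\in N(P,r_P(v))$ holds exactly when $v=v_{f(j)}$. Consequently, for any Borel $\omega\subset\sn$, we have $u_j\in\balpha_P(\omega)$ iff $v_{f(j)}\in\omega$, whence
\[
\lambda(P,\omega)=\lambda(\balpha_P(\omega))=\sum_{j:\,v_{f(j)}\in\omega}\lambda_j=\sum_{i:\,v_i\in\omega}\ \sum_{j\in f^{-1}(i)}\lambda_j=\sum_{i:\,v_i\in\omega}\mu_i=\mu(\omega).
\]
This yields $\mu=\lambda(P,\cdot)$ and completes the proof. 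The only delicate points are the finiteness argument ruling out positive-dimensional support sets --- which is exactly where the discreteness of $\mu$ is used --- and the monotonicity of the spherical interior under cone inclusion; everything else is bookkeeping.
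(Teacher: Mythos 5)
Your overall route is essentially the paper's: inscribe the polytope $P$ with vertices $r_K(v_i)$, use finiteness of $\mu$ to show each $u_j$ is attached to exactly one direction $v_i$, then verify the interior condition and compute $\lambda(P,\cdot)$. Your finiteness argument (a positive-dimensional face of $K$ would project radially onto uncountably many directions, each carrying an atom of $\mu$) is a correct repackaging of the paper's spherical-convex-hull argument, and your final paragraph deriving $\mu=\lambda(P,\cdot)$ correctly fills in a step the paper leaves implicit.

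There is, however, one genuine error, and it sits at the crux. From ``$G_j=K\cap H_K(u_j)$ is a single point'' you conclude $u_j\in \interior{N(K,r_K(v_{f(j)}))}=\interior{\balpha_K(v_{f(j)})}$. That implication is false for a general convex body $K$: take $K$ the Euclidean unit ball in $\mathbb R^2$, let $v_1,v_2,v_3$ be the vertices of an inscribed equilateral triangle, and set $u_j=v_j$, $\mu=\lambda=\sum_j\delta_{v_j}$. Then $\mu=\lambda(K,\cdot)$ and every support set of $K$ is a singleton, yet $\balpha_K(v_j)=\{v_j\}$ has empty interior in $\sn$, so $u_j\notin\interior{\balpha_K(v_j)}$. (This example also shows that the proposition as printed, with $\balpha_K$, contains a typo; the paper's own proof establishes the interior condition only for $\balpha_P$, which is what is used later in the paper.) The repair is short and restores your third paragraph verbatim: since $r_K(v_{f(j)})\in P\subseteq K$, you get $h_P(u_j)=h_K(u_j)$, hence $P\cap H_P(u_j)\subseteq \bigl(K\cap H_K(u_j)\bigr)\cap P=\{r_P(v_{f(j)})\}$, so the support set of $u_j$ with respect to the \emph{polytope} $P$ is a single vertex; and for a polytope (unlike a general body) a singleton vertex support set does imply that $u_j$ lies in the interior of the normal cone at that vertex, because vertex normal cones of an $n$-dimensional polytope are full-dimensional and their boundaries consist exactly of normals whose support sets are faces of dimension at least one. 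As written, though, your argument applies the polytope-only fact to $K$ itself, which is precisely the trap the paper's proof avoids by passing to $P$ before checking the interior condition.
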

	
	\begin{proof}
		
		First we claim that for any $i_1\neq i_2$, where $i_1,i_2\in\{1,\ldots, k\}$, the set ${\balpha_K(v_{i_1})}\cap{\balpha_K(v_{i_2})}$ does not contain any vectors $\{u_1\ldots u_k\}$. We proceed by contradiction and suppose the claim is not true. Let some $u_j\in {\balpha_K(v_{i_1})}\cap{\balpha_K(v_{i_2})}$. First  note that $v_{i_1}\neq - v_{i_2}$, as for any $v\in\sn$:
	\begin{equation}
   \balpha_K(v)\subset v_{\frac \pi 2},
\end{equation}
from Lemma \ref{bound on normal cone}. And this would mean that $\balpha_K(v)\cap \balpha_K(-v)=0$. Note that the set of boundary points of K for which $u_j$ is normal, \begin{equation}
  H_K(u_j)\cap K
\end{equation}
 is a convex set. Hence, 
 \begin{equation}
  \text{cone}\, (H_K(u_j)\cap K) = \{tx:\text{$t\ge 0$ and $x\in H_K(u_j)\cap K$}\}
\end{equation}
is a convex set. So $r_K^{-1}(H_K(u_j)\cap K)$ is a spherical convex set, as $\text{cone}\, (H_K(u_j)\cap K)\neq \rn$. Since $v_{i_1},v_{i_2}\in r_K^{-1}(H_K(u_j)\cap K)$, we obtain $\bla\{v_{i_1},v_{i_2}\}\bra\subset r_K^{-1}(H_K(u_j)\cap K)$. Therefore, for any $v\in \bla\{v_{i_1},v_{i_2}\}\bra$, we have $u_j\in\balpha_K(v)$. Thus $\lambda(K,v)\geq 1$ for uncountably many $v\in \bla\{v_{i_1},v_{i_2}\}\bra$, and therefore, $\lambda(K,\cdot)$ can't be a finite measure. We arrive at a contradiction.
		
		Therefore, given $j$, we can properly define a function $f$ by $u_j\in {\balpha_K(v_{f(j)})}$. Since for any $i$, $\balpha_K(v_i)\subset\balpha_P(v_i)$, we obtain that $u_j\in {\balpha_P(v_{f(j)})}$. We now check that the interior assumption is satisfied. That is, $u_j\in \interior{\balpha_P(v_{f(j)})}$, which will finish the proof. Suppose it is not. Pick some $u_j\in \partial{\balpha_P(v_{f(j)})}$. Since $P$ is a polytope with vertex $r_P(v_{f(j)})$, there exist some other $v_i$ for $i\neq f(j)$, such that $u_j\in\partial{\balpha_P(v_i)}$. Now, for the support function $h_P$, we obtain \begin{equation}
  h_P(u_j)=u_jr_P(v_i)=u_jr_P(v_{f(j)}).
\end{equation}
Since we showed that $u_j\not\in  {\balpha_K(v_{i_1})}\cap{\balpha_K(v_{i_2})}$ for any $i_1\neq i_2$, we have that $u_j\notin \balpha_K(v_{i})$. Therefore, \begin{equation}
  u_jr_P(v_i)=u_jr_K(v_i)<h_K(u_i),
\end{equation}
where we additionally used that $r_K(v_i)=r_P(v_i)$. However, since $u_j\in\balpha_K(v_{f(j)})$, we have \begin{equation}
  u_jr_P(v_{f(i)})=u_jr_K(v_{f(i)})=h_K(u_i).
\end{equation}
We get the contradiction from the combination of the last three equations.

	\end{proof}

We are now ready to properly introduce the assignment functions we first mentioned in subsection \ref{necc and suff disuss}.

\begin{defi}\label{assign} Given discrete measures $\lambda$ and $\mu$, we define \textit{the set of assignment functions}: 
\begin{equation}
  \mathbb{F_{\mu,\lambda}}:=\Big\{f\colon\{1\ldots k\}\rightarrow \{1\ldots m\} \mid \sum\limits_{j\in f^{-1}(i)} \lambda_j = \mu_i\big \}.
\end{equation}
A function $f\in\mathbb{F_{\mu,\lambda}}$ is callend \textit{an assignment function} with respect to $\mu$ and $\lambda$. If, for $f\in\mathbb{F_{\mu,\lambda}}$, there exists a body $K\in\kno$ solving the Gauss Image problem for measures $\mu$ and $\lambda$, and $u_j\in \interior{\balpha_K(v_{f(j)})}$, then we call $f$ \textit{a solution function}. We also say that $f$, or $f_K$, is an assignment function with respect to the body $K$. Sometimes we write $\mathbb F$ instead of $\mathbb F_{\mu,\lambda}$. Finally, we define the set of the \textit{proper assignment functions} as a subset of the above:
\begin{equation}
  \mathbb{F}_{\mu,\lambda,p}:=\{f\in\mathbb{F} \mid \forall j, u_jv_{f(j)}>0\}.
\end{equation}
Sometimes we write $\mathbb{F}_p$ instead of $\mathbb{F}_{\mu,\lambda,p}$.
\end{defi} 

\begin{rmk}	
Note that if $f\in\mathbb{F}\setminus\mathbb{F}_p$, then $f$ cannot be a solution function. This  follows from the definition and Lemma \ref{bound on normal cone}.
\end{rmk}

Guided by the preceding remark, we claim that if $\mu$ and $\lambda$ are weak Aleksandrov related, then $\mathbb{F}_p\neq \emps$. There are several ways to see this. First, this would be a consequence of the proof of the main Theorem \ref{main}. Alternatively, one could prove the statement by induction. Here, we present a proof for equal-weight $\mu$ and $\lambda$, which provides an interesting connection to graph theory and gives another justification for the weak Aleksandrov relation.

Suppose we are given two discrete equal-weight measures $\mu$ and $\lambda$. Consider a bipartite graph $G(\mu,\lambda)$ formed by two sets of vertices $\{u_j\}$ and $\{v_i\}$ where we draw an edge from $v_i$ to $u_j$ if and only if $v_iu_j>0$. For $V\subset\{v_1\ldots v_m\}$ define $N_G(V)$ as the set of  adjacent verticies to $V$. Hall's marriage theorem states that there exists a perfect matching between verticies of $G(\mu,\lambda)$ if and only if for each subset of vertices $V\subset\{v_1\ldots v_m\}$: 
\begin{equation}
  |V|\leq|N_G(V)|.
\end{equation}
Notice that this is exactly the same as weak Aleksandrov condition:  

\begin{lem}\label{Hall}
Suppose we are given two discrete equal-weight measures $\mu$ and $\lambda$, where $\lambda(\mathbb{S}^{n-1})=\mu(\mathbb{S}^{n-1})$. Then Hall's marriage condition is equivalent to the weak Aleksandrov condition. In particular, if two measures satisfy weak Aleksandrov condition, then $\mathbb{F}_p$ is nonempty. \end{lem}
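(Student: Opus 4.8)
The plan is to translate the two combinatorial quantities appearing in Hall's condition into the measure language of Proposition \ref{Aleks Relation}, after which both implications become short. For a subset $V\subset\{v_1,\ldots,v_m\}$ write $\omega_V=\bigcup_{i\in V}\{v_i\}\subset\sn$, a finite and hence closed set. Since $\mu$ is equal-weight and the $v_i$ are distinct, $|V|=\mu(\omega_V)$. By the definition of the outer parallel set with $\alpha=\frac\pi2$ one has $(v_i)_{\frac\pi2}=\{w\in\sn:v_iw>0\}$, so the neighborhood of $V$ is exactly
\begin{equation}
N_G(V)=\Big\{u_j: u_j\in\bigcup_{i\in V}(v_i)_{\frac\pi2}\Big\}=\{u_1,\ldots,u_k\}\cap(\omega_V)_{\frac\pi2};
\end{equation}
as $\lambda$ is equal-weight with distinct $u_j$, this gives $|N_G(V)|=\lambda\big((\omega_V)_{\frac\pi2}\big)$. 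Thus Hall's condition $|V|\le|N_G(V)|$ for every $V$ is precisely $\mu(\omega_V)\le\lambda\big((\omega_V)_{\frac\pi2}\big)$ for every $V\subset\{v_1,\ldots,v_m\}$.

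For the direction that the weak Aleksandrov condition forces Hall's condition, I would invoke Proposition \ref{uniform constant} directly: since $\omega_V$ is closed, it furnishes a uniform $\alpha>0$ with $\mu(\omega_V)\le\lambda\big((\omega_V)_{\frac\pi2-\alpha}\big)$, and the inclusion $(\omega_V)_{\frac\pi2-\alpha}\subset(\omega_V)_{\frac\pi2}$ then yields the desired inequality for every $V$.

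For the converse, let $\omega\subset\sn$ be compact and spherically convex and set $V=\{i:v_i\in\omega\}$, so that $\mu(\omega)=\mu(\omega_V)=|V|$. Hall's condition gives $|V|\le\lambda\big((\omega_V)_{\frac\pi2}\big)$, and since $\omega_V\subset\omega$ forces $(\omega_V)_{\frac\pi2}\subset\omega_{\frac\pi2}$, monotonicity of $\lambda$ produces $\mu(\omega)\le\lambda(\omega_{\frac\pi2})$. Together with the standing hypothesis $\mu(\sn)=\lambda(\sn)$, Proposition \ref{Aleks Relation} identifies this with the weak Aleksandrov relation, completing the equivalence. Finally, for the last assertion, assume the weak Aleksandrov condition, so Hall's condition holds. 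Because $\mu$ and $\lambda$ are equal-weight with $\lambda(\sn)=\mu(\sn)$, the two vertex classes have equal cardinality $m=k$, so Hall's marriage theorem produces a matching saturating $\{v_i\}$, which is therefore perfect, i.e. a bijection $f\colon\{1,\ldots,k\}\to\{1,\ldots,m\}$ with $u_jv_{f(j)}>0$ for every $j$. Being a bijection, $f$ satisfies $\sum_{j\in f^{-1}(i)}\lambda_j=1=\mu_i$, so $f\in\mathbb F$, and the positivity gives $f\in\mathbb F_p$; hence $\mathbb F_p\neq\emps$.

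I expect the only genuinely delicate point to be the bookkeeping in the first paragraph, specifically verifying that $N_G(V)$ coincides with $\{u_j\}\cap(\omega_V)_{\frac\pi2}$ and handling the case where $\{v_i\}_{i\in V}$ lies in no closed hemisphere; in that case $(\omega_V)_{\frac\pi2}=\sn$, so the inequality is automatic, and routing the closed-set estimate through Proposition \ref{uniform constant} is precisely what lets us avoid passing to spherical convex hulls by hand and keeps the argument clean.
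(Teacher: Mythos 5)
Your proof is correct and follows essentially the same route as the paper's: the identification of $N_G(V)$ with $\{u_1,\ldots,u_k\}\cap(\omega_V)_{\frac\pi2}$, the use of Proposition \ref{uniform constant} on the closed set $\omega_V$ for the weak-Aleksandrov-implies-Hall direction, Proposition \ref{Aleks Relation} for the converse, and Hall's marriage theorem to extract a perfect matching giving $\mathbb F_p\neq\emps$. Your write-up is somewhat more explicit about the bookkeeping (the equal-cardinality reduction $m=k$ and the hemisphere edge case), but the underlying argument is identical.
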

\begin{proof}
	Suppose for each subset of vertices $V\subset\{v_1\ldots v_m\}$, \begin{equation}
  |V|\leq|N_G(V)|.
\end{equation} Then for any $\omega$ a compact spherically convex set, \begin{equation}\begin{split}
  \mu(\omega)=&|\omega\cap\{v_1\ldots v_m\}|\leq|N_G(\omega\cap\{v_1\ldots v_m\})|\leq \\ &|\omega_{\frac\pi2}\cap\{u_1\ldots u_m\}|=\lambda(\omega_{\frac\pi 2}). 
  \end{split}
\end{equation}
Thus, by Proposition \ref{Aleks Relation}, $\mu$ and $\lambda$ are weak Aleksandrov related. For another direction, suppose the weak Aleksandrov condition holds. Consider any $V\subset\{v_1\ldots v_m\}$. Then $V$ is a closed set, and by Proposition \ref{uniform constant}, $\mu(V)\leq\lambda(V_{\frac \pi 2 - \alpha})$ for some constant $\alpha$. Since $\lambda(V_{\frac \pi 2 - \alpha})\leq |N_G(V)|$, we obtain that $|V|\leq |N_G(V)|$.

We just proved the equivalence of the two conditions. Given the weak Aleksandrov assumption, Hall's marriage Theorem tells us that the graph formed from its vertices has a matching. That is, there exists a $\sigma$ permutation such that $v_ju_{\sigma(j)}>0$ for all $j$. Thus, $\mathbb{F}_p\neq\emps$.
\end{proof}
\begin{rmk}
In some sense, all possible Aleksandrov relations, even for the non-discrete problem, are related to Hall's Marriage Theorem. It is interesting to note that the theorems related to the Discrete Gauss Image problem provided in the next sections actually prove Hall's marriage theorem for specific bipartite graphs $G(\mu,\lambda)$. If we show the existence of a convex body $K\in\kno$ such that $\mu=\lambda(K,\cdot)$, then the assignment function with respect to this body gives a perfect matching. In some sense, the results of the next section present a functional approach to Hall's marriage theorem. 
\end{rmk}

\begin{defi}Given discrete measures $\lambda$ and $\mu$, we define \textit{the assignment functional} $A(\cdot):\mathbb{F_{\mu,\lambda}}\rightarrow \mathbb R\cup \{-\infty\}$ by 
\begin{equation}
   A(f)=\sum_{j=1}^k\log{u_j v_{f(j)}} 
\end{equation}
for $f\in \mathbb{F_{\mu,\lambda}}$, where $\log(x) = -\infty$ for $x \leq 0$. 
\end{defi}

Note that $f\in\mathbb{F}_p$ if and only if $A(f)>-\infty$. The $-\infty$ in the definition does not create a problem since $u_jv_{f(j)}\leq 1$ for any unit vectors. That is, we never sum $-\infty$ with $+\infty$. In the next section, we will show that as long as the assignment functional is uniquely maximized on the set $\mathbb{F}_{\mu,\lambda}$, and $\mathbb F_p\neq\emps$, there exists a polytope $P\in\po_\mu$ satisfying $\mu=\lambda(P,\cdot)$. On the other hand, if the maximizer is non-unique, then we show that the solution does not exist.

\begin{com}\label{Remark for general problem} 

We would like to comment on the remark made in the Introduction, see Subsection \ref{necc and suff disuss}, about restricting the problem to an equal-weight measures $\lambda$, where we have claimed that equal-weight discrete measure $\lambda$ preserves all variety of the geometric problem, and any non equal-weight discrete measure $\lambda$ can be reduced to a series of equal weight problems. 

Suppose for a given discrete measures $\mu$ and $\lambda$, we have $\mathbb{F_{\mu,\lambda}}\neq\emps$. Let $f\in\mathbb{F}_{\mu,\lambda}$. Then we can define new discrete measures $\lambda'$ and $\mu'$ to be 
\begin{equation}\label{Reduction of Problem}
\begin{split}
  \lambda'&:=\sum\limits_{j=1}^k \delta_{u_j} \\ \mu'&=\sum\limits_{i=1}^m\mu'_i\delta_{v_i}:=\sum\limits_{i=1}^m|f^{-1}(i)|\delta_{v_i},
  \end{split}
  \end{equation}
where $|f^{-1}(i)|$ denotes the number of elements mapping to $i$. It is easy to see that $f\in\mathbb{F}_{\mu',\lambda'}$. If there exists a $K\in\kno$ solving $\mu'=\lambda'(K,\cdot)$ such that $u_j\in \interior{\balpha_K(v_{f(j)})}$, then $K$ automatically solves the original problem $\mu=\lambda(K,\cdot)$ and vice versa. Therefore, $f$ is a solution function for discrete $\mu,\lambda$ problem if and only if $f$ is a solution function for discrete $\mu',\lambda'$ problem where $\lambda'$ is equal-weight. Therefore, any non equal-weight $\lambda$ problem can be residues to several equal-weight problems (one for each $f\in\mathbb{F}_{\mu,\lambda}$), which shows that equal-weight problem takes into considerations all possible geometric structures.  
\end{com}

\section{Existence and Uniqueness}\label{Main section}

In this section, we will prove Theorem \ref{main}, which presents an equivalence between the Discrete Gauss Image problem and The Assignment Problem. We note that for convenience, we will assume that $\mu$ is not concentrated on a closed hemisphere. It seems that nothing prevents similar results to hold without this restriction. Roughly, this happens because if $\mu$ is not concentrated on a closed hemisphere, then by considering some body $K$, one can start to decrease this body in radial directions $\rho_K(u)$ where $uv_i\leq 0$ increasing each $\balpha_K(v_i)$. However, we do not proceed in this direction, since by assuming that $\mu$ is not concentrated on a closed hemisphere, we can speak of a class of polytopes $P_{\mu}$ that has a natural resemblance with the discrete version of classical Minkowski problem.

\begin{thm}\label{main}
Let $\lambda$ be a discrete equal-weight measure and $\mu$ be a discrete measure. Suppose they are weak Aleksandrov related and $\mu$ is not concentrated on a closed hemisphere. Then $\mathbb{F}_p$ is nonempty. Moreover, $f\in\mathbb{F}$ is a solution function if and only if it is the unique maximizer of the assignment functional. In other words,

\begin{itemize}
  \item The assignment functional, $A(\cdot)$, is maximized at exactly one $f\in\mathbb{F}$. For this $f$, there exists a polytope $P\in\po_\mu$ such that $\lambda(P,\cdot)=\mu$ and $u_j\in\interior{\balpha_P(v_{f(j)})}$. 
  \item Or $A(\cdot)$ is maximized by more than one $f\in\mathbb{F}$, in which case there is no convex body $K\in\kno$ such that $\lambda(K,\cdot)=\mu$.
\end{itemize}

\end{thm}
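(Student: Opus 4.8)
The plan is to anchor everything on the scale-invariant functional $\Phi(P,\mu,\lambda)=\int\log\rho_P\,d\mu+\int\log\rho_{P^*}\,d\lambda$ from \eqref{GIP functional} and to exploit that, on the class $\po_\mu$, it is a concave piecewise-linear function of the logarithmic radii $t_i:=\log\rho_P(v_i)$. First I would dispose of the claim $\mathbb{F}_p\neq\emps$, which is immediate from Lemma \ref{Hall} together with the weak Aleksandrov hypothesis, and recall Proposition \ref{exist polytope}, which lets me restrict throughout to polytopes $P\in\po_\mu$ with a well-defined assignment $f_P$. The computational cornerstone is the identity obtained by writing $\rho_{P^*}(u_j)=1/h_P(u_j)$ and using $h_P(u_j)=\rho_P(v_{f_P(j)})\,u_jv_{f_P(j)}$ whenever each $u_j$ lies in the interior of a single normal cone:
\begin{equation}
\Phi(P,\mu,\lambda)=\sum_{i=1}^m\bigl(\mu_i-|f_P^{-1}(i)|\bigr)\log\rho_P(v_i)-A(f_P).
\end{equation}
In particular $\Phi(P)=-A(f_P)$ exactly when $f_P\in\mathbb F$, and the gradient of $\Phi$ in the $t_i$ is $\mu_i-\lambda(P,\{v_i\})$, so the critical points of the concave $\Phi$ are precisely the solutions of $\mu=\lambda(P,\cdot)$.

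Next I would prove the necessity-and-uniqueness direction, the clean half. Suppose $f$ is a solution function, realised by a polytope $P$ with every $u_j$ strictly interior to the cone of $r_P(v_{f(j)})$. For any competitor $f'\in\mathbb F$, maximality of the support function gives $h_P(u_j)=\rho_P(v_{f(j)})\,u_jv_{f(j)}\ge\rho_P(v_{f'(j)})\,u_jv_{f'(j)}$, strictly whenever $f'(j)\neq f(j)$ by strict interiority. Taking logarithms, rearranging, and summing over $j$ yields
\begin{equation}
A(f)-A(f')\ \ge\ \sum_{i=1}^m\bigl(|f'^{-1}(i)|-|f^{-1}(i)|\bigr)\log\rho_P(v_i)=0,
\end{equation}
because $f,f'\in\mathbb F$ force $|f^{-1}(i)|=|f'^{-1}(i)|=\mu_i$. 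The inequality is strict unless $f'=f$, so $f$ is the \emph{unique} maximizer of $A$. Contrapositively, if $A$ has more than one maximizer then no solution function exists, whence by Proposition \ref{exist polytope} there is no $K\in\kno$ with $\mu=\lambda(K,\cdot)$; this is exactly the second bullet.

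For the existence direction I would show a unique maximizer $f^{*}$ of $A$ is a solution function. The heart is to produce a maximizer $P^{*}$ of $\Phi$ inside $\po_\mu$: here I would follow the route advertised in the introduction, approximating $\lambda$ by absolutely continuous $\lambda_\eps\to\lambda$ that remain weak Aleksandrov related to $\mu$, invoking the continuous existence result (Appendix) to get $K_\eps$ with $\mu=\lambda_\eps(K_\eps,\cdot)$, and passing to a subsequential limit $P^{*}\in\po_\mu$; the weak Aleksandrov relation (coercivity, via Hall) and the hypothesis that $\mu$ is not concentrated on a closed hemisphere keep the limit non-degenerate, with the origin interior and every $v_i$ a genuine vertex. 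Reading $t^{*}=(\log\rho_{P^{*}}(v_i))$ as the dual variables of the transportation linear program whose integral feasible points are the $f\in\mathbb F$ and whose objective is $A$, LP duality identifies $\max_{\mathbb F}A=-\Phi(P^{*})$, and complementary slackness says $f$ maximizes $A$ if and only if $u_j\in\balpha_{P^{*}}(v_{f(j)})$ for every $j$. Since $f^{*}$ is the \emph{unique} maximizer, each $u_j$ can lie in only one cone and hence must lie in its interior — a boundary placement, with $u_j$ normal to an edge, would let me swap the two incident vertices and manufacture a second maximizer. Thus $f_{P^{*}}=f^{*}\in\mathbb F$, every $u_j\in\interior{\balpha_{P^{*}}(v_{f^{*}(j)})}$, and $\mu=\lambda(P^{*},\cdot)$, so $f^{*}$ is a solution function.

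The step I expect to fight hardest with is the construction of the $\Phi$-maximizer $P^{*}$ and the control of the limiting assignment, i.e. the delicate interior condition $u_j\in\interior{\balpha_{P}(\cdot)}$ flagged in the introduction. Under only the weak (non-strict) Aleksandrov relation, $\Phi$ need not be coercive in the strict sense, and the approximating bodies $K_\eps$ can converge to a polytope on which some $u_j$ has smeared onto a cone boundary; the transportation polytope, together with a Birkhoff--von Neumann decomposition of the limiting (possibly fractional) transport plan, is the tool I would use to show that such smearing happens precisely when $A$ has several maximizers and is ruled out exactly by uniqueness. Making this dichotomy quantitative — that a unique $A$-maximizer forces strict interiority in the limit while ties force boundary incidence — is the crux on which both bullets of the theorem turn.
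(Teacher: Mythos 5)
Your necessity direction is correct and is actually cleaner than the paper's: the strict inequality $h_P(u_j)=\rho_P(v_{f(j)})\,u_j\cdot v_{f(j)}>\rho_P(v_{f'(j)})\,u_j\cdot v_{f'(j)}$ for $f'(j)\neq f(j)$, summed over $j$ and cancelled against the margin constraints $|f^{-1}(i)|=|f'^{-1}(i)|=\mu_i$, shows directly that a solution function is the \emph{unique} maximizer of $A$. The paper gets the same conclusion more laboriously, first showing a solution function is \emph{a} maximizer (Proposition \ref{uniq discrete}) and then ruling out ties by perturbing $u_1$ (via Proposition \ref{pertub}). Your setup for the existence direction (smoothing $\lambda_\eps$, the Appendix solutions $K_\eps$, a subsequential limit $P^*$, and the transportation LP) also matches the paper.

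The genuine gap is the final step of the existence direction: the claim that a boundary placement of some $u_j$ on $P^{*}$ ``would let me swap the two incident vertices and manufacture a second maximizer.'' The swap is not available. If $u_j\in\balpha_{P^*}(v_{f^*(j)})\cap\balpha_{P^*}(v_{i'})$ and you set $f'(j)=i'$, then $|f'^{-1}(i')|=\mu_{i'}+1$, so $f'\notin\mathbb F$; restoring the margins requires an alternating cycle of tight incidences $(i,j)$ (cells with $u_j\in\balpha_{P^*}(v_i)$), and no such cycle need exist. Structurally, with all margins equal to $1$, the tight pattern $\{(1,1),(2,1),(2,2)\}$ admits the identity as its unique feasible assignment even though $u_1$ is tight for two cones. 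Hence uniqueness of the $A$-maximizer does \emph{not} force strict interiority for the limit body, and the conclusion you draw ($\mu=\lambda(P^*,\cdot)$) can simply fail for $P^*$; the paper says exactly this in the remark after the first half of its proof (``vectors $u_j$ may lie on the boundary of $\balpha_{K}(v_i)$ in general, even if $\Phi(K,\mu,\lambda)=-A(g)$''). The paper's fix is to abandon the limit body: when the maximizer $g$ is unique, the limiting transport plan of Lemma \ref{sum2} is the vertex corresponding to $g$, so the mass fractions $\int_{\spt_{i,j,\eps}}\eps^{-1}d\bar\lambda$ converge to $0$ or $1$, and once more than half of the disk $\spt\phi_{\eps,j}$ lies inside the convex cone $\balpha_{K_\eps}(v_{g(j)})$, its center $u_j$ must be interior to that cone --- so $K_\eps$ itself is a solution for small $\eps$. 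Your route can be repaired (e.g.\ by strict complementarity in the LP, producing a \emph{different} dual-optimal radii vector whose tight cells are exactly the support of $c^{f^*}$, i.e.\ a perturbed polytope in $\po_\mu$), but some such step is indispensable; as written the crux fails. Two smaller points: the identification $\max_{\mathbb F}A=-\Phi(P^*)$ presupposes that $P^*$ maximizes $\Phi$, which you assert rather than prove (the paper proves it through Lemmas \ref{sum1}--\ref{f2} and Proposition \ref{eq5}); relatedly, your side remark that critical points of the concave $\Phi$ are ``precisely'' the solutions is false for discrete $\lambda$ --- the regular-polygon example of Section 7 is a $\Phi$-maximizer with no solution --- and it is this over-identification that feeds the flawed swap step. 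Finally, Lemma \ref{Hall} is proved only for equal-weight $\mu$, so quoting it for $\mathbb F_p\neq\emps$ with general integer weights $\mu_i$ needs the routine vertex-duplication extension, or the paper's route through Proposition \ref{eq5}.
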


Recall the functional $\Phi(K,\mu,\lambda)$ defined in \eqref{GIP functional}. We start with some preliminary propositions.

\begin{prop}\label{est}
	Let $\lambda$ and $\mu$ be discrete measures. Suppose they are weak Aleksandrov related. Suppose also that $\mu$ is not concentrated on a closed hemisphere. Then, for any assignment function $f\in\mathbb{F}$ and any $K\in\kno$, we have:
	\begin{equation}
		 \Phi(K,\mu,\lambda)\leq -A(f).
	\end{equation}
   
\end{prop}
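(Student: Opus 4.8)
The plan is to collapse $\Phi(K,\mu,\lambda)$ into a single sum indexed by $j$ using the defining constraint of $\mathbb{F}$, and then to bound each summand by an elementary support-function inequality. First I would rewrite $\Phi$ via polarity. Since the paper sets $h_{K^*}=1/\rho_K$, duality ($K^{**}=K$) gives $\rho_{K^*}=1/h_K$, so that
\[
\Phi(K,\mu,\lambda)=\sum_{i=1}^m\mu_i\log\rho_K(v_i)-\sum_{j=1}^k\lambda_j\log h_K(u_j).
\]
Next I would invoke the property $\mu_i=\sum_{j\in f^{-1}(i)}\lambda_j$ of an assignment function $f\in\mathbb{F}$ to rewrite the $\mu$-sum,
\[
\sum_{i=1}^m\mu_i\log\rho_K(v_i)=\sum_{i=1}^m\sum_{j\in f^{-1}(i)}\lambda_j\log\rho_K(v_{f(j)})=\sum_{j=1}^k\lambda_j\log\rho_K(v_{f(j)}),
\]
which merges the two terms into $\Phi(K,\mu,\lambda)=\sum_{j=1}^k\lambda_j\big(\log\rho_K(v_{f(j)})-\log h_K(u_j)\big)$.

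The heart of the argument is then a term-by-term geometric estimate. For each $j$, the point $r_K(v_{f(j)})=\rho_K(v_{f(j)})v_{f(j)}$ lies in $K$, so by the definition of the support function $h_K(u_j)\geq u_j\cdot r_K(v_{f(j)})=\rho_K(v_{f(j)})\,u_jv_{f(j)}$. Because $K\in\kno$ contains the origin in its interior, $\rho_K$ and $h_K$ are strictly positive, so every logarithm appearing in $\Phi$ is finite. If $u_jv_{f(j)}>0$, taking logarithms of this inequality yields $\log\rho_K(v_{f(j)})-\log h_K(u_j)\leq-\log(u_jv_{f(j)})$, and multiplying by $\lambda_j>0$ gives the summand bound. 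If $u_jv_{f(j)}\leq0$, then by the $\log$-convention $-\log(u_jv_{f(j)})=+\infty$ and the bound is automatic. Summing over $j$ produces $\Phi(K,\mu,\lambda)\leq-\sum_{j=1}^k\lambda_j\log(u_jv_{f(j)})$, which for the equal-weight measure $\lambda$ ($\lambda_j=1$) is exactly $-A(f)$.

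I expect no serious obstacle here: the estimate holds for every $f\in\mathbb{F}$ and every $K\in\kno$, and the real content is just the support-function inequality above. I would remark that the weak Aleksandrov relation and the non-hemisphere hypothesis are inherited from the ambient setting and are not actually invoked in this particular bound. The only point requiring genuine care, and hence the main (modest) difficulty, is bookkeeping with the $\log$-convention for non-positive $u_jv_{f(j)}$: one must confirm that the left-hand side $\Phi$ is always finite so that the chain of inequalities never attempts to add $+\infty$ to $-\infty$, which is guaranteed precisely by $K\in\kno$.
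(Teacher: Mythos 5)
Your proof is correct, and it takes a genuinely more direct route than the paper's. The paper first replaces $K$ by the polytope $P\in\po_\mu$ with vertices $r_P(v_i)=r_K(v_i)$, using $P\subset K$ to get $\Phi(K,\mu,\lambda)\leq\Phi(P,\mu,\lambda)$ (this is where the non-hemisphere hypothesis is needed, to guarantee $P\in\kno$), then computes $\Phi(P,\mu,\lambda)$ exactly via $\log\rho_{P^*}(u_j)=\min_{i}\log\frac{1}{\alpha_i v_iu_j}$ (invoking the weak Aleksandrov relation to see this minimum is finite), bounds the minimum by the single term $i=f(j)$, and finally cancels the $\alpha_i$'s using the assignment property. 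You fuse the paper's two inequalities, $h_K\geq h_P$ and $h_P(u_j)\geq \rho_K(v_{f(j)})\,v_{f(j)}u_j$, into the one support-function bound $h_K(u_j)\geq u_j\cdot r_K(v_{f(j)})$, never form $P$ at all, and consequently — as you correctly observe — need neither the weak Aleksandrov relation nor the non-concentration assumption: your argument proves the estimate for arbitrary discrete measures admitting an assignment function and arbitrary $K\in\kno$, which is a genuine (if modest) strengthening. What the paper's detour buys is the exact identity \eqref{discrete eq} for $\Phi(P,\mu,\lambda)$, which is reused in Proposition \ref{uniq discrete} to obtain the \emph{equality} $\Phi(P,\mu,\lambda)=-A(g)$ when $P$ is a solution with assignment $g$; your one-sided inequality does not by itself yield that equality case, so the paper's computation is not wasted effort in the larger scheme. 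Finally, note that your identification of $-\sum_j\lambda_j\log(u_jv_{f(j)})$ with $-A(f)$ requires $\lambda_j=1$, exactly as the paper's cancellation step does; you flag this explicitly, and it matches the paper's implicit equal-weight usage of the proposition.
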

\begin{proof}
	Given any $K\in\kno$, let $P\in\po_\mu$ be a convex polytope with vertices $r_P(v_i)=r_K(v_i)$. Then $P\subset K$, and hence $h_P\leq h_k$, which implies that $\int\log\rho_{K^*}d\lambda\leq\int\log\rho_{P^*}d\lambda$. At the same time, $\int\log\rho_Kd\mu=\int\log\rho_Pd\mu$ since we preserved the radial distance at the point masses of $\mu$. Hence, $\Phi(K,\mu,\lambda)\leq\Phi(P,\mu,\lambda)$. Recall that we can write $P$ as $P^*=\bigcap_{i=1}^mH^-(1/\alpha_i,v_i)$, where $\alpha_i=\rho_K(v_i)$. 
	
		Note that $\frac{1}{\alpha_iv_iu_j}$ is the distance from the center to the intersection between a ray in the $u_j$ direction starting at the center and the hyperplane $H(1/\alpha_i,v_i)$. From this, we obtain that \begin{equation}\label{eq5.1}
  \log\rho_{P^*}(u_j)=\min_{i=1}^m\log(\frac{1}{\alpha_iv_iu_j}),
\end{equation}
where we assume that $\log(\frac{1}{x})=\infty$ if $x<0$. This relates to the fact that a ray in the $u_j$ direction from the center never intersects the hyperplane $H(1/\alpha_i,v_i)$. Note that by Proposition \ref{Aleks Relation}, \begin{equation}
  1=\mu(\bla u_j\bra)\leq \lambda(\bla u_j\bra_{\frac \pi 2}).
\end{equation}
So, there always exists some $v_i$ such that $u_jv_i>0$, and the right hand-side of equation \eqref{eq5.1} is not $\infty$. Using \eqref{GIP functional}, we write: 
		
		\begin{equation}\label{discrete eq}
  \Phi(P,\mu,\lambda)=\sum_{i=1}^m\mu_i\log\alpha_i+\sum_{j=1}^k\lambda_j\min_{i=1}^m\log(\frac{1}{\alpha_iv_iu_j}).
\end{equation}
So in particular, again assuming $\log(\frac{1}{x})=\infty$ if $x<0$, given any assignment function $f$ we have:
	
	\begin{equation}\label{est 5.5}
  \Phi(P,\mu,\lambda)\leq\sum_{i=1}^m\mu_i\log\alpha_i+\sum_{j=1}^k\lambda_j\log(\frac{1}{\alpha_{f(j)}v_{f(j)}u_j}).
\end{equation}

By definition of the assignment function, \begin{equation}
  \sum\limits_{j\in f^{-1}(i)} \lambda_j = \mu_i.
\end{equation}
 Therefore, looking back at \eqref{est 5.5}, we obtain that $\alpha_i$ on the right side cancels out, which makes it equal to $-A(f)$. Combining the last inequality with $\Phi(K,\mu,\lambda)\leq\Phi(P,\mu,\lambda)$, we prove the claim.
\end{proof}

We easily obtain the following uniqueness result as a corollary of the above argument:

\begin{prop}[Uniqueness]\label{uniq discrete}
If $g\in\mathbb{F}_{\mu,\lambda}$ is a solution function where $\lambda$ is an equal-weight measure, and $\mu$ is not concentrated on a closed hemisphere, then $g$ is the maximizer of the assignment functional.	
\end{prop}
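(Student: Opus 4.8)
The plan is to leverage Proposition \ref{est}, which already supplies $\Phi(K,\mu,\lambda)\leq -A(f)$ for every $K\in\kno$ and every $f\in\mathbb F$, and to show that for the body witnessing that $g$ is a solution function this inequality becomes an \emph{equality} at $f=g$. First, since $g$ is a solution function, there is a $K\in\kno$ with $\mu=\lambda(K,\cdot)$ and $u_j\in\interior{\balpha_K(v_{g(j)})}$; by Proposition \ref{necessary weak} the discrete measures $\mu,\lambda$ are then automatically weak Aleksandrov related, so Proposition \ref{est} applies. Passing to the polytope $P\in\po_\mu$ with vertices $r_P(v_i)=r_K(v_i)$ via Proposition \ref{exist polytope}, the solution function $g$ is preserved, i.e.\ $u_j\in\interior{\balpha_P(v_{g(j)})}$, and $\Phi(K,\mu,\lambda)\leq\Phi(P,\mu,\lambda)$ exactly as in the proof of Proposition \ref{est}.

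Next I would compute $\Phi(P,\mu,\lambda)$ precisely. Writing $\alpha_i=\rho_P(v_i)$, formula \eqref{discrete eq} gives $\Phi(P,\mu,\lambda)=\sum_i\mu_i\log\alpha_i+\sum_j\lambda_j\min_i\log(\frac{1}{\alpha_iv_iu_j})$. The crucial point is that the \emph{interior} condition $u_j\in\interior{\balpha_P(v_{g(j)})}$ forces the minimum over $i$ to be attained exactly at $i=g(j)$: since $u_j$ lies in the interior of the normal cone at the vertex $r_P(v_{g(j)})$, the supporting hyperplane $H_P(u_j)$ meets $P$ only at that vertex, so $h_P(u_j)=u_j\cdot r_P(v_{g(j)})=\alpha_{g(j)}v_{g(j)}u_j$, and by polar duality $\log\rho_{P^*}(u_j)=-\log h_P(u_j)=\log(\frac{1}{\alpha_{g(j)}v_{g(j)}u_j})$. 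Hence the inequality \eqref{est 5.5} holds with equality for $f=g$.

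With equality in hand, the assignment constraint $\sum_{j\in g^{-1}(i)}\lambda_j=\mu_i$ together with $\lambda_j=1$ makes the $\log\alpha_i$ terms cancel, exactly as in Proposition \ref{est}, yielding $\Phi(P,\mu,\lambda)=-A(g)$. Applying Proposition \ref{est} once more with $K=P$ gives $-A(g)=\Phi(P,\mu,\lambda)\leq -A(f)$ for every $f\in\mathbb F$, so $A(g)\geq A(f)$ and $g$ maximizes the assignment functional.

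The only genuine obstacle is the equality step in the second paragraph: everything in Proposition \ref{est} is an inequality, and one must verify that the interior membership $u_j\in\interior{\balpha_P(v_{g(j)})}$ (not mere membership) is precisely what pins the minimizing index to $g(j)$ and promotes each inequality to an equality. Once this is established, the statement is a direct corollary of Proposition \ref{est}.
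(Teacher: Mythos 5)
Your proposal is correct and follows essentially the same route as the paper: pass to the polytope $P$ via Proposition \ref{exist polytope}, use the interior condition to evaluate $\Phi(P,\mu,\lambda)=-A(g)$ through Equation \eqref{discrete eq}, and conclude by the upper bound $\Phi(P,\mu,\lambda)\leq -A(f)$ from Proposition \ref{est}. Your two additions—justifying the weak Aleksandrov hypothesis of Proposition \ref{est} via Proposition \ref{necessary weak}, and spelling out why interior membership pins the minimizing index to $g(j)$—are details the paper leaves implicit, not a different argument.
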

\begin{proof}
Since $g\in\mathbb{F}$ is a solution function, let $P\in\po_\mu$ be a polytope solution to the Gauss Image problem such that $\mu=\lambda(P,\cdot)$	and $u_j\in \interior{\balpha_K(v_{g(j)})}$, which is guaranteed by Proposition \ref{exist polytope}. Since we additionally have:  \begin{equation}
  \sum\limits_{j\in f^{-1}(i)} \lambda_j = \mu_i 
\end{equation}
and since $\lambda_j=1$ by the equal-weight assumption, Equation (\ref{discrete eq}) in the above Proposition \ref{est} gives us that $\Phi(P,\mu,\lambda)=-A(g)$. Combining this with the result of the same proposition, we obtain that $-A(g)\leq-A(f)$ for any $f\in\mathbb{F}$.

\end{proof}

Define $d\lambda_\eps:=\sum_{j=1}^k\phi_{\eps,j}d\bar\lambda$, where $\bar\lambda$ is the Lebesgue measure on the sphere and $\phi_{\eps,i}$ is a bump function taking only two values ($0$ and $\eps^{-1}$) with disk support centered at $u_j$ of volume $\eps$.

\begin{lem}\label{unif const for eps}
Let $\mu$ and $\lambda$ be discrete and weak Aleksandrov related measures. Then there exists $\eps'>0$ and $\alpha>0$ such that for all $\eps<\eps'$, we have that for each closed set $\omega\subset\sn$: 
	\begin{equation}
		\mu(\omega)\leq\lambda_\eps(\omega_{\frac\pi2-\alpha}).
	\end{equation}
	We call $\alpha$ a uniform weak Aleksandrov constant for $\lambda_\epsilon$.
\end{lem}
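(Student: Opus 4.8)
The plan is to deduce this from the discrete uniform estimate already obtained in Proposition \ref{uniform constant} and to transfer it to the absolutely continuous approximations $\lambda_\eps$ by controlling how far the smoothing spreads the mass of $\lambda$. First I would invoke Proposition \ref{uniform constant} to fix a constant $\alpha_0>0$ such that for every closed set $\omega\subset\sn$,
\begin{equation}
\mu(\omega)\leq\lambda(\omega_{\frac\pi2-\alpha_0}).
\end{equation}
Since $\lambda$ is equal-weight, $\lambda(\omega_{\frac\pi2-\alpha_0})$ simply counts the points $u_j$ lying in the open set $\omega_{\frac\pi2-\alpha_0}$, while $\lambda_\eps$ replaces each such point mass by a bump of total mass $\int\phi_{\eps,j}\,d\bar\lambda=\eps^{-1}\cdot\eps=1$ supported in the spherical cap of angular radius $r_\eps$ about $u_j$, where $r_\eps\to 0$ as $\eps\to 0$. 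The target constant will be $\alpha:=\alpha_0/2$.

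The key geometric step is to show that if the bumps are narrow enough, each bump centered at a point of $\omega_{\frac\pi2-\alpha_0}$ sits entirely inside the slightly enlarged set $\omega_{\frac\pi2-\alpha}$. Choose $\eps'>0$ so small that $r_\eps<\alpha_0/2$ for all $\eps<\eps'$. Suppose $u_j\in\omega_{\frac\pi2-\alpha_0}$, so there is $u'\in\omega$ at spherical distance $<\frac\pi2-\alpha_0$ from $u_j$. Any point $w$ in the support of $\phi_{\eps,j}$ lies within spherical distance $r_\eps$ of $u_j$, so by the spherical triangle inequality the distance from $w$ to $u'$ is $<\frac\pi2-\alpha_0+r_\eps<\frac\pi2-\alpha$; equivalently $w\cdot u'>\cos(\frac\pi2-\alpha)$, i.e. $w\in\omega_{\frac\pi2-\alpha}$. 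Hence the whole cap supporting $\phi_{\eps,j}$ is contained in $\omega_{\frac\pi2-\alpha}$.

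With this containment, assembling the estimate is immediate: for every closed $\omega$ and every $\eps<\eps'$,
\begin{equation}
\lambda_\eps(\omega_{\frac\pi2-\alpha})=\sum_{j=1}^k\int_{\omega_{\frac\pi2-\alpha}}\phi_{\eps,j}\,d\bar\lambda\geq\sum_{j:\,u_j\in\omega_{\frac\pi2-\alpha_0}}\int\phi_{\eps,j}\,d\bar\lambda=\lambda(\omega_{\frac\pi2-\alpha_0})\geq\mu(\omega),
\end{equation}
where the middle inequality discards the nonnegative terms for which $u_j\notin\omega_{\frac\pi2-\alpha_0}$ and uses that for the retained indices the full bump mass $1$ is collected because its support lies in $\omega_{\frac\pi2-\alpha}$, and the last inequality is Proposition \ref{uniform constant}. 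Since neither $\alpha$ nor $\eps'$ depends on $\omega$, this gives the claimed uniformity.

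I do not expect a serious obstacle here; the content is essentially the bookkeeping of the two angles. The one point requiring care is that the smoothing enlarges the relevant angular neighborhood by $r_\eps$, so part of the margin $\alpha_0$ must be spent to absorb it — this is exactly why $\alpha$ is chosen strictly smaller than $\alpha_0$ and $\eps'$ is chosen only after $\alpha_0$ is fixed. A secondary check is that the strict inequalities defining the outer parallel sets survive the triangle inequality, and that the normalization $\int\phi_{\eps,j}\,d\bar\lambda=1$ matches the equal-weight value $\lambda(\{u_j\})=1$, so that no mass is lost in passing from $\lambda$ to $\lambda_\eps$.
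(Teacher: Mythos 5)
Your proposal is correct and follows essentially the same route as the paper's proof: invoke Proposition \ref{uniform constant} to get a constant $\alpha_0$ (the paper's $\alpha'$), choose $\eps'$ so each bump's support lies in a cap of angular radius at most $\alpha_0/2$ about $u_j$, use the spherical triangle inequality to place every such bump inside $\omega_{\frac\pi2-\alpha_0/2}$ whenever $u_j\in\omega_{\frac\pi2-\alpha_0}$, and conclude with $\alpha=\alpha_0/2$. The bookkeeping of angles and the mass-preservation observation match the paper's argument step for step.
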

\begin{proof}
Since $\mu$ and $\lambda$ are weak Aleksandrov related discrete measures, Proposition \ref{uniform constant} tells us that there exists an $\alpha'>0$ such that for any closed set $\omega\subset\sn$,
\begin{equation}\label{5.8}
\mu(\omega)\leq\lambda(\omega_{\frac\pi2-\alpha'}).
\end{equation}
Choose $\eps'$ small enough so that $\spt\phi_{\eps',j}\subset {(u_j)}_{\frac{\alpha'}2}$ for any $j$. If $u_j\in \omega_{\frac \pi 2-\alpha'}$, then $u_j\in (v)_{\frac \pi 2-\alpha'}$ for some $v\in \omega$. Thus, ${(u_j)}_{\frac{\alpha'}2}\subset (v)_{\frac \pi 2-\frac {\alpha'} 2}$. Combining everything, we conclude that for any $\eps\leq\eps'$, if $u_j\in \omega_{\frac \pi 2-\alpha'}$, then

\begin{equation}
	\spt\phi_{\eps,j}\subset {(u_j)}_{\frac{\alpha'}2}\subset(v)_{\frac \pi 2-\frac {\alpha'} 2}\subset \omega_{\frac \pi 2-\frac {\alpha'} 2}.
\end{equation}

  Therefore, since all of the original mass of measure $\lambda$ still remains in $\omega_{\frac \pi 2-\frac {\alpha'} 2}$ for $\lambda_\eps$ (with possible additional mass from the other $u_j$), we conclude that for any $\eps\leq\eps'$,

\begin{equation}
  \lambda(\omega_{\frac\pi2-\alpha'})\leq \lambda_\eps(\omega_{\frac\pi2-\frac{\alpha'}2}). 
\end{equation}
Combining this with \eqref{5.8} and letting $\alpha:=\frac{\alpha'}2$, we complete the proof. 
\end{proof}

\begin{lem}\label{delta-relation}\label{eps sol}
	Given a discrete measure $\mu$ not concentrated on a closed hemisphere and a discrete equal-weight $\lambda$, suppose they satisfy the weak Aleksandrov relation. Then, there exists an $\eps'>0$ such that for all $0<\eps<\eps'$, there exists $K_\eps\in\po_\mu$ solving $\mu=\lambda_\eps(K_\eps,\cdot)$ of the following form:
	\begin{equation}\label{form}
  K_\eps:=(\bigcap_{i=1}^mH^-(1/\beta_{\eps,i},v_i))^*,
\end{equation}
 where $\beta_{\eps,i}=\rho_{K_\eps}(v_i)$. For each $\eps<\eps'$, ${\mathfrak R_{K_\eps}}=1$, there exists a uniform constant $C_{\mu,\lambda}$ such that:
 \begin{equation}
  {\mathfrak r_{K_\eps}}>C_{\mu,\lambda}>0.
\end{equation}
\end{lem}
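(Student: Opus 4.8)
The plan is to produce $K_\eps$ in three stages: first solve the problem for the smoothed absolutely continuous measure $\lambda_\eps$, then collapse the solution to a polytope in $\po_\mu$, and finally establish the inradius bound by a compactness argument that exploits the fact that the weak Aleksandrov constant $\alpha$ in Lemma~\ref{unif const for eps} is \emph{uniform} in $\eps$. First I would verify that $\mu$ and $\lambda_\eps$ are weak Aleksandrov related for all small $\eps$: the total masses agree, $\mu(\sn)=k=\lambda_\eps(\sn)$, and Lemma~\ref{unif const for eps} supplies $\eps'>0$ and $\alpha>0$ with $\mu(\omega)\le\lambda_\eps(\omega_{\frac\pi2-\alpha})\le\lambda_\eps(\omega_{\frac\pi2})$ for every closed $\omega$ and every $\eps<\eps'$, which by Proposition~\ref{Aleks Relation} is exactly the weak Aleksandrov relation. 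Since $\lambda_\eps$ is absolutely continuous, the existence theorem for absolutely continuous $\lambda$ recalled in the Appendix then yields a body $\tilde K_\eps\in\kno$ with $\mu=\lambda_\eps(\tilde K_\eps,\cdot)$.

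Next I would replace $\tilde K_\eps$ by the polytope $P_\eps=\mathrm{conv}\{r_{\tilde K_\eps}(v_i)\}$. Because $\mu$ is not concentrated on a closed hemisphere the $v_i$ positively span $\rn$, so $o\in\interior{P_\eps}$, giving $P_\eps\in\po_\mu$ of the stated form with $\beta_{\eps,i}=\rho_{\tilde K_\eps}(v_i)=\rho_{P_\eps}(v_i)$. Since $P_\eps\subset\tilde K_\eps$ and the two bodies share the vertex $r_{\tilde K_\eps}(v_i)$, one has $\balpha_{\tilde K_\eps}(v_i)\subset\balpha_{P_\eps}(v_i)$, hence $\lambda_\eps(\balpha_{P_\eps}(v_i))\ge\mu_i$. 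As $\lambda_\eps$ is absolutely continuous and the normal cones of the vertices of $P_\eps$ cover $\sn$ up to a set of measure zero, summation gives $\sum_i\lambda_\eps(\balpha_{P_\eps}(v_i))=\lambda_\eps(\sn)=\sum_i\mu_i$, so every inequality is an equality and $\mu=\lambda_\eps(P_\eps,\cdot)$; this is the absolutely continuous analogue of Proposition~\ref{exist polytope}. Finally, since $\balpha_{tK}=\balpha_K$ for $t>0$ the Gauss image measure is scale invariant, so I rescale $P_\eps$ to achieve $R_{K_\eps}=1$ and set $K_\eps:=P_\eps$.

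The hard part will be the uniform lower bound $r_{K_\eps}>C_{\mu,\lambda}$. I would argue by contradiction: if no uniform constant exists, choose $\eps_n<\eps'$ with $r_{K_{\eps_n}}\to0$ while $R_{K_{\eps_n}}=1$. Writing $\beta_{n,i}=\rho_{K_{\eps_n}}(v_i)\in(0,1]$ with $\max_i\beta_{n,i}=1$, pass to a subsequence so that $\beta_{n,i}\to\beta_i\in[0,1]$; then $K_{\eps_n}\to K_0=\mathrm{conv}(\{o\}\cup\{\beta_iv_i\})$ in the Hausdorff metric, and continuity of the inradius forces $r_{K_0}=0$, i.e. $o\in\partial K_0$. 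Hence there is $w\in\sn$ supporting $K_0$ at $o$, so $v_i\cdot w\le0$ whenever $\beta_i>0$; equivalently, every $v_i$ with $v_i\cdot w>0$ has $\beta_i=0$, so its vertex collapses to the origin. Because $\mu$ is not concentrated on a closed hemisphere the set $S=\{i:v_i\cdot w>0\}$ is nonempty and carries the fixed positive mass $\mu(S)=\sum_{i\in S}\mu_i>0$, which must be realized as $\lambda_{\eps_n}$ of the union of the normal cones $\balpha_{K_{\eps_n}}(v_i)$, $i\in S$.

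The key geometric observation I would use is that for directions $v$ with $v\cdot w<-\delta$ the support function of $K_{\eps_n}$ is eventually attained at a non-collapsing vertex, so the normal cones of the collapsing vertices are asymptotically confined to the closed hemisphere $\{v\cdot w\ge0\}$, which is contained in the polar region of $\omega=\bla\{v_i:v_i\cdot w\le0\}\bra$. Feeding this spherically convex set $\omega$ into the uniform inequality $\mu(\omega)\le\lambda_{\eps_n}(\omega_{\frac\pi2-\alpha})$ of Lemma~\ref{unif const for eps} and using that the collapsing cones demand mass $\mu(S)$ essentially outside $\omega_{\frac\pi2-\alpha}$, the $\eps$-independent margin $\alpha$ leaves no room for this mass, contradicting $\mu=\lambda_{\eps_n}(K_{\eps_n},\cdot)$. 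I expect the main obstacle to be making this confinement estimate quantitative and genuinely uniform in $\eps$ (the $\eps$-dependence enters only through $\lambda_\eps$, and must be controlled solely through the fixed constant $\alpha$); once that is in place the desired bound $r_{K_\eps}>C_{\mu,\lambda}>0$ follows and the lemma is complete.
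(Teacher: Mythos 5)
Your Stages 1 and 2 are essentially correct: Stage 1 is exactly Lemma \ref{unif const for eps}, and Stage 2 is a valid absolutely continuous analogue of Proposition \ref{exist polytope}. The genuine gap is in Stage 3, and it is not a fixable technicality: the contradiction you aim for cannot be reached, because under the \emph{weak} Aleksandrov relation solution bodies with arbitrarily small ratio $r/R$ genuinely exist. Take the paper's equilateral triangle example in the plane: $\mu=\lambda=\sum_{i=1}^{3}\delta_{v_i}$ with $v_1+v_2+v_3=0$. For every $t\in(0,1]$ the triangle $P_t=\mathrm{conv}\{tv_1,v_2,v_3\}$ solves $\mu=\lambda(P_t,\cdot)$, and also $\mu=\lambda_\eps(P_t,\cdot)$ for each fixed small $\eps$ (the normal cone at the collapsing vertex $tv_1$ shrinks only to a cone of angular width $\pi/3$ centered at $v_1$, so it keeps the bump at $v_1$ in its interior for all $t>0$), yet $r_{P_t}\to 0$ while $R_{P_t}=1$. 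Concretely, here is where your mass-balance argument terminates: writing $T$ for the non-collapsing indices, $S_c$ for the collapsing ones, and $\Omega=\bla\{v_i:i\in T\}\bra$, the confinement of the collapsing cones near $\Omega^*$, the inequality $\mu(\Omega)\leq\lambda_{\eps_n}(\Omega_{\frac\pi2-\alpha})$, and the disjointness of $\Omega_{\frac\pi2-\alpha}$ from a small neighborhood of $\Omega^*$ combine to give only $\mu(\sn)\geq\mu(\Omega)+\sum_{i\in S_c}\mu_i\geq\mu(\sn)$, i.e., equality throughout rather than a contradiction; in the triangle example this equality is exactly attained. A strict inequality here is precisely what the \emph{strong} Aleksandrov condition would provide, and you only have the weak one.

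The underlying misreading is that the inradius bound in the lemma is \emph{existential}: it asserts that some solution $K_\eps$ of the stated form satisfies $r_{K_\eps}>C_{\mu,\lambda}$, not that every solution does. Your construction starts from Theorem \ref{appendix main}, which hands you an arbitrary, uncontrolled solution $\tilde K_\eps$, and Stage 3 then tries to prove for it a bound that is false for general solutions. The paper's own proof avoids this entirely: it invokes Lemma \ref{weak Aleks bound} (Lemma 4.6 of the companion paper), which already produces a polytope solution of the form \eqref{form} together with the bound $r_P/R_P\geq C_{\mu,\lambda}$, where the constant depends only on the vectors $v_i$ and the uniform weak Aleksandrov constant $\alpha$ from Lemma \ref{unif const for eps}; uniformity in $\eps$ is then immediate, and your Stage 2 becomes unnecessary since the polytope form is part of that lemma's conclusion. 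To repair your argument you would have to reproduce the content of Lemma \ref{weak Aleks bound}, i.e., construct a specific well-conditioned solution (there the control comes from the construction itself, not from an after-the-fact compactness argument applied to an arbitrary solution).
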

\begin{rmk}
This constant depends on vectors $v_i$ and the uniform weak Aleksandrov constant $\alpha$ from Lemma \ref{unif const for eps}.
\end{rmk}

\begin{proof}
Let $\eps'$ and $\alpha$ be constants given by Lemma \ref{unif const for eps}. If the measure $\mu$ is not concentrated on a closed hemisphere, then $K_\eps$ and $C_{\mu,\lambda}$ exist by Lemma \ref{weak Aleks bound} in the Appendix. The independence of constant $C_{\mu,\lambda}$ is guaranteed by uniformity of $\alpha$ for all $\eps'<\eps $. 
\end{proof}

For the rest of this section, we will work with constants $\eps'$, $\alpha$, $C_{\mu,\lambda}$ and polytopes $K_\eps$ from previous Lemmas. Define $\spt_{i,j,\eps}\subset S^{n-1}$ as the region of intersection between the support of $\phi_{\eps,j}$ and $\balpha_{K_\eps}(v_i)$. We note two identities below. The first comes from the definition of $\lambda_\eps$, and the second comes from $\mu=\lambda_\eps(K_\eps,\cdot)$. 

\begin{equation}\label{sum}
\begin{split}
 \bar\lambda(\sum_{i=1}^m\spt_{i,j,\eps})&=\eps \\
 \bar\lambda(\sum_{j=1}^k\spt_{i,j,\eps})&=\eps\mu_i.
\end{split}
   \end{equation}

Before we proceed with the proof, we provide the general picture. Solving the Discrete Gauss Image problem corresponds to putting point-masses at $u_j$ into corresponding normal cones $v_i$. By smoothing out these point masses, we get a sequence of bodies $K_\eps$ such that each original point mass has split into possibly several normal cones. The idea is that by making $\eps\rightarrow 0$, the supports of $\lambda_\eps$ become smaller. This should force the supports to be fully contained in a normal cone eventually. If this happens for some body $K$, then $\mu=\lambda_\eps(K,\cdot)=\lambda(K,\cdot)$. We will see that this does not always happen.  

Let us compute $\Phi(K_\eps,\mu,\lambda_\eps)$. Note that $\spt_{i,j,\eps}\subset\balpha_{K_\eps}(v_i)\subset (v_i)_{ \arccos C_{\mu,\lambda}}$, by definition and Lemma \ref{bound on normal cone}. Therefore, in particular, all logarithms are defined in the following expression:
\begin{equation}
\begin{split}
  \Phi(K_\eps,\mu,\lambda_\eps) 
  & =\int\log\rho_{K_\eps}d\mu+\int\log\rho_{K_\eps^*}d\lambda_\eps \\
  & =\sum_{i=1}^m\mu_i\log\beta_{\eps,i}+\sum_{i,j=1}^{i=m,j=k}\int_{\spt_{i,j,\eps}}\log{\frac{1}{\beta_{\eps,i}v_iv}}\frac{1}{\eps}d\bar\lambda(v). 
\end{split}
\end{equation}

Inside of the right integral, we can pull $\beta_i$ out. Then summing over $j$ and using the above identities (\ref{sum}), we get that the first sum cancels out. We have

\begin{equation}\label{f1}
  \Phi(K_\eps,\mu,\lambda_\eps)= -\sum_{i,j=1}^{i=m,j=k}\int_{\spt_{i,j,\eps}}\log{(v_iv)}\frac{1}{\eps}d\bar\lambda(v).
\end{equation}

\begin{lem}\label{convergence}
Under the conclusion of Lemma \ref{delta-relation}, there exists a subsequence of $K_\eps$ converging as $\eps  \rightarrow 0$ to some convex body $K\in\po_\mu$ of the form \begin{equation}
  K=(\bigcap_{i=1}^mH^-(1/\alpha_i,v_i))^* ,
\end{equation}
where $\beta_{\eps,i}\rightarrow\alpha_i>0$ along this subsequence. For this body, $\mathfrak R_{K}=1$ and $\mathfrak r_{K}\geq C_{\mu,\lambda}$.
\end{lem}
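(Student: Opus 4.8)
The plan is to extract a convergent subsequence from the family $\{K_\eps\}$ using the uniform geometric bounds already established, and then to identify the limit as a polytope of the required form. From Lemma~\ref{delta-relation} we know that for all small $\eps$ the bodies $K_\eps$ lie in $\po_\mu$, satisfy $R_{K_\eps}=1$, and satisfy $r_{K_\eps}>C_{\mu,\lambda}>0$ with a \emph{uniform} constant. Thus every $K_\eps$ is sandwiched between the ball of radius $C_{\mu,\lambda}$ and the unit ball centered at the origin. The first step is to invoke the Blaschke selection theorem: the family $\{K_\eps\}$ is uniformly bounded (all contained in the closed unit ball) and uniformly bounded below (all contain the ball of radius $C_{\mu,\lambda}$), so there is a subsequence converging in the Hausdorff metric to a convex body $K$. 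The two-sided ball containment passes to the limit, giving $r_K\ge C_{\mu,\lambda}>0$ and $R_K\le 1$; in particular $K\in\kno$.

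The second step is to pin down the parametrization. Each $K_\eps$ is determined by the numbers $\beta_{\eps,i}=\rho_{K_\eps}(v_i)$, and these lie in the compact interval $[C_{\mu,\lambda},1]$ by the radius bounds. Hence, after passing to a further subsequence, I can assume $\beta_{\eps,i}\to\alpha_i$ for each $i$, with $\alpha_i\in[C_{\mu,\lambda},1]$, so in particular $\alpha_i>0$. The polar bodies $K_\eps^*=\bigcap_{i=1}^m H^-(1/\beta_{\eps,i},v_i)$ then converge (the defining half-spaces converge, and the finite intersection is stable under this convergence since all $\beta_{\eps,i}$ stay bounded away from $0$ and $\infty$), which forces the Hausdorff limit to be $K=(\bigcap_{i=1}^m H^-(1/\alpha_i,v_i))^*$. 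This is exactly the asserted form, and it confirms $K\in\po_\mu$ since the $\alpha_i$ are strictly positive and $\mu$ is not concentrated on a closed hemisphere (so the origin remains interior).

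The last point is the precise value $R_K=1$. The inequality $R_K\le 1$ is immediate from $R_{K_\eps}=1$ and Hausdorff convergence. For the reverse inequality I would argue that the normalization $R_{K_\eps}=1$ means each $K_\eps$ touches the unit sphere; since there are finitely many vertex directions $v_i$, some fixed direction achieves $\rho_{K_\eps}(v_i)=1$ along a further subsequence (by pigeonhole there is an $i$ with $\beta_{\eps,i}=1$ infinitely often, as $R_{K_\eps}=\max_i\beta_{\eps,i}$). For that $i$ we get $\alpha_i=1$, whence $R_K=\max_i\alpha_i=1$.

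The main obstacle I anticipate is the stability of the polar-body parametrization under the limit, i.e.\ ensuring that the Hausdorff limit of the $K_\eps$ genuinely has support numbers equal to the limiting $\alpha_i$ in the directions $v_i$ and that no degeneracy occurs (such as a vertex collapsing or the origin drifting to the boundary). This is precisely where the uniform lower bound $r_{K_\eps}>C_{\mu,\lambda}$ does the essential work: it prevents both the collapse of the polytope and the loss of the origin from the interior, so that taking the limit of the finitely many defining half-spaces commutes with forming their intersection and with polarity.
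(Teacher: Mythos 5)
Your proposal is correct and takes essentially the same approach as the paper: the paper's entire proof is the observation that, since $R_{K_\eps}=1$ and $r_{K_\eps}>C_{\mu,\lambda}$ uniformly, ``the standard compactness arguments apply.'' Your write-up simply fills in those standard details (Blaschke selection, subsequential convergence of the $\beta_{\eps,i}\in[C_{\mu,\lambda},1]$, stability of the polar form, and the pigeonhole argument giving $R_K=\max_i\alpha_i=1$), all of which are sound.
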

\begin{proof}

Since for each $\eps<\eps'$, we have that $ \mathfrak R_{K_\eps}=1$ and $\mathfrak r_{K_\eps}>C_{\mu,\lambda}$, the standard compactness arguments apply.

\end{proof}

\begin{lem}\label{sum1}
	For subsequence $K_\eps \rightarrow K$ in Lemma \ref{convergence}, \begin{equation}
  \Phi(K_\eps,\mu,\lambda_\eps)\rightarrow\Phi(K,\mu,\lambda).
\end{equation}
\end{lem}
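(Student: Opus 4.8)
The plan is to split $\Phi(K_\eps,\mu,\lambda_\eps)=\int\log\rho_{K_\eps}\,d\mu+\int\log\rho_{K_\eps^*}\,d\lambda_\eps$ into its two integrals and pass to the limit in each: the first is elementary, while the second needs a double-limit argument because both the integrand and the integrating measure move with $\eps$. First I would record the uniform estimates that keep every logarithm tame. Since $R_{K_\eps}=1$ and $r_{K_\eps}>C_{\mu,\lambda}$ by Lemma \ref{delta-relation}, and likewise $R_K=1$, $r_K\geq C_{\mu,\lambda}$ by Lemma \ref{convergence}, we have $h_{K_\eps}(v),h_K(v)\in[C_{\mu,\lambda},1]$ for every $v\in\sn$, so
\begin{equation}
\rho_{K_\eps^*}=\frac1{h_{K_\eps}}\in\Big[1,\tfrac1{C_{\mu,\lambda}}\Big],\qquad \rho_{K^*}=\frac1{h_K}\in\Big[1,\tfrac1{C_{\mu,\lambda}}\Big].
\end{equation}
Moreover, since $\beta_{\eps,i}\to\alpha_i$ along the chosen subsequence, $K_\eps^*=\bigcap_iH^-(1/\beta_{\eps,i},v_i)\to K^*=\bigcap_iH^-(1/\alpha_i,v_i)$ in the Hausdorff metric, hence $h_{K_\eps}\to h_K$ uniformly on $\sn$, and therefore $\log\rho_{K_\eps^*}\to\log\rho_{K^*}$ uniformly on $\sn$.

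For the $\mu$-integral, $\mu=\sum_i\mu_i\delta_{v_i}$ is fixed and $\rho_{K_\eps}(v_i)=\beta_{\eps,i}\to\alpha_i>0$, so $\int\log\rho_{K_\eps}\,d\mu=\sum_i\mu_i\log\beta_{\eps,i}\to\sum_i\mu_i\log\alpha_i=\int\log\rho_K\,d\mu$. For the $\lambda_\eps$-integral I would write
\begin{equation}
\int\log\rho_{K_\eps^*}\,d\lambda_\eps-\int\log\rho_{K^*}\,d\lambda=\int\big(\log\rho_{K_\eps^*}-\log\rho_{K^*}\big)\,d\lambda_\eps+\Big(\int\log\rho_{K^*}\,d\lambda_\eps-\int\log\rho_{K^*}\,d\lambda\Big).
\end{equation}
The first summand is at most $k\,\|\log\rho_{K_\eps^*}-\log\rho_{K^*}\|_\infty\to0$ by the uniform convergence above together with $\lambda_\eps(\sn)=k$. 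For the second summand, $\log\rho_{K^*}$ is a fixed bounded continuous function and $\lambda_\eps\rightharpoonup\lambda$ weakly: for any continuous $g$, $\int g\,d\lambda_\eps=\sum_j\frac1\eps\int_{\spt\phi_{\eps,j}}g\,d\bar\lambda\to\sum_jg(u_j)=\int g\,d\lambda$, because each disk $\spt\phi_{\eps,j}$ of volume $\eps$ shrinks to $u_j$. Adding the two convergent integrals yields $\Phi(K_\eps,\mu,\lambda_\eps)\to\Phi(K,\mu,\lambda)$.

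The only genuine obstacle is that one cannot pass to the limit directly inside $\int\log\rho_{K_\eps^*}\,d\lambda_\eps$, since the integrand and the integrating measure both depend on $\eps$; the displayed split resolves this by separating a uniform-convergence estimate (which stays controlled precisely because the total mass $\lambda_\eps(\sn)=k$ is constant in $\eps$) from a weak-convergence estimate against the single fixed limit function $\log\rho_{K^*}$. This route is cleaner than arguing through the cancelled form \eqref{f1}, which would instead force one to track the limiting transportation weights $\frac1\eps\bar\lambda(\spt_{i,j,\eps})$; the uniform bounds $R=1$ and $r\geq C_{\mu,\lambda}$ make all remaining steps routine.
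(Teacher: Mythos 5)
Your proof is correct and follows essentially the same route as the paper: the identical splitting $\int\log\rho_{K_\eps^*}\,d\lambda_\eps-\int\log\rho_{K^*}\,d\lambda = \int(\log\rho_{K_\eps^*}-\log\rho_{K^*})\,d\lambda_\eps+\int\log\rho_{K^*}\,(d\lambda_\eps-d\lambda)$, with the first term controlled by uniform convergence of the logarithms together with the constant total mass $\lambda_\eps(\sn)=k$, and the second by weak convergence $\lambda_\eps\rightharpoonup\lambda$ against the fixed continuous function $\log\rho_{K^*}$. The only differences are cosmetic: you justify the uniform convergence via the bounds $\rho_{K_\eps^*}\in[1,1/C_{\mu,\lambda}]$ and Hausdorff convergence of the polars, while the paper uses the Lipschitz estimate $\log a-\log b\leq\frac{1}{\min(a,b)}|a-b|$ with $\rho_{K_\eps^*},\rho_{K^*}\geq 1$, and you spell out the weak-convergence computation that the paper leaves as a remark.
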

\begin{proof}
	We will assume the convergence for all sequences $K_\eps\rightarrow K$ to simplify notation. This implies that $\rho_{K_\eps}\rightarrow \rho_K$ and $\rho_{K^*_\eps}\rightarrow \rho_{K^*}$. Therefore, $\int\log\rho_{K_\eps} d\mu \rightarrow \int\log\rho_Kd\mu$. Now for the second summand of $\Phi$, consider the following:
	
	\begin{equation}\label{est1}
  \int\log\rho_{K_\eps^*}d\lambda_\eps-\int\log\rho_{K^*}d\lambda = \int\log\rho_{K_\eps^*}-\log\rho_{K^*}d\lambda_\eps+\int\log\rho_{K^*}(d\lambda_\eps-d\lambda).
\end{equation}

First, we note that $\rho_{K^*_\eps}\geq1$ and $\rho_{K^*}\geq1$. Since for any $a,b$ we have that  $\log a - \log b\leq\frac 1 {min(a,b)}|a-b|$, we obtain $\log\rho_{K_\eps^*}-\log\rho_{K^*}\leq|\rho_{K^*_\eps}- \rho_{K^*}|$. Since $\rho_{K^*_\eps}\rightarrow \rho_{K^*}$ converges uniformly and $\int d\lambda_\eps = k$, we obtain that:

\begin{equation}
	\int\log\rho_{K_\eps^*}-\log\rho_{K^*}d\lambda_\eps\rightarrow 0.
\end{equation}

It is not hard to see that our bump function $\phi_{\eps,i}$ works nicely for limits of continuous functions. That is, $\lambda_\eps$ converges to $\lambda$ weakly as functionals on the space of continuous functions. This implies that the second summand of (\ref{est1}) approaches zero. Combining all of the above implies that $\Phi(K_\eps,\mu,\lambda_\eps)\rightarrow\Phi(K,\mu,\lambda)$.

\end{proof}

\begin{lem}\label{sum2}
	For subsequence $K_\eps \rightarrow K$ in Lemma \ref{convergence}, there exist a subsequence such that for all $i,j$, \begin{equation}
  \int_{\spt_{i,j,\eps}}\frac 1 \eps d\bar\lambda \rightarrow c_{i,j}.
\end{equation} 
Coefficients $c_{i,j}$ form a matrix $[c_{i,j}]$ with $k$ columns and $m$ rows such that each coefficient in the matrix is nonnegative, the sum over any column is equal to 1, and the sum over the $i$-th row is equal to $\mu_i$.
\end{lem}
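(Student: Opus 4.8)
The plan is to read off the conclusion from the two mass identities in \eqref{sum} together with a finite-dimensional compactness argument. First I would name the quantities under study: for the subsequence $K_\eps \to K$ produced in Lemma \ref{convergence}, set
\[
a_{i,j,\eps} := \int_{\spt_{i,j,\eps}} \frac{1}{\eps}\, d\bar\lambda = \frac{1}{\eps}\,\bar\lambda(\spt_{i,j,\eps}).
\]
Nonnegativity of each $a_{i,j,\eps}$ is immediate, since $\bar\lambda \ge 0$ and $\eps > 0$. The two exact constraints for fixed $\eps$ come directly from \eqref{sum}: dividing the first identity $\bar\lambda(\sum_i \spt_{i,j,\eps}) = \eps$ by $\eps$ gives $\sum_{i=1}^m a_{i,j,\eps} = 1$ for every column $j$, and dividing the second identity $\bar\lambda(\sum_j \spt_{i,j,\eps}) = \eps\mu_i$ by $\eps$ gives $\sum_{j=1}^k a_{i,j,\eps} = \mu_i$ for every row $i$. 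The additivity making these identities hold rests on the fact that the normal cones $\balpha_{K_\eps}(v_i)$ for distinct $i$ meet only in the normal cones of the lower-dimensional faces of $K_\eps$, which carry $\bar\lambda$-measure zero, so the sets $\spt_{i,j,\eps}$ are $\bar\lambda$-essentially disjoint across $i$; this disjointness is already encoded in \eqref{sum}.

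With the column constraint in hand, every entry obeys $0 \le a_{i,j,\eps} \le 1$, so the matrix $[a_{i,j,\eps}]$ lives in the compact cube $[0,1]^{mk} \subset \mathbb{R}^{m\times k}$. Since there are only finitely many pairs $(i,j)$, a single application of Bolzano--Weierstrass lets me pass to a further subsequence of the one from Lemma \ref{convergence} along which all $mk$ entries converge simultaneously, say $a_{i,j,\eps} \to c_{i,j}$.

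Finally I would pass the three constraints to the limit. Nonnegativity, the column sums $\sum_i a_{i,j,\eps} = 1$, and the row sums $\sum_j a_{i,j,\eps} = \mu_i$ are all closed conditions (finitely many linear equalities and inequalities), hence preserved under limits, yielding $c_{i,j} \ge 0$, $\sum_{i=1}^m c_{i,j} = 1$, and $\sum_{j=1}^k c_{i,j} = \mu_i$, which is exactly the assertion. The argument is essentially routine; the only point requiring genuine care is the essential disjointness across $i$ underlying the additivity in \eqref{sum}, and since that identity is already established, this lemma reduces to bookkeeping plus compactness. I expect the limiting matrix $[c_{i,j}]$ to reappear as a point of a transportation polytope with prescribed row sums $\mu_i$ and unit column sums, which is presumably why this row/column normalization is emphasized here.
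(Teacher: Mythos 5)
Your proposal is correct and follows essentially the same route as the paper's own (much terser) proof: boundedness $0 \le \int_{\spt_{i,j,\eps}}\frac{1}{\eps}d\bar\lambda \le 1$ plus compactness to extract a further subsequence along which all $mk$ entries converge, with nonnegativity immediate and the row/column sum constraints inherited in the limit from the identities \eqref{sum}. Your added care about the $\bar\lambda$-essential disjointness of the sets $\spt_{i,j,\eps}$ across $i$ is a fair point but is, as you note, already built into \eqref{sum}, so it introduces no new content.
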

\begin{rmk}
This corresponds to how much of weight falls into normal cone of vertex. In the special case $\mu_i=1$, for the equal-weights problem, $[c_{i,j}]$ is a doubly stochastic matrix.
\end{rmk}
\begin{proof}
We start with the subsequence from Lemma \ref{convergence}. 
Since \begin{equation}
   0\leq \int_{\spt_{i,j,\eps}}\frac 1 \eps d\bar\lambda \leq 1,
\end{equation}
the standard compactness arguments insure convergence of volumes of supports for some further subsequence. The coefficients of the matrix are clearly nonnegative. Properties of the matrix follow from identities (\ref{sum}).
\end{proof}

From all the above, we obtain the following form for the functional $\Phi(K,\mu,\lambda)$.

\begin{lem}\label{f2}
For $K$ obtained in Lemma \ref{sum2}, we have:
\begin{equation}
  \Phi(K,\mu,\lambda)= -\sum_{i,j=1}^{i=m,j=k}c_{i,j}\log(v_iu_j)>0.
\end{equation}
Note that if $v_iu_j\leq 0$ then $c_{i,j}=0$, we force the term $c_{i,j}\log(v_iu_j)$ to be zero in the above sum.
\end{lem}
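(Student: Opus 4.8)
The plan is to pass to the limit in the explicit expression \eqref{f1} for $\Phi(K_\eps,\mu,\lambda_\eps)$ term by term and then identify the limit with $\Phi(K,\mu,\lambda)$ via Lemma~\ref{sum1}. Since the index set $\{1,\dots,m\}\times\{1,\dots,k\}$ is finite, it suffices to prove that for each fixed pair $(i,j)$,
\begin{equation}
\int_{\spt_{i,j,\eps}}\log(v_iv)\,\tfrac1\eps\,d\bar\lambda(v)\longrightarrow c_{i,j}\log(v_iu_j)
\end{equation}
along the subsequence of Lemma~\ref{sum2}, with the convention that the right-hand side is $0$ when $v_iu_j\le 0$. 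Summing over $(i,j)$ and negating then turns \eqref{f1} into $\Phi(K_\eps,\mu,\lambda_\eps)\to -\sum_{i,j}c_{i,j}\log(v_iu_j)$, while Lemma~\ref{sum1} gives $\Phi(K_\eps,\mu,\lambda_\eps)\to\Phi(K,\mu,\lambda)$; comparing the two limits yields the claimed identity.

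First I would exploit that each region $\spt_{i,j,\eps}$ sits inside $\spt\phi_{\eps,j}$, a disk about $u_j$ of volume $\eps$, whose radius therefore tends to $0$ as $\eps\to 0$. Hence on $\spt_{i,j,\eps}$ the integration variable $v$ converges uniformly to $u_j$, so $\log(v_iv)\to\log(v_iu_j)$ uniformly whenever the latter is finite. For pairs with $v_iu_j>0$ this is exactly the regime we need: combined with $\int_{\spt_{i,j,\eps}}\tfrac1\eps\,d\bar\lambda\to c_{i,j}$ from Lemma~\ref{sum2}, the product converges to $c_{i,j}\log(v_iu_j)$, and the logarithm stays bounded since $\spt_{i,j,\eps}\subset\balpha_{K_\eps}(v_i)\subset(v_i)_{\arccos C_{\mu,\lambda}}$ forces $v_iv>C_{\mu,\lambda}>0$ on the support by Lemma~\ref{bound on normal cone}. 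For pairs with $v_iu_j\le 0$ I would observe that $u_j$ then lies at angular distance at least $\tfrac\pi2>\arccos C_{\mu,\lambda}$ from $v_i$, so a shrinking disk about $u_j$ is eventually disjoint from $(v_i)_{\arccos C_{\mu,\lambda}}$; thus $\spt_{i,j,\eps}=\emptyset$ for all small $\eps$, which forces both the integral and $c_{i,j}$ to vanish and makes the $(i,j)$ term zero, consistent with the stated convention.

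Finally, for the sign I would note that whenever $c_{i,j}>0$ the support $\spt_{i,j,\eps}$ is nonempty for arbitrarily small $\eps$, so passing to the limit in $v_iv>C_{\mu,\lambda}$ gives $v_iu_j\ge C_{\mu,\lambda}>0$; together with $v_iu_j\le 1$ this makes $\log(v_iu_j)\le 0$, so each summand $-c_{i,j}\log(v_iu_j)$ is nonnegative and $\Phi(K,\mu,\lambda)\ge 0$, with strict positivity precisely when some pair of positive weight satisfies $v_iu_j<1$. The main obstacle is the interchange of limit and integration in the degenerate direction: the integrand $\log(v_iv)$ runs into the singularity of the logarithm exactly where $v_iu_j\to 0$, so the argument cannot rest on a termwise pointwise estimate but must invoke the uniform cone bound $v_iv>C_{\mu,\lambda}$, valid for the whole sequence through Lemma~\ref{delta-relation}, to keep the integrands uniformly bounded away from $-\infty$ on every nonempty support.
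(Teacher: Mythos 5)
Your proposal is correct and follows essentially the same route as the paper: the paper likewise deduces the termwise convergence $\int_{\spt_{i,j,\eps}}\log(v_iv)\frac{1}{\eps}\,d\bar\lambda \rightarrow c_{i,j}\log(v_iu_j)$ from the uniform cone bound $\spt_{i,j,\eps}\subset\balpha_{K_\eps}(v_i)\subset(v_i)_{\arccos C_{\mu,\lambda}}$ together with Lemma \ref{sum2} and the shrinking bump-function supports, and then combines this with Lemma \ref{sum1} and Equation \eqref{f1}. Your explicit handling of the degenerate pairs $v_iu_j\leq 0$ (supports eventually empty, forcing $c_{i,j}=0$) and of the sign of the sum is exactly the detail the paper's terser proof leaves implicit.
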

\begin{proof}
Recall that from Lemma \ref{convergence}, we have a bound $ \mathfrak r_{K_\eps}>C_{\mu,\lambda}$ and  $\mathfrak R_{K_\eps}=1$. Therefore, $\spt_{i,j,\eps}\subset\balpha_{K_\eps}(v_i)\subset (v_i)_{\arccos{C_{\mu,\lambda}}}$, by Lemma \ref{bound on normal cone}. In particular, this means that $\log(v_iv)$ is continuous and bounded from below on $\spt_{i,j,\eps}=\spt\phi_{\eps,j}\cap\balpha_{K_\eps}(v_i)$, with bound independent of $\eps$. This with Lemma \ref{sum2} and the construction of the bump function implies that
\begin{equation}
  \int_{\spt_{i,j,\eps}}\log{(v_iv)}\frac{1}{\eps}d\bar\lambda(v)\rightarrow c_{i,j}\log(v_iu_j).
\end{equation}
Combining this with Lemma \ref{sum1} and Equation (\ref{f1}), we obtain the stated result.
\end{proof}
 The matrix $[c_{i,j}]$ is known as the solution matrix to transportation problems of transferring desirable material from $m$ sources to $k$ locations. All such matrices ($k$ columns and $m$ rows such that each coefficient in matrix is nonnegative, sum over any column is equal to 1, and sum over the $i$-th row is equal to $\mu_i$) are known as transportation polytopes. For reference, see Section 8.1 in \cite{Brualdi}. It is a convex polytope with known extreme points. The special case of this polytope, when $k=m$ and $\mu_i=1$, is known as the Birkhoff-von Neumann theorem.

\begin{lem}\label{mat}
There is a natural one-to-one correspondence between vertices of transportation polytopes containing matrix $[c_{i,j}]$ and assignment functionals from $\mathbb{F}$. 
\end{lem}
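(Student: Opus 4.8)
The plan is to establish a bijection between the vertices (extreme points) of the transportation polytope and the assignment functions in $\mathbb{F}$. Recall that the transportation polytope $T$ consists of all nonnegative matrices $[c_{i,j}]$ with $k$ columns and $m$ rows whose column sums equal $1$ and whose $i$-th row sum equals $\mu_i$. First I would recall the standard characterization of the extreme points of such a polytope: a feasible matrix is a vertex if and only if its support (the set of positions with nonzero entries) contains no cycle, i.e. the bipartite graph on rows and columns having an edge for each nonzero entry is a forest. This is the classical combinatorial fact about transportation polytopes (see Section 8.1 in \cite{Brualdi}), which I will invoke.

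Next I would define the correspondence explicitly in the direction $\mathbb{F} \to \{\text{vertices}\}$. Given an assignment function $f \in \mathbb{F}$, since $\lambda$ is equal-weight each index $j \in \{1,\ldots,k\}$ is assigned to exactly one $i = f(j)$, and the defining property $\sum_{j \in f^{-1}(i)} 1 = \mu_i$ guarantees that the $0$-$1$ matrix $C^f$ with entries $c^f_{i,j} = 1$ if $f(j) = i$ and $0$ otherwise has column sums $1$ and $i$-th row sum $\mu_i$. Thus $C^f \in T$. Each column of $C^f$ has a single nonzero entry, so the associated bipartite support graph has exactly $k$ edges, one per column vertex, and contains no cycle (a cycle would need some column to touch two rows); hence $C^f$ is a vertex of $T$. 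This gives a well-defined injective map from $\mathbb{F}$ into the vertex set, with injectivity immediate since distinct functions produce distinct $0$-$1$ patterns.

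For the reverse direction I would take an arbitrary vertex $C = [c_{i,j}]$ of $T$ and argue that its entries are forced to be integers, in fact $0$ or $1$. Because every column sum is $1$ and the entries are nonnegative, it suffices to show each column has a single nonzero entry equal to $1$; equivalently that $C$ is a $0$-$1$ matrix. Here I would use that the constraint matrix of a transportation problem is totally unimodular together with the integrality of the right-hand side data ($1$ for columns and $\mu_i \in \mathbb{N}$ for rows), so every vertex of $T$ is integral; combined with the column-sum-$1$ constraint and nonnegativity this forces each column to be a standard basis vector. Defining $f(j) := i$ where $c_{i,j} = 1$ then yields a function with $\sum_{j \in f^{-1}(i)} 1 = \sum_j c_{i,j} = \mu_i$, so $f \in \mathbb{F}$, and clearly $C^f = C$. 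This shows the two maps are mutually inverse.

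The main obstacle, and the step requiring the most care, is the integrality/vertex characterization: I must rule out vertices with fractional entries and with columns splitting mass across several rows. The cleanest route is total unimodularity of the incidence structure (or equivalently the acyclic-support characterization of transportation-polytope vertices), which instantly gives integral extreme points; the alternative, more hands-on argument constructs a nonzero perturbation $[c_{i,j} \pm \eps d_{i,j}]$ preserving all row and column sums whenever the support graph contains a cycle, contradicting extremality. Either way the key point is that the equal-weight hypothesis on $\lambda$ (all column sums equal to the common value $1$) is exactly what collapses the integral vertices down to $0$-$1$ matrices and hence to honest functions $f$, rather than to more general integer-valued transportation patterns.
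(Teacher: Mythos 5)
Your proof is correct, but it takes a genuinely different route from the paper's. The paper's entire argument is a citation of the recursive construction of transportation-polytope vertices (Corollary 8.1.4 in \cite{Brualdi}), from which the identification of vertices with assignment functions is asserted to be readable; you instead combine the acyclic-support (forest) characterization of extreme points, which settles the direction $f\mapsto C^f$, with total unimodularity of the bipartite incidence matrix, which gives integrality of vertices and settles the converse, and then you check that the two maps are mutually inverse. Your version is self-contained up to textbook facts and, more usefully, isolates where each hypothesis enters: the equal-weight assumption on $\lambda$ makes all column sums equal to $1$, and integrality of the row sums is what collapses integral vertices to $0$--$1$ matrices, i.e.\ to honest functions. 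The one point to flag is your appeal to $\mu_i\in\mathbb{N}$: this hypothesis appears neither in the lemma nor in the surrounding section, where the $\mu_i$ are arbitrary positive reals, but the lemma is in fact false without it. For instance, with $m=2$, $k=3$, $\mu_1=\mu_2=\tfrac32$, the matrix with rows $(1,\tfrac12,0)$ and $(0,\tfrac12,1)$ has acyclic support and is therefore a vertex, while $\mathbb{F}=\emptyset$, so no bijection can exist; the paper's one-line citation of \cite{Brualdi} is subject to exactly the same restriction. Since the paper's intended setting has integer $\mu_i$ (cf.\ Theorem \ref{main intro} and the reduction remark at the end of Section 4, where $\mu_i=|f^{-1}(i)|$), you have surfaced an implicit assumption of the lemma rather than introduced a gap of your own.
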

\begin{proof}
Corollary 8.1.4 in \cite{Brualdi} provides a recursive construction for vertices of the transportation polytope. It is not hard to see from this construction that each vertex is an assignment functional, and every assignment functional is a vertex.
\end{proof}
Convexity of the transportation polytope implies the following:
\begin{prop}\label{eq5} There exist $0\leq\theta_f\leq1$ for $f\in\mathbb{F}$, such that $\sum_{f\in\mathbb{F}}\theta_f=1$ and 
	\begin{equation}
  \Phi(K,\mu,\lambda)= -\sum_{f\in\mathbb F}\theta_fA(f).
\end{equation}
Since  $\Phi(K,\mu,\lambda)<\infty$, we obtain that there exists $f\in\mathbb{F}$ such that $u_jv_{f(j)}>0$. In the above sum, if $A(f)=-\infty$, then $\theta_f=0$.
\end{prop}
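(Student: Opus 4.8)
The plan is to combine the transportation-polytope structure from Lemma \ref{f2} with the vertex correspondence of Lemma \ref{mat}. By Lemma \ref{f2}, the matrix $[c_{i,j}]$ is a point of the transportation polytope $T$ consisting of all nonnegative $m\times k$ matrices whose columns sum to $1$ and whose $i$-th row sums to $\mu_i$. Since $T$ is a bounded polytope, it is the convex hull of its finitely many vertices (see \cite{Brualdi}), so I can write
\begin{equation}
  [c_{i,j}]=\sum_{f\in\mathbb F}\theta_f V_f,\qquad \theta_f\ge0,\quad\sum_{f\in\mathbb F}\theta_f=1,
\end{equation}
where by Lemma \ref{mat} the vertices $V_f$ are indexed by the assignment functions $f\in\mathbb F$. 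Because $\lambda$ is equal-weight, the vertex $V_f$ is the $0/1$ matrix with $(V_f)_{i,j}=1$ exactly when $f(j)=i$; one checks directly that its column sums equal $1$ and its $i$-th row sum equals $|f^{-1}(i)|=\mu_i$, confirming $V_f\in T$.

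Next I would substitute this convex decomposition into the formula of Lemma \ref{f2} and interchange the order of summation. The key observation is that whenever $v_iu_j\le0$ we have $c_{i,j}=0$ by Lemma \ref{f2}; since all $\theta_f(V_f)_{i,j}$ are nonnegative and sum to $c_{i,j}=0$, this forces $(V_f)_{i,j}=0$ for every $f$ with $\theta_f>0$. Consequently, for each such $f$ and each index $j$, the unique nonzero entry $(V_f)_{f(j),j}=1$ sits in a position with $v_{f(j)}u_j>0$, whence $A(f)=\sum_j\log(u_jv_{f(j)})>-\infty$. Carrying out the interchange over the (finite) good positions, and adopting the convention $\theta_fA(f)=0$ whenever $\theta_f=0$, yields
\begin{equation}
  \Phi(K,\mu,\lambda)=-\sum_{i,j}c_{i,j}\log(v_iu_j)=-\sum_{f\in\mathbb F}\theta_f\sum_{j=1}^k\log(v_{f(j)}u_j)=-\sum_{f\in\mathbb F}\theta_fA(f),
\end{equation}
which is the claimed identity.

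Finally, the assertion about proper assignment functions is immediate once the identity is in place. By Lemma \ref{f2}, $\Phi(K,\mu,\lambda)$ is a finite positive number, being a finite sum of finite terms. If some $f$ with $A(f)=-\infty$ had $\theta_f>0$, then $-\theta_fA(f)=+\infty$ would force $\Phi=+\infty$, a contradiction; hence $\theta_f=0$ whenever $A(f)=-\infty$. Since $\sum_{f}\theta_f=1$, at least one $f$ has $\theta_f>0$, and for this $f$ we have $A(f)>-\infty$, i.e.\ $u_jv_{f(j)}>0$ for all $j$, producing the desired proper assignment function.

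I expect the only genuinely delicate point to be the bookkeeping around the value $-\infty$: one must argue carefully that the nonnegativity of the convex weights, together with $c_{i,j}=0$ on the positions where $v_iu_j\le0$, prevents any $-\infty$ term from entering the sum with positive weight. This is precisely what makes the summation interchange legitimate and rules out any $0\cdot\infty$ ambiguity, so that the finiteness of $\Phi$ can then be leveraged against the $\theta_f$.
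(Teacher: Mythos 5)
Your proof is correct and takes essentially the same route as the paper's: the paper also writes $-\Phi(K,\mu,\lambda)$ as the element-wise product of $[c_{i,j}]$ with $[\log(v_iu_j)]$ and then invokes the convexity of the transportation polytope together with Lemma \ref{mat} to obtain the convex combination over assignment functions. The only difference is that you spell out what the paper leaves implicit — the explicit $0/1$ form of the vertices $V_f$ and the careful bookkeeping showing that positions with $v_iu_j\le 0$ carry zero weight, so no $-\infty$ term enters with $\theta_f>0$ — which is a faithful filling-in of the paper's two-sentence argument rather than a different approach.
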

\begin{proof}
	From Lemma \ref{f2}, $-\Phi(K,\mu,\lambda)$ is the element-wise product of matrices $[c_{i,j}]$ and $[\log(v_iu_j)]$. By convexity of the transportation polytope and Lemma \ref{mat}, we obtain the above result.
\end{proof}

Now the Theorem \ref{main} follows:
\begin{proof}[\textbf{Theorem \ref{main}. First part. Unique maximizer.} Proof.] We have already seen that assignment functional can only be a solution if it is a maximizer, by Proposition \ref{uniq discrete}. Suppose the assignment functional, $A(g)$, is maximized at exactly one $g\in\mathbb{F}$. Combining Proposition \ref{eq5} and Proposition \ref{est} we obtain that for the constructed body $K$ in Lemma \ref{sum2}, \begin{equation}\label{eq15}
  \Phi(K,\mu,\lambda)= -\sum_{f\in\mathbb F}\theta_fA(f)\geq -A(g)\geq \Phi(K,\mu,\lambda).
\end{equation}
 Therefore, $\theta_g=1$, and thus $[c_{i,j}]$ is a vertex of the transport polytope. In particular, this means that from Lemma \ref{sum2},

  \begin{equation}\label{eq10}
  \begin{split}
  \int_{\spt_{i,j,\eps}} \frac 1 \eps &d\bar\lambda \rightarrow 0 \text{ if } g(j)\neq i\\
  \int_{\spt_{i,j,\eps}} \frac 1 \eps &d\bar\lambda \rightarrow 1 \text{ if } g(j)= i.
\end{split}
\end{equation}
Consider some $K_\eps$. We claim that if $\int_{\spt_{i,j,\eps}} \frac 1 \eps  d\bar\lambda>\frac{1}{2}$, then $\lambda_j\in \interior{\balpha_{K_\eps}(v_i)}$. This is because $\spt_{i,j,\eps}$ is an intersection of the convex normal cone at a point $v_i$ with a spherical disk of volume $\eps$. If the intersection of such sets contains at least $\frac{1}{2}$ of the volume of disk, the center of the disk lies in the interior of the normal cone. So \eqref{eq10} implies that for small enough $\eps$, $\lambda_j\in \interior{\balpha_{K_\eps}(v_i)}$, when $g(j)= i$. Therefore, $\mu=\lambda(K_\eps,\cdot)$. In particular, $g$ is the solution function.
\end{proof}
\begin{rmk}
While we proved that for small enough $\eps$ over some subsequence $K_\eps$ is a solution, we can not guarantee that $K$ is a solution. The problem is that vectors $u_j$ may lie on the boundary of $\balpha_{K}(v_i)$ in general (even if  $\Phi(K,\mu,\lambda)=-A(g)$).
\end{rmk}

\begin{prop}\label{pertub}
	For any discrete $\lambda$ and discrete $\mu$ not contained in a closed hemisphere, polytope solutions to the Discrete Gauss Image problem from the set $\po_\mu$: \begin{equation}
  (\bigcap_{i=1}^mH^-(1/\alpha_i,v_i))^* 
\end{equation}
form an open set in terms of $(\alpha_1,\ldots\alpha_m)\in\mathbb{R}^m$. 
\end{prop}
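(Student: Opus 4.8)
The plan is to recast the condition ``$P(\alpha)$ is a solution'' as a finite system of strict linear inequalities in $\alpha=(\alpha_1,\dots,\alpha_m)$, whose solution set is then visibly open. Write $P(\alpha):=(\bigcap_{i=1}^m H^-(1/\alpha_i,v_i))^*$ for $\alpha_i>0$. First I would pass to the primal description via polar duality: since $\mu$ is not concentrated on a closed hemisphere, the directions $v_i$ are not contained in a closed hemisphere, so for every positive $\alpha$ the origin lies in the interior of the convex hull of the points $\alpha_i v_i$, and polar duality yields the clean identity
\[
P(\alpha)=\text{conv}\{\alpha_1 v_1,\dots,\alpha_m v_m\}.
\]
In particular $r_{P(\alpha)}(v_i)=\alpha_i v_i$ whenever $\alpha_i v_i$ is a vertex of $P(\alpha)$, and in that case $\balpha_{P(\alpha)}(v_i)=N(P(\alpha),\alpha_i v_i)$ is the normal cone at that vertex. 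This reduces the whole question to the elementary geometry of the convex hull of the moving points $\alpha_i v_i$.

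Next I would characterize the solution parameters. If $P(\alpha)$ is a solution then, by Proposition \ref{exist polytope}, there is an assignment function $f\in\mathbb F$ with $u_j\in\interior\balpha_{P(\alpha)}(v_{f(j)})$ for every $j$. Since the linear functional $x\mapsto u_j\cdot x$ attains its maximum over $\text{conv}\{\alpha_{i'}v_{i'}\}$ at a vertex, the interiority $u_j\in\interior N(P(\alpha),\alpha_{f(j)}v_{f(j)})$ is precisely the statement that this maximum is attained uniquely at $\alpha_{f(j)}v_{f(j)}$, i.e.
\[
\alpha_{f(j)}\,(u_j\cdot v_{f(j)}) \;>\; \alpha_{i'}\,(u_j\cdot v_{i'}) \quad\text{for all } i'\neq f(j).\qquad (\star)
\]
Conversely, I would check that if $(\star)$ holds for all $j$ for some $f\in\mathbb F$, then $P(\alpha)$ solves the problem. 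Indeed, $(\star)$ makes $\alpha_{f(j)}v_{f(j)}$ the unique maximizer for $u_j$, hence a vertex, with $u_j$ in the interior of its normal cone; as the interiors of distinct normal cones are disjoint, $u_j$ lies in no other closed normal cone. Thus $\balpha_{P(\alpha)}(v_i)\cap\{u_1,\dots,u_k\}=\{u_j:f(j)=i\}$, and the defining relation $\sum_{j\in f^{-1}(i)}\lambda_j=\mu_i$ of $\mathbb F$ gives $\lambda(P(\alpha),\{v_i\})=\mu_i$ for every $i$; summing over $i$ (and using that $f\in\mathbb F$ forces $\lambda(\sn)=\mu(\sn)$) shows no mass escapes, so $\mu=\lambda(P(\alpha),\cdot)$.

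Granting this equivalence, openness is immediate: for each fixed $f\in\mathbb F$ the set of $\alpha$ satisfying $(\star)$ is a finite intersection of open half-spaces in $\mathbb R^m$, hence open, and the set of all solution parameters is the union of these sets over the finitely many $f\in\mathbb F$, hence open. Concretely, given a solution $P(\alpha^0)$ with assignment $f$, every inequality in $(\star)$ holds strictly at $\alpha^0$, so a whole neighborhood of $\alpha^0$ consists of solution parameters carrying the same $f$. I expect the only substantive step to be the equivalence of the middle paragraph: one must verify that $(\star)$ simultaneously keeps every $\alpha_i v_i$ a vertex and prevents any $u_j$ from slipping onto the boundary of a neighboring normal cone, so that $\lambda(P(\alpha),\cdot)$ remains a finite measure equal to $\mu$. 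The openness itself is then a triviality about strict linear inequalities.
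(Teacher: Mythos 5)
Your proposal is correct and follows essentially the same route as the paper's (much terser) proof: both rest on the characterization that $P(\alpha)$ solves the problem if and only if each $u_j$ lies in the interior of the normal cone $\balpha_{P(\alpha)}(v_{f(j)})$ for some $f\in\mathbb F$, and then observe that this is an open condition in $(\alpha_1,\ldots,\alpha_m)$. Your rewriting of the interiority condition as the finite system of strict linear inequalities $(\star)$ simply makes explicit the perturbation-stability that the paper asserts in one line.
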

\begin{proof}
Polytope $K\in\kno$ is a solution if and only if $u_j\in\mathring{\balpha_K(v_{f_K(j)})}$. The latter allows for small perturbations of $(\alpha_1,\ldots\alpha_m)$. 
\end{proof}

\begin{proof}[\textbf{Theorem \ref{main}. Second part. Nonunique maximizer.} Proof.]
	Given $\mu$ and $\lambda$, suppose there exists a solution $K$ such that $\mu=\lambda(K,\cdot)$. Let $P$ be a polytope solution guaranteed by Proposition \ref{exist polytope}. Note that the assignment function $f$ with respect to the body $P$ is a maximizer of $A(\cdot)$ by Proposition \ref{uniq discrete}. Since $u_j\in\mathring{\balpha_P(v_{f(j)})}$, for any small perturbation of $u_j$, $P$ will still be a solution to the Gauss Image problem for small perturbations of $\lambda$, by Proposition \ref{pertub}. So, in particular, $f$ will still be maximizer for small perturbations of $u_j$, again by Proposition \ref{uniq discrete}. We will arrive at a contradiction by showing that if we have more than one maximizer, then we can always perturb $u_j$ small enough so that $f$ is no longer a maximizer of $A(\cdot)$. 
	
	If $f,g$ are two maximizers of $A(\cdot)$, without loss of generality, suppose $f(1)\neq g(1)$. We have that 
	\begin{equation}
  A(f)=\sum_{j=1}^k\log{u_j v_{f(j)}}=\sum_{j=1}^k\log{u_j v_{g(j)}}=A(g) .
\end{equation}
Since $f(1)\neq g(1)$, we can perturb $u_1$ sufficiently small to decrease $\log(\frac{u_1v_{f(1)}}{u_1v_{g(1)}})$, since $u_1v_{f(1)}>0$ and $u_1v_{g(1)}>0$. Thus, the above equality changes to inequality since all other terms are held constant. Therefore, for this perturbation, $f$ is no longer a maximizer of $A(\cdot)$. 
\end{proof}

\section{Alterntative Approach}\label{Alternative Approach}

Since we are working with polytopes, one might wonder whether there exists a more algebraic approach that does not rely on the smoothing of measures. Indeed, such an approach exists. The advantage of this direction is a simple algebraic combinatorial system that does not rely on the smoothing argument. We will not pursue this approach in its full generality, as it does not yield a better result. Yet, we believe this technique is important to be stated.

Rockafellar first characterized the set of subdifferentials of convex functions. He proved that a set is cyclically monotone if and only if it is a subgradient of a convex funtion (see \cite{Rock}). Later, a generalization of the cyclic monotonicity was introduced by Rochet and R\"{u}schendorf (see \cite{rush,Rochet}), in the context of mathematical economics and mass transport. More recently, Artstein-Avidan, Sadovsky, and Wyczesany, in \cite{Kasia}, generalized cyclic monotoic sets even further to what is called c-path-boundedess. The approach we will present is based on similar ideas. 

Here, we consider the most simple case of the Discrete Gauss Image problem: the same number of equal mass atoms for $\lambda$ and $\mu$. Let \begin{equation}
  \lambda=\sum\limits_{j=1}^m \delta_{u_j} \text{ and } \mu=\sum\limits_{j=1}^m \delta_{v_j}.
\end{equation}

\begin{lem}\label{convenience lem}
	Suppose measures $\mu$ and $\lambda$ are discrete, equal-weight, and weak Aleksandrov related. Suppose $\mu$ is not concentrated on a closed hemisphere. Then we can reorder indices so that the identity permutation maximizes $A(\cdot)$. In this case, $u_jv_j>0$ for all $j$.
\end{lem}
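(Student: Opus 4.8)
The plan is to reduce everything to the observation that, in this equal-weight/equal-weight regime, the set $\mathbb{F}_{\mu,\lambda}$ of admissible assignment functions is exactly the set of permutations of $\{1,\dots,m\}$, after which the lemma becomes a short finite optimization plus a relabeling. Since $\mu$ and $\lambda$ are weak Aleksandrov related we have $\mu(\sn)=\lambda(\sn)$, and as both are equal-weight this forces them to have the same number $m$ of atoms. The defining constraint $\sum_{j\in f^{-1}(i)}\lambda_j=\mu_i$ then reads $|f^{-1}(i)|=1$ for every $i$, so each $f\in\mathbb{F}_{\mu,\lambda}$ is a bijection of $\{1,\dots,m\}$. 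In particular $\mathbb{F}_{\mu,\lambda}$ is finite and nonempty, so $A(\cdot)$ attains its maximum at some permutation $f^*$.

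Next I would rule out that this maximum equals $-\infty$. Here I invoke Lemma \ref{Hall}: the weak Aleksandrov relation gives $\mathbb{F}_p\neq\emps$, i.e.\ there is a permutation $\sigma$ with $u_jv_{\sigma(j)}>0$ for all $j$, and hence $A(\sigma)>-\infty$. Therefore $A(f^*)\geq A(\sigma)>-\infty$. Using the remark (noted right after the definition of $A$) that $f\in\mathbb{F}_p$ if and only if $A(f)>-\infty$, the maximizer $f^*$ lies in $\mathbb{F}_p$; that is, $u_jv_{f^*(j)}>0$ for every $j$.

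Finally I would relabel. Replacing the atoms $v_i$ of $\mu$ by $\tilde v_i:=v_{f^*(i)}$ turns the assignment functional into $A'(g)=\sum_j\log(u_j\tilde v_{g(j)})=A(f^*\circ g)$; as $g$ ranges over all permutations so does $f^*\circ g$, so the maximal value is unchanged and is now attained at $g=\mathrm{id}$, with value $A(f^*)$. After this reordering the identity permutation maximizes $A(\cdot)$, and $u_j\tilde v_j=u_jv_{f^*(j)}>0$ for all $j$, as claimed.

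There is no genuinely hard step here: the content is entirely in recognizing that $\mathbb{F}_{\mu,\lambda}$ reduces to permutations and that Lemma \ref{Hall} supplies a finite-value competitor, which guarantees the maximizer is proper. The only point deserving care is confirming that the maximum is not attained solely at improper permutations (those with some $u_jv_{f(j)}\leq 0$), which is exactly what the comparison $A(f^*)\geq A(\sigma)>-\infty$ secures.
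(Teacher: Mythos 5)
Your proposal is correct and takes essentially the same route as the paper: the paper's entire proof is ``This immediately follows from Lemma \ref{Hall},'' and your argument is exactly the fleshed-out version of that one-liner --- identify $\mathbb{F}_{\mu,\lambda}$ with the permutations, use Lemma \ref{Hall} to produce a proper competitor so the finite maximum satisfies $A(f^*)>-\infty$, and relabel so the maximizer becomes the identity. The only difference is that you spell out the finite-optimization and relabeling steps the paper leaves implicit.
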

\begin{proof}
This immediately follows from Lemma \ref{Hall}.	
\end{proof}

For the rest of this section, we will fix the indexing so that the identity permutation maximizes $A(\cdot)$. This is guaranteed by Lemma \ref{convenience lem}. The next Lemma provides the algebraic system that is equivalent to the existence of the solution to the Gauss Image problem. 

\begin{lem}\label{equation comb}
Suppose measures $\mu$ and $\lambda$ are discrete, equal-weight, and weak Aleksandrov related. Suppose $\mu$ is not concentrated on a closed hemisphere. Consider any polytope $P\in\po_\mu$ \begin{equation}
  P=(\bigcap_{i=1}^mH^-(1/\alpha_i,v_i))^* ,
\end{equation}
where $\alpha_i>0$. Consider the following system of equations
\begin{equation}\label{equation system}
  a_{j,i}<x_j-x_i \text{ for } i\neq j\in\{1\ldots m\} \\,
\end{equation}
where $a_{j,i}=\log\frac{u_jv_i}{u_jv_j}$ if $\frac{u_jv_i}{u_jv_j}>0$, and  $a_{i,j}=-\infty$ otherwise. We define $a_{i,i}=0$ for convenience. Then $\mu=\lambda(P,\cdot)$ if and only if $x_i=\log(\alpha_i)$ solves this system of equations \eqref{equation system}.
\end{lem}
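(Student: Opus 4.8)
My plan is to reduce the algebraic system \eqref{equation system} to a geometric condition on the single polytope $P$, and then to read the measure equality directly off that geometry. The starting point is formula \eqref{eq5.1} from the proof of Proposition \ref{est}: for $P^*=\bigcap_{i=1}^m H^-(1/\alpha_i,v_i)$ one has $\log\rho_{P^*}(u_j)=\min_i\log\frac{1}{\alpha_i v_i u_j}$, with a term read as $+\infty$ when $v_iu_j\le 0$. Since $t\mapsto 1/t$ is decreasing on $(0,\infty)$, this minimum is attained at the index maximizing $\alpha_i v_iu_j$, and the boundary point $\rho_{P^*}(u_j)u_j$ lies in the relative interior of the facet of $P^*$ with outer normal $v_i$ exactly when that maximizer is unique and equals $i$. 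Invoking the standard polar duality between the facet of $P^*$ with normal $v_i$ and the vertex $r_P(v_i)=\alpha_i v_i$ of $P$, this is equivalent to $u_j\in\interior{\balpha_P(v_i)}$. Plugging in $x_i=\log\alpha_i$, the inequality $a_{j,i}<x_j-x_i$ rearranges (using $u_jv_j>0$ from Lemma \ref{convenience lem}) to $\alpha_i v_iu_j<\alpha_j v_ju_j$, while for $u_jv_i\le 0$ the convention $a_{j,i}=-\infty$ makes it automatic. Thus the first step establishes that $x_i=\log\alpha_i$ solves \eqref{equation system} if and only if, for every $j$, the number $\alpha_i v_iu_j$ is uniquely maximized over $i$ at $i=j$, that is, $u_j\in\interior{\balpha_P(v_j)}$ for all $j$.

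Granting this translation, the reverse implication is the easy one. Assuming the system holds, each $u_j$ is an outer normal of $P$ only at the vertex $\alpha_j v_j$, so the face $H_P(u_j)\cap P$ equals $\{\alpha_j v_j\}$ and $r_P^{-1}(H_P(u_j)\cap P)=\{v_j\}$. Hence $u_j\in\balpha_P(v_i)$ iff $i=j$, and the definition of the Gauss image measure yields $\lambda(P,\{v_i\})=\lambda(\balpha_P(v_i))=1=\mu_i$ for each $i$, with no mass landing off $\{v_1,\dots,v_m\}$; therefore $\mu=\lambda(P,\cdot)$.

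For the forward implication I would argue as follows. If $\mu=\lambda(P,\cdot)$, finiteness of the pushforward forces each $u_j$ into the interior of some normal cone (a $u_j$ on the boundary of a cone would be normal along a positive-length face, spreading its mass over an arc), so there is a map $\sigma$ with $u_j\in\interior{\balpha_P(v_{\sigma(j)})}$, and matching the unit masses $\mu_i=1$ makes $\sigma$ a permutation. By the first step $\sigma(j)$ is the unique maximizer of $\alpha_i v_iu_j$ over $i$, so $\alpha_{\sigma(j)}v_{\sigma(j)}u_j\ge\alpha_j v_ju_j>0$ with equality iff $\sigma(j)=j$. Taking logarithms, summing over $j$, and cancelling $\sum_i\log\alpha_i$ (valid since $\sigma$ is a permutation) gives $A(\sigma)\ge A(\mathrm{id})$, strictly if $\sigma\ne\mathrm{id}$. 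Because Lemma \ref{convenience lem} makes the identity a maximizer of $A(\cdot)$, this forces $\sigma=\mathrm{id}$, i.e.\ $u_j\in\interior{\balpha_P(v_j)}$ for all $j$, which is exactly the system being solved by $x_i=\log\alpha_i$.

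The main obstacle is precisely this last point: a priori $\mu=\lambda(P,\cdot)$ might be realized through a nontrivial permutation $\sigma$, and one has to rule that out to recover the identity assignment encoded in \eqref{equation system}. I handle it by comparing $A(\sigma)$ with $A(\mathrm{id})$ through the column-maximization property of $\sigma$, which lets me conclude $\sigma=\mathrm{id}$ without appealing to the uniqueness of the maximizer from Theorem \ref{main}. Secondary care is needed with the $-\infty$ convention when $u_jv_i\le 0$ and with the interior-versus-boundary dichotomy that guarantees the Gauss image measure stays finite and purely atomic on $\{v_i\}$.
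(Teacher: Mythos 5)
Your proof is correct, and its core is the same as the paper's: using $\rho_{P^*}(u_j)=\min_i\frac{1}{\alpha_i v_iu_j}$ and polarity to translate the system \eqref{equation system}, evaluated at $x_i=\log\alpha_i$, into the statement that for each $j$ the quantity $\alpha_iv_iu_j$ is uniquely maximized over $i$ at $i=j$, i.e.\ $u_j\in\interior{\balpha_P(v_j)}$. Where you go beyond the paper is the forward implication. The paper's proof simply asserts that $\mu=\lambda(P,\cdot)$ holds if and only if every $u_j$ lies strictly inside the normal cone at $v_j$; but a priori the measure equality could be realized through a nontrivial permutation $\sigma$, with $u_j\in\interior{\balpha_P(v_{\sigma(j)})}$, in which case the system \eqref{equation system} (which hard-codes the identity assignment) would fail --- indeed the lemma would be false without the section's standing convention that the indexing makes the identity a maximizer of $A(\cdot)$ (Lemma \ref{convenience lem}). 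Your argument closes exactly this point: unique maximization at $i=\sigma(j)$ gives $\alpha_{\sigma(j)}v_{\sigma(j)}u_j\geq\alpha_jv_ju_j>0$ with strictness when $\sigma(j)\neq j$; taking logarithms, summing over $j$, and cancelling $\sum_i\log\alpha_i$ (legitimate since $\sigma$ is a permutation and both $A(\sigma)$ and $A(\mathrm{id})$ are finite) yields $A(\sigma)>A(\mathrm{id})$ for $\sigma\neq\mathrm{id}$, contradicting the convention. Notably, this is self-contained: it invokes neither Theorem \ref{main} nor Proposition \ref{uniq discrete}, which matters because this section is intended as a proof independent of the smoothing argument. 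So you follow the same route as the paper, but you identify and properly justify the one step the paper's write-up glosses over.
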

\begin{proof}
Fix $P$, and consider its dual $P^*$. Since $\frac{1}{\alpha_iv_iu_j}$ is the distance from the center to the intersection between a ray in the $u_j$ direction and the hyperplane $H(1/\alpha_i,v_i)$, we obtain that: \begin{equation}
  \log\rho_{P^*}(u_j)=\min_{i=1}^m\log(\frac{1}{\alpha_iv_iu_j}).
\end{equation}
In the last equation, we assume that $\log(\frac{1}{x})=+\infty$ if $x<0$. On the other hand, $u_j\in\interior{\alpha_P(v_j)}$ if and only if the ray in the $u_j$ direction intersects $H(1/\alpha_j,v_j)$ strictly before it intersects any other $H(1/\alpha_i,v_i)$ for $j\neq i$. Therefore, $\mu=\lambda(P,\cdot)$ if and only if for all $j\neq i$, the following strict inequality holds:
\begin{equation}
  \log(\frac{1}{\alpha_jv_ju_j})<\log(\frac{1}{\alpha_iv_iu_j}),
\end{equation}
again assuming that the right side is $+\infty$ if $\alpha_iv_iu_j<0$. Recalling that $\alpha_j,\alpha_i,v_ju_j>0$ and assuming that  the $\log$ of a non-positive value is $-\infty$, the above is equivalent to:
\begin{equation}
\log\frac{u_jv_i}{u_jv_j}<\log\alpha_j-\log\alpha_i.	
\end{equation}
Defining $x_k$ to be $\log\alpha_k$, we arrive at the conclusion.
\end{proof}

The conclusion to the following lemma is the analogue to the cyclic monotonicity condition.
It gives us a natural condition on the coefficients in the above lemma.

\begin{lem}\label{condition on system}
Suppose measures $\mu$ and $\lambda$ are discrete, equal-weight, and weak Aleksandrov related. Suppose $\mu$ is not concentrated on a closed hemisphere. Suppose the identity permutation maximizes the assignment functional $A(\cdot)$. Then the maximizer is unique if and only if for any non-trivial permutation $\sigma$ on $\{1\ldots m\}$, \begin{equation}
  a_{\sigma}:=\sum_{i=1}^{m}a_{i,\sigma(i)}<0,
\end{equation}
where coefficients $a_{i,j}$ are given by Lemma \ref{equation comb}.
\end{lem}
\begin{proof}
Let Id denote the identity permutation. Given any nontrivial permutation $\sigma$, \begin{equation}
\begin{split}
  A(\sigma)<A(\text{Id}) \Leftrightarrow \\
  \sum_{j=1}^m \log(u_jv_{\sigma(j)})< \sum_{j=1}^m \log(u_jv_j) \Leftrightarrow \\
  a_\sigma= \sum_{j=1}^{m}a_{j,\sigma(j)}= \sum_{j=1}^m \log(\frac {u_jv_{\sigma(j)}} {u_jv_j} )<0,
  \end{split}
\end{equation}
again assuming that $\log x = -\infty$ if $x<0$ or the expression inside is undefined.

 \end{proof}

We will show that the system established in Lemma \ref{equation comb} has a solution if the coefficients $a_{i,j}$ satisfy the condition of Lemma \ref{condition on system}. This provides an alternative proof for the Discrete Equal-Weight Gauss Image problem. The system with the non-strict inequality already arose in the work of K. Wyczesany \cite{Wyczesany}. The proof is based on induction and Helly's theorem. We refer the reader to \cite{Wyczesany} for details of the proof.

\begin{lem}[K. Wyczesany, Lemma 4.2.8 in \cite{Wyczesany}]\label{Kasia}
Let $\alpha_{i,j}\geq -\infty$ for $i\neq j \in \{1\ldots m\}$, and let $a_{i,j}=0$ for $i=j$. Then the following system of equations 	\begin{equation}
  a_{j,i}\leq x_j-x_i \text{ for } i\neq j\in\{1\ldots m\} \\
\end{equation}
has a real solution if and only if for any permutation $\sigma$,
\begin{equation}
	\sum_{i=1}^{m}a_{i,\sigma(i)}\leq0.
\end{equation}
\end{lem}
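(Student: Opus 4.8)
The statement to prove is Lemma~\ref{Kasia}: the system $a_{j,i}\leq x_j-x_i$ (for $i\neq j$, with $a_{i,i}=0$) has a real solution if and only if $\sum_{i=1}^{m}a_{i,\sigma(i)}\leq 0$ for every permutation $\sigma$. Since this is attributed to Wyczesany and is quoted from that reference, my plan is to reconstruct the natural graph-theoretic proof rather than follow the paper's induction-plus-Helly route. The cleanest approach is to recognize this as a statement about the existence of potentials $x_i$ for a weighted complete digraph: define edge weights $w(j\to i)=a_{j,i}$ on the vertex set $\{1,\ldots,m\}$. A solution $x$ exists precisely when every edge weight is dominated by the potential difference $x_j-x_i$, which is the classical feasibility condition for a system of difference constraints.

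The plan for the \emph{necessity} direction is immediate: if $x$ solves the system, then summing $a_{i,\sigma(i)}\leq x_i-x_{\sigma(i)}$ over $i=1,\ldots,m$ for any permutation $\sigma$ telescopes, because $\sigma$ permutes the indices and so $\sum_i (x_i-x_{\sigma(i)})=0$. This yields $\sum_i a_{i,\sigma(i)}\leq 0$. One should be slightly careful that $a_{i,j}=-\infty$ is permitted: such terms only make the inequality easier, so necessity is unaffected.

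For the \emph{sufficiency} direction I would argue as follows. Build the weighted digraph above and consider, for each ordered pair $(i,j)$, the supremum of $\sum a_{p\to q}$ taken over all directed paths from $i$ to $j$. Call this $W(i,j)$. The key observation is that the permutation condition $\sum_i a_{i,\sigma(i)}\leq 0$ is equivalent to the absence of any positive-weight directed cycle: any cycle can be completed to a permutation by fixing the remaining vertices (which contribute $a_{p,p}=0$), so a positive cycle would violate the hypothesis. Absence of positive cycles guarantees that the path-supremum $W(i,j)$ is finite and satisfies the triangle-type inequality $W(i,k)\geq W(i,j)+a_{j,k}$, hence $W(i,j)+a_{j,i}\leq 0$. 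I would then fix a base vertex, say vertex $1$, and define the potential $x_i:=W(i,1)$ (or $-W(1,i)$). One checks directly that $a_{j,i}\leq x_j-x_i$ reduces, after unwinding the definition of $W$ as a longest-path potential, to the triangle inequality just established, so this $x$ solves the system.

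The main obstacle is handling the $-\infty$ entries rigorously, since the digraph is not strongly connected once edges of weight $-\infty$ are effectively deleted. The correct fix is to restrict the path-supremum to paths using only finite-weight edges and to verify that every index remains reachable in the relevant sense; here the hypothesis that $\mu$ is weak Aleksandrov related (equivalently, Hall's condition via Lemma~\ref{Hall}, giving the identity as a valid maximizer with all $u_jv_j>0$) guarantees enough finite edges that the potentials are well-defined. An alternative, perhaps cleaner, route that sidesteps reachability is to perturb all $-\infty$ entries to a large negative finite constant $-M$, apply the finite-weight argument, and pass to the limit $M\to\infty$; since the feasible region is defined by finitely many inequalities and the no-positive-cycle condition is stable, the limiting $x$ solves the original system. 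Either way, the real content is the longest-path potential construction, and I expect the bookkeeping around $-\infty$ weights to be the only genuinely delicate point.
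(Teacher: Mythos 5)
The paper itself never proves Lemma \ref{Kasia}: it quotes the result from Wyczesany's thesis and only remarks that the proof there proceeds by induction together with Helly's theorem. So your reconstruction is necessarily a different route, and its skeleton is the correct, classical one for systems of difference constraints: necessity by telescoping $a_{i,\sigma(i)}\le x_i-x_{\sigma(i)}$ over a permutation; the observation that the permutation condition is equivalent to the absence of positive-weight directed cycles (extend a cycle to a permutation by fixed points, which cost $a_{i,i}=0$, and conversely decompose a permutation into disjoint cycles); and sufficiency via the longest-path potential $x_i:=W(i,1)$, where the required inequality $a_{j,i}\le W(j,1)-W(i,1)$ is exactly the concatenation inequality for walks, and finiteness of $W$ holds because, absent positive cycles, every walk is dominated by a simple path. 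This is more elementary than induction plus Helly, and it buys a constructive potential; the price is the bookkeeping you flag around $-\infty$ entries.

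That bookkeeping is where your proposal has a genuine error in its primary branch. You appeal to the weak Aleksandrov relation and Lemma \ref{Hall} to guarantee that enough finite-weight edges exist for the potentials to be well defined. But Lemma \ref{Kasia} is a free-standing combinatorial statement about arbitrary coefficients $a_{i,j}\ge-\infty$; there are no measures, no vectors $u_j,v_i$, and no Aleksandrov-type hypothesis anywhere in it, so this appeal is vacuous. The lemma must hold, for instance, when every off-diagonal entry is $-\infty$, in which case no reachability whatsoever is available, and yet $x=0$ is a solution.

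Your fallback branch (truncate $-\infty$ to $-M$) is the right fix, but as stated it is also not quite sound: you pass to the limit $M\to\infty$, and nothing guarantees that the solutions $x^{(M)}$ converge, or even remain bounded. The repair is that no limit is needed. Let $C$ be the maximum of $|a_{i,j}|$ over the finite entries (take $C=0$ if there are none) and fix a single $M>mC$. Every directed cycle that uses a truncated edge then has weight at most $-M+(m-1)C<0$, while cycles avoiding truncated edges are unchanged; hence the truncated, all-finite system satisfies the no-positive-cycle condition, and your finite-weight construction produces a solution $x$. This same $x$ solves the original system verbatim: each constraint with $a_{j,i}=-\infty$ is vacuous, and all other constraints are identical. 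With that substitution for your last paragraph, the proof is complete.
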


It is not hard to see that the above lemma provides us with a similar lemma for the strict system:

\begin{lem}\label{system strict}
Let $\alpha_{i,j}\geq -\infty$ for $i\neq j \in \{1\ldots m\}$ and let $a_{i,j}=0$ for $i=j$. Then the following system of equations: 	\begin{equation}
  a_{j,i}<x_j-x_i \text{ for } i\neq j\in\{1\ldots m\} \\
\end{equation}
has a real solution if and only if for any non trivial permutation $\sigma$,
\begin{equation}\label{condition on a}
  a_{\sigma}:=\sum_{i=1}^{m}a_{i,\sigma(i)}<0.
\end{equation}

\end{lem}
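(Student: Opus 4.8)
The plan is to derive Lemma~\ref{system strict} from the already-established non-strict version (Lemma~\ref{Kasia}) by a perturbation argument, handling both directions of the equivalence. The necessity direction is immediate: if the strict system $a_{j,i} < x_j - x_i$ has a real solution $(x_1,\ldots,x_m)$, then for any non-trivial permutation $\sigma$ we sum the strict inequalities over the index set where $\sigma(i)\neq i$. For each such $i$ we have $a_{i,\sigma(i)} < x_i - x_{\sigma(i)}$, and for the fixed points $\sigma(i)=i$ we have $a_{i,i}=0 = x_i - x_{\sigma(i)}$. Adding all $m$ inequalities, the right-hand side telescopes to $\sum_i (x_i - x_{\sigma(i)}) = 0$ because $\sigma$ is a bijection, so $a_\sigma = \sum_i a_{i,\sigma(i)} < 0$. (The strictness is preserved because at least one inequality is strict, since $\sigma$ is non-trivial.)

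For the sufficiency direction, suppose that $a_\sigma < 0$ for every non-trivial permutation $\sigma$. I want to produce a solution to the strict system. The idea is to shift the coefficients upward by a small uniform amount and apply Lemma~\ref{Kasia}. Concretely, let $\delta > 0$ and define $a_{j,i}' := a_{j,i} + \delta$ for $i \neq j$, keeping $a_{i,i}'=0$. For any non-trivial $\sigma$, if $\sigma$ moves exactly $t \geq 2$ indices (a non-trivial permutation moves at least two), then $a_\sigma' = a_\sigma + t\delta \leq a_\sigma + m\delta$. Since there are only finitely many permutations and each non-trivial one satisfies $a_\sigma < 0$, I can choose $\delta$ small enough that $a_\sigma + m\delta \leq 0$ simultaneously for all non-trivial $\sigma$; for the trivial permutation $a_{\mathrm{Id}}' = 0 \leq 0$ automatically. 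Then the perturbed coefficients $a'_{j,i}$ satisfy the hypothesis of Lemma~\ref{Kasia}, so the non-strict system $a_{j,i}' \leq x_j - x_i$ has a real solution $(x_1,\ldots,x_m)$. Unwinding, $a_{j,i} + \delta \leq x_j - x_i$ gives $a_{j,i} < x_j - x_i$ for all $i \neq j$, which is exactly a solution of the strict system.

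The only subtlety to check is that when $a_{j,i} = -\infty$ the strict inequality $a_{j,i} < x_j - x_i$ holds trivially for any finite $x_j, x_i$, so those constraints impose nothing and the perturbation argument is unaffected (one simply leaves $a'_{j,i} = -\infty$ there, and such terms contribute $-\infty$ to any $a_\sigma$, keeping $a_\sigma < 0$ vacuously).

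I do not expect a genuine obstacle here, since Lemma~\ref{Kasia} does all the structural work; the entire content is the elementary observation that a uniform upward shift of the off-diagonal coefficients by $\delta$ increases each $a_\sigma$ (for non-trivial $\sigma$) by at most $m\delta$, so the finitely many strict inequalities $a_\sigma < 0$ provide exactly the room needed to absorb the shift and convert strict conclusions into non-strict hypotheses. The main point requiring care is simply ensuring the single $\delta$ works uniformly across all permutations, which follows from finiteness of the symmetric group.
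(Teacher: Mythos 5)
Your proof is correct and follows essentially the same route as the paper: both directions match, with the necessity part obtained by summing the strict inequalities and telescoping, and the sufficiency part obtained by uniformly shifting the off-diagonal coefficients by a small amount (chosen via finiteness of the symmetric group), applying Lemma~\ref{Kasia} to the shifted non-strict system, and observing that its solution solves the original strict system. Your explicit handling of the $-\infty$ entries is a minor point the paper leaves implicit, but the argument is the same.
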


\begin{rmk}
One can prove this directly, with a similar approach to the proof Lemma \ref{Kasia}.
\end{rmk}
\begin{proof}
Assume the system of equations has a solution. Then for any non-identical permutation $\sigma$,

\begin{equation}
\begin{split}
  &a_{\sigma}=\sum_{i=1}^{m}a_{i,\sigma(i)}=\sum_{\sigma(i)\neq i}^{m}a_{i,\sigma(i)}+\sum_{\sigma(i)= i}^{m}a_{i,i}=\\&\sum_{\sigma(i)\neq i}^{m}a_{i,\sigma(i)}<\sum_{\sigma(i)\neq i}^{m}(x_i-x_{\sigma(i)})=0,
  \end{split}
  \end{equation}
where the last equality follows by simply opening the sum and canceling terms.

For the other direction, assume the conditions on coefficients $\alpha_{i,j}$. Then for any $\sigma$ not equal to the identity permutation,
\begin{equation}
	\sum_{i=1}^{m}a_{i,\sigma(i)}<0.
\end{equation}
Because there are finitely many permutations, we can choose uniform $\eps$ such that for each $\sigma$ not equal to the identity permutation,
\begin{equation}
	\sum_{i=1}^{m}a_{i,\sigma(i)}+\eps<0.
\end{equation}
Choose new coefficients $\bar a_{i,j}=a_{i,j}+\frac \eps m$ if $i\neq j$, and $\bar a_{i,j}=a_{i,j}=0$ otherwise. These coefficients satisfy the condition of Lemma \ref{Kasia}. Hence, there is a solution to the following system of equations: 
\begin{equation}
  \bar a_{j,i}\leq x_j-x_i \text{ for } i\neq j\in\{1\ldots m\} \\,
\end{equation}
since $\alpha_{i,j}<\bar\alpha_{i,j}$ for $i\neq j$. The same solution solves the original system. 
\end{proof}

Now we give an alternative proof of the discrete equal-weight problem.

\begin{thm}
Suppose $\mu$ and $\lambda$ are discrete equal-weight measures that satisfy the  weak Aleksandrov inequality. Suppose $\mu$ is not concentrated on a closed hemisphere. Then $A(\cdot)$ is maximized by exactly one $\sigma\in\mathbb F$ if and only if  there exists a polytope $P$ with vertices $r_P(v_i)$ solving $\mu=\lambda(P,\cdot)$.
\end{thm}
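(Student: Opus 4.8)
The plan is to prove both equivalences by chaining together the three preceding lemmas, which were engineered precisely so that the combinatorial uniqueness statement, the solvability of a strict system of difference inequalities, and the existence of a polytope solution all reduce to one common condition on the coefficients $a_{i,j}$ of Lemma~\ref{equation comb}. First I would normalize via Lemma~\ref{convenience lem}: after reordering indices we may assume the identity permutation maximizes $A(\cdot)$ and that $u_j v_j > 0$ for every $j$. Since relabeling the atoms of $\lambda$ induces a bijection of $\mathbb{F}$ preserving the multiset of values of $A(\cdot)$, it changes neither whether the maximizer is unique nor whether a polytope solution exists, so this reduction is harmless. Throughout, $\mathbb{F}$ is the set of permutations of $\{1,\dots,m\}$, because both measures are equal-weight with the same total mass.

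With this normalization in hand, I would run the following chain of equivalences. The functional $A(\cdot)$ is maximized by exactly one $\sigma \in \mathbb{F}$ if and only if, by Lemma~\ref{condition on system}, every non-trivial permutation $\sigma$ satisfies $a_\sigma = \sum_{i=1}^m a_{i,\sigma(i)} < 0$. By Lemma~\ref{system strict}, this condition holds if and only if the strict system $a_{j,i} < x_j - x_i$ (for $i \neq j$) admits a real solution $(x_1,\dots,x_m)$. Finally, given such a solution I would set $\alpha_i := e^{x_i} > 0$ and form $P := (\bigcap_{i=1}^m H^-(1/\alpha_i, v_i))^* \in \po_\mu$; Lemma~\ref{equation comb} then yields $\mu = \lambda(P,\cdot)$. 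Conversely, any $P \in \po_\mu$ with $\mu = \lambda(P,\cdot)$ has the form $(\bigcap_i H^-(1/\alpha_i,v_i))^*$ with $\alpha_i>0$, and Lemma~\ref{equation comb} shows that $x_i := \log\alpha_i$ solves the same strict system, so solvability of the system is equivalent to the existence of a polytope solution. Composing these equivalences gives exactly the stated biconditional.

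Since all the genuine work has been delegated to the earlier lemmas, the remaining task is essentially bookkeeping, and the point I would be most careful about is consistency of the $-\infty$ conventions: the coefficient $a_{j,i} = \log\frac{u_j v_i}{u_j v_j}$ is finite only when $u_j v_i > 0$, and it is exactly the normalization $u_j v_j > 0$ from Lemma~\ref{convenience lem} that rules out division by zero and guarantees that $a_\sigma$ never mixes $+\infty$ with $-\infty$. I expect the only real subtlety to be checking that the reduction via Lemma~\ref{convenience lem} and the passage $\alpha_i \leftrightarrow x_i = \log\alpha_i$ are compatible across all three lemmas, i.e. that the index set on which the strict system is posed, the coefficients, and the permutation condition match verbatim. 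The heavy lifting, namely Lemma~\ref{equation comb}'s translation of the geometric interiority condition $u_j \in \interior{\alpha_P(v_j)}$ into strict linear inequalities and the strict Hall-type solvability criterion of Lemma~\ref{system strict}, is already established, so no further analytic or geometric input is needed.
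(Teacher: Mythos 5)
Your proposal is correct and follows essentially the same route as the paper's own proof: normalize with Lemma~\ref{convenience lem} so the identity permutation maximizes $A(\cdot)$, then chain Lemma~\ref{condition on system}, Lemma~\ref{system strict}, and Lemma~\ref{equation comb} (via $\alpha_i = e^{x_i}$) to get the stated biconditional. Your added care about the $-\infty$ conventions and the harmlessness of the relabeling is sound but matches what the paper leaves implicit.
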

\begin{proof}
	By Lemma \ref{convenience lem}, one can order the coefficients so that the identity permutation maximizes the functional $A(\cdot)$. Then by Lemma \ref{equation comb}, there exists a $P$ solving $\mu=\lambda(P,\cdot)$ if and only if there is a solution to the system of equations \eqref{equation system}. By Lemma \ref{system strict}, the system of equations \eqref{equation system} has a solution if and only if condition \eqref{condition on a} is satisfied for $\alpha_{i,j}$. The condition \eqref{condition on a} is equivalent to the unique maximization of the assignment functional, by Lemma \ref{condition on system}.
\end{proof}

\section{The Assignment Problem from Geometric Point of View}\label{The Assignment Problem from Geometric Point of View}

As was shown by Theorem \ref{main} the question of the existence of the solution to the Discrete Gauss Image problem is equivalent to the uniqueness of the maximizer for the assignment functional. Let us now analyze this question. In this section, we will provide a geometric condition that insures that the maximizer is unique, and, hence, there exists a solution to the Discrete Gauss Image Problem. This condition is equivalent to the statement that if $\sigma_1\neq \sigma_2 \in \mathbb F_{\mu,\lambda}$ then $A(\sigma_1)=A(\sigma_2)$, which forces $A(\cdot)$ to be uniquely maximized. This can be seen to be not far away from necessary, yet for the true necessary condition, one has to restrict the class of permutations $\mathbb F_{\mu,\lambda}$ to consider. In the next section, we are going to analyze the uniqueness of the maximizer from the generic point of few.  

It turns out that the question about the uniqueness of maximizers of the assignment functional for the dimension $n=2$ is very easy to understand. Consider the following example. Let $P$ be any regular convex polytope with the center in its interior. Choose set $\{v_1\ldots v_m\}$ of distinct unit vectors such that $r_P(v_i)$ are all vertices of $P$. Since $n=2$ we can also ensure that vector subscript notation is clock-wise ordered. Now let the set $\{u_1\ldots u_m\}$ be normals to the facets of $P$ such that $u_j\bot [r_P(v_j),r_P(v_{j+1})]$ for $j<m$ and $u_m\bot [r_P(v_m),r_P(v_{1})]$. Let \begin{equation}
  \lambda=\sum\limits_{j=1}^m \delta_{u_j} \text{ and } \mu=\sum\limits_{j=1}^m \delta_{v_j}.
\end{equation}

These two measures are weak Aleksandrov related and $\mu$ is not concentrated on a closed hemisphere, hence satisfying conditions of Theorem \ref{main}. We will now prove that $A(\cdot)$ is maximized at exactly two permutations from $\mathbb{F}_{\mu,\lambda}$: the identity permutation $\sigma_{Id}$ and a cycle $\sigma_{s}=(1\ldots m)$. First, a similar argument to Proposition \ref{necessary weak} shows that $\mu$ and $\lambda$ are weak Aleksandrov related. Clearly, $\mu$ is not concentrated on a closed hemisphere. By a simple computation of $\Phi(P,\mu,\lambda)$ analogues to derivation of Proposition \ref{est} and equations \eqref{discrete eq} and \eqref{est 5.5} one can show that for any permutation $\sigma\neq \sigma_s$ and $\sigma \neq \sigma_{Id}$:

	\begin{equation}
		 -A(\sigma_{id})=-A(\sigma_{s})=\Phi(P,\mu,\lambda)<-A(\sigma)
	\end{equation}
	Combining all these Theorem \ref{main} gives us that there doesn't exist a solution $K\in \kno$ such that $\mu=\lambda(K,\cdot)$. It is interesting to note that the solution $K_\eps$ for $\lambda_\eps$,$\mu$-problem from Lemma \ref{convergence} converge to $P$, so in particular, the method established in the Existence and Uniqueness section of this work identifies the body $P$ from which measures were constructed.

This example is the essence of the Assignment Problem. In dimension two if one starts with discrete equal-weight $\mu$ and $\lambda$ satisfying weak Aleksandrov condition such that $\mu$ is not concentrated on a closed hemisphere we either obtain that there exist polytope $P\in\po_\mu$ such that $\mu=\lambda(P,\cdot)$ or there was a polytope $P\in\po_\mu$ such that $\{u_1\ldots u_m\}$ where normals to its facets. The answer depends on the uniqueness of the maximizer. Moreover, the failure of uniqueness precisely determines $P\in\po_\mu$ (up to scaling) and the assignment of $u_j$ to specific side of $P$ in the sense of  $u_j\bot [r_P(v_i),r_P(v_{i+1})]$. 

Now, let's analyze the higher dimensional picture while additionally dropping the equal-weight assumption for $\mu$. First, we define a geometric condition which is related to the uniqueness of the maximizer. This condition comes from "lifting" the mentioned two-dimensional example into higher dimensions. 


\begin{defi}
	Given set of distinct vectors $\{u_1\ldots u_l\}$ and distinct vectors $\{v_1\ldots v_l\}$ with $l\geq 2$ suppose there exist a piecewise linear closed curve $\gamma$ with vertices $\{x_1\ldots x_l\}$ such that 
	\begin{itemize}
		\item $x_i=\lambda_iv_i$ for some $\lambda_i>0$
		\item $u_i \bot [x_i,x_{i+1}]$ for $1\leq i \leq l-1$
		\item $u_l \bot [x_l,x_1]$ 
	\end{itemize}
	The curve $\gamma$ is called an edge-normal loop of two sets. 
\end{defi}
\begin{rmk}
	This curve doesn't always exist. If it exists it is unique up to dilation.
	
\end{rmk}

Clearly, any piecewise linear closed curve not passing through the center provides two sets $\{u_1\ldots u_l\}$ and $\{v_1\ldots v_l\}$ to which it is an edge-normal loop. Going back to the example discussed at the beginning of this section, any two dimensional polytope defines an edge-normal loop. The next two propositions establish the relation between the edge-normal loops and the values of the assignment functional for different permutations. 

\begin{prop}\label{edge normal loop}
	Suppose a piecewise linear closed curve $\gamma$ with verticies $\{x_1\ldots x_l\}$ is an edge normal loop of $\{u_1\ldots u_l\}$ and  $\{v_1\ldots v_l\}$. Then \begin{equation}
		A(\sigma_{id})=A(\sigma_{s})
	\end{equation}  
	where $\sigma_s$ is cycle permutation $(1\ldots l)$. 

\end{prop}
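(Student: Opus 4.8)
The plan is to convert each perpendicularity constraint into one scalar identity and then telescope. First I would record the edge conditions in coordinates. Writing $u_i\perp[x_i,x_{i+1}]$ as $u_i\cdot(x_{i+1}-x_i)=0$ and substituting $x_i=\lambda_i v_i$, this becomes
\begin{equation}
\lambda_i(u_iv_i)=\lambda_{i+1}(u_iv_{i+1}),\qquad 1\le i\le l-1,
\end{equation}
together with $\lambda_l(u_lv_l)=\lambda_1(u_lv_1)$ coming from the closing edge $u_l\perp[x_l,x_1]$. (It is convenient to set $v_{l+1}:=v_1$ and $x_{l+1}:=x_1$ and read all indices cyclically.) This single line carries all of the geometric content of the edge-normal loop; everything afterwards is the cancellation of the radial scalars $\lambda_i$.

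Next I would handle the sign/logarithm convention. Since every $\lambda_i>0$, each identity above forces $u_iv_i$ and $u_iv_{\sigma_s(i)}$ to have the same sign. Hence, under the convention that $\log$ of a nonpositive value is $-\infty$, the paired terms $\log(u_iv_i)$ and $\log(u_iv_{\sigma_s(i)})$ are simultaneously finite or simultaneously $-\infty$. Consequently either both $A(\sigma_{id})$ and $A(\sigma_s)$ equal $-\infty$, in which case the claimed equality is immediate, or every factor $u_iv_i$ (equivalently every $u_iv_{\sigma_s(i)}$) is strictly positive and all logarithms below are genuine real numbers.

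In the strictly positive case I would simply take logarithms of the identities to get $\log(u_iv_i)=\log(u_iv_{i+1})+\log(\lambda_{i+1}/\lambda_i)$ for $i<l$ and $\log(u_lv_l)=\log(u_lv_1)+\log(\lambda_1/\lambda_l)$, and then sum over $i$. The left-hand sides add up to $A(\sigma_{id})=\sum_{i=1}^l\log(u_iv_i)$; the leading right-hand terms add up to $\sum_{i=1}^{l-1}\log(u_iv_{i+1})+\log(u_lv_1)=A(\sigma_s)$, recalling $\sigma_s=(1\ldots l)$ sends $i\mapsto i+1$ and $l\mapsto 1$; and the correction terms telescope,
\begin{equation}
\sum_{i=1}^{l-1}\log\frac{\lambda_{i+1}}{\lambda_i}+\log\frac{\lambda_1}{\lambda_l}
=\log\frac{\lambda_l}{\lambda_1}+\log\frac{\lambda_1}{\lambda_l}=0.
\end{equation}
This yields $A(\sigma_{id})=A(\sigma_s)$, as desired. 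There is no real obstacle here: the only thing requiring care is the cyclic bookkeeping together with the sign-matching observation of the second paragraph, which guarantees that the identity remains valid (trivially, as $-\infty=-\infty$) even when the log convention degenerates.
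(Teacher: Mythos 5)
Your proof is correct and follows essentially the same route as the paper's: the perpendicularity conditions give $\lambda_i(u_iv_i)=\lambda_{i+1}(u_iv_{i+1})$, and the ratios $\lambda_{i+1}/\lambda_i$ telescope around the closed loop (the paper phrases this multiplicatively, as $\prod_{j=1}^{l}\frac{u_jv_j}{u_jv_{j+1}}=1$, rather than additively in logarithms). If anything, your treatment of the degenerate case is more careful than the paper's: the paper splits on whether $0\in\gamma$ and asserts that $0\notin\gamma$ forces all $u_jv_j>0$, whereas your sign-matching observation (both factors in each pair have the same sign since $\lambda_i>0$) also covers the possibilities that some pair is negative or zero with the origin off the curve, which the paper's dichotomy glosses over but which still harmlessly yield $-\infty=-\infty$.
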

\begin{proof}
Let $u_{l+1}=u_1$, $v_{l+1}=v_1$, $x_{l+1}=x_1$ and $\lambda_{l+1}=\lambda_1$ then by defenition:
	\begin{equation}\begin{split}
		A(\sigma_{id})=A(\sigma_{s}) &\Leftrightarrow \\ \sum_{j=1}^l \log(u_jv_{j})= \sum_{j=1}^l \log(u_jv_{j+1}) &
		\end{split}
	\end{equation} 
	If $0\in \gamma$ then $0\in [x_i,x_{i+1}]=[\lambda_iv_i,\lambda_{i+1}v_{i+1}]$ and hence $u_iv_i=0$ and $u_iv_{i+1}=0$ which forces $A(\sigma_{id})=A(\sigma_{s})=-\infty$ by convention and  establishes the above equality. Suppose $0\notin \gamma$, then for all $j$, $u_jv_j>0$ and $u_{j}v_{j+1}>0$. Hence the above equality is equivalent to: 
	\begin{equation}
		\prod_{j=1}^l \frac {u_jv_{j}} {u_jv_{j+1}} =1
	\end{equation}
	Now $u_i\bot [x_i,x_{i+1}]$ is exactly $u_i(x_i-x_{i+1})=0$ which is equivalent to $\frac{\lambda_{i+1}}{\lambda_{i}}=\frac {u_iv_{i}} {u_iv_{i+1}}$. Therefore, the above is equivalent to:
	\begin{equation}
		\prod_{j=1}^l \frac{\lambda_{i+1}}{\lambda_{i}}=1
	\end{equation}
	which holds since $\lambda_{l+1}=\lambda_1$.
	
\end{proof}

We now show the reverse statement.

\begin{prop}\label{edge normal loop reverse}
	Suppose for sets $\{u_1\ldots u_l\}$ and $\{v_1\ldots v_l\}$ 
	\begin{equation}
		A(\sigma_{id})= A(\sigma_{s})>-\infty
	\end{equation}  
	Then there exist an edge-normal loop of $\{u_1\ldots u_l\}$ and $\{v_1\ldots v_l\}$.
\end{prop}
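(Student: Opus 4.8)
The plan is to invert the chain of equivalences used in the proof of Proposition~\ref{edge normal loop}: there the existence of the loop \emph{forced} the equality $A(\sigma_{id})=A(\sigma_s)$, and here I want to run the same equivalences backward, treating the hypothesis as the closure condition for a recursively built loop. First, since $A(\sigma_{id})=A(\sigma_s)>-\infty$, every term in both sums is finite, so $u_jv_j>0$ and $u_jv_{j+1}>0$ for all $j$ (with cyclic indices $v_{l+1}=v_1$, $u_{l+1}=u_1$). Exponentiating the equality $\sum_{j=1}^l\log(u_jv_j)=\sum_{j=1}^l\log(u_jv_{j+1})$ then yields the product identity
\begin{equation}
\prod_{j=1}^l\frac{u_jv_j}{u_jv_{j+1}}=1.
\end{equation}

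Next I would reconstruct the radial scalars. Recall from the forward proof that the orthogonality $u_i\bot[x_i,x_{i+1}]$ with $x_i=\lambda_iv_i$ is equivalent to $\frac{\lambda_{i+1}}{\lambda_i}=\frac{u_iv_i}{u_iv_{i+1}}$. I therefore simply impose these ratios: set $\lambda_1=1$ and define recursively $\lambda_{i+1}:=\lambda_i\,\frac{u_iv_i}{u_iv_{i+1}}$ for $1\leq i\leq l$, so that explicitly $\lambda_i=\prod_{k=1}^{i-1}\frac{u_kv_k}{u_kv_{k+1}}$. Since every $u_kv_k$ and $u_kv_{k+1}$ is positive, each $\lambda_i>0$. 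Setting $x_i:=\lambda_iv_i$, the recursion gives $\lambda_{i+1}(u_iv_{i+1})=\lambda_i(u_iv_i)$, hence
\begin{equation}
u_i(x_i-x_{i+1})=\lambda_i(u_iv_i)-\lambda_{i+1}(u_iv_{i+1})=0,
\end{equation}
which is precisely $u_i\bot[x_i,x_{i+1}]$ for $1\leq i\leq l-1$.

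The one point requiring the full strength of the hypothesis --- and the only step I regard as a genuine obstacle --- is that the loop must close: the scalar $\lambda_{l+1}$ produced by the recursion must coincide with $\lambda_1$, so that $u_l\bot[x_l,x_1]$ holds and $\gamma$ is an honest closed curve. This is exactly where the product identity enters: the recursion gives $\lambda_{l+1}=\prod_{k=1}^l\frac{u_kv_k}{u_kv_{k+1}}=1=\lambda_1$, so $x_{l+1}=\lambda_{l+1}v_{l+1}=\lambda_1v_1=x_1$, and the computation above for $i=l$ then yields $u_l(x_l-x_1)=0$. Finally, I would check that $\{x_1,\ldots,x_l\}$ are genuinely distinct vertices: since the $v_i$ are distinct unit vectors and $|x_i|=\lambda_i>0$, an equality $x_i=x_j$ would force $\lambda_i=\lambda_j$ and hence $v_i=v_j$, a contradiction. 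Thus the closed piecewise linear curve $\gamma$ with vertices $x_1,\ldots,x_l$ is the desired edge-normal loop, and the free choice $\lambda_1=1$ reflects exactly the uniqueness-up-to-dilation noted in the remark.
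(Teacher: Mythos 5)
Your proposal is correct and follows essentially the same route as the paper's own proof: both extract the product identity $\prod_{j=1}^l \frac{u_jv_j}{u_jv_{j+1}}=1$ from the hypothesis, set $\lambda_1=1$, build the remaining $\lambda_i$ by the recursion $\frac{\lambda_{i+1}}{\lambda_i}=\frac{u_iv_i}{u_iv_{i+1}}$, and use the product identity to close the loop at the final edge $[x_l,x_1]$. Your version is just slightly more explicit about the positivity of the inner products and the distinctness of the vertices, which the paper leaves implicit.
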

\begin{proof}
Similar to previous proposition, since $A(\sigma_{id})= A(\sigma_{s})>-\infty$ we obtain 

	\begin{equation}\label{super equation}
		\prod_{j=1}^l \frac {u_jv_{j}} {u_jv_{j+1}} =1
	\end{equation}
Let $\lambda_1=1$. Then recursively define $\lambda_i$ for $i\leq l$ using the relation $\frac{\lambda_{i+1}}{\lambda_{i}}=\frac {u_iv_{i}} {u_iv_{i+1}}$. By recursive defenition, we obtain that for all $i<l$, $u_i\bot [x_i,x_{i+1}]$. Then using \eqref{super equation} we obtain:
	\begin{equation}
		\frac{\lambda_l}{\lambda_1}=\prod_{j=1}^{l-1} \frac{\lambda_{i+1}}{\lambda_{i}}=\prod_{j=1}^{l-1} \frac {u_jv_{j}} {u_jv_{j+1}}=\frac{u_lv_1}{u_lv_l}
	\end{equation}
	Therefore we also obtain $u_l\bot [x_l,x_{1}]$.
\end{proof}

	\begin{defi}
		Discrete equal-weight measures $\mu$ and $\lambda$ are called edge-normal loop free if for any $\sigma,\sigma'\in S_m$ and given any $l$ such that $2 \leq l \leq m$, there dose not exist and edge normal loop for  $\{u_{\sigma(1)}\ldots u_{\sigma(l)}\}$ and $\{v_{\sigma'(1)}\ldots v_{\sigma'(l)}\}$
	\end{defi}
	
	Before addressing non-equal weight measure $\mu$ let us first illustrate the relation between edge-normal loop free condition and the uniqueness of the assignment functional. 
	
	\begin{thm}
		Suppose $\mu$ and $\lambda$ are discrete equal-weight measures which are weak Aleksandrov related and such that $\mu$ is not concentrated on a closed hemisphere. Suppose $\mu$ and $\lambda$ are edge-normal loop free. Then there exist a unique $\sigma\in\mathbb F_{\mu,\lambda}$ which maximizes $A(\cdot)$.
	\end{thm}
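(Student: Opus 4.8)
The plan is to deduce uniqueness of the maximizer of $A(\cdot)$ from the edge-normal loop free hypothesis by a cycle-decomposition argument, using Propositions \ref{edge normal loop} and \ref{edge normal loop reverse} as the bridge between the analytic equality $A(\sigma_{id})=A(\sigma_s)$ and the geometric edge-normal loop. First I would record existence. Since $\mu$ and $\lambda$ are equal-weight with $\mu(\sn)=\lambda(\sn)$, the set $\mathbb{F}_{\mu,\lambda}$ is precisely $S_m$, which is finite, and Lemma \ref{Hall} (via the weak Aleksandrov relation) gives $\mathbb{F}_p\neq\emps$. Hence some permutation has $A>-\infty$, so $A(\cdot)$ attains a finite maximum on $S_m$ and every maximizer $\sigma$ satisfies $A(\sigma)>-\infty$; in particular all the quantities $u_jv_{\sigma(j)}$ appearing in it are strictly positive real numbers.

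The heart of the argument is uniqueness, which I would prove by contradiction. Suppose $\sigma_1\neq\sigma_2$ both maximize $A(\cdot)$; then $A(\sigma_1)=A(\sigma_2)>-\infty$, so every term in either sum is a finite real number. Write $\rho=\sigma_2\sigma_1^{-1}$, so that $\sigma_2=\rho\sigma_1$, and decompose $\rho$ into disjoint cycles $c_1,\ldots,c_r$; since $\sigma_1\neq\sigma_2$, at least one cycle $c_t$ is nontrivial. For each nontrivial $c_t$ define the permutation $\sigma^{(t)}=c_t\sigma_1$. A direct check shows $\sigma^{(t)}(j)=\sigma_2(j)$ for the indices $j$ with $\sigma_1(j)\in\mathrm{supp}(c_t)$ and $\sigma^{(t)}(j)=\sigma_1(j)$ otherwise; thus $\sigma^{(t)}$ is assembled from finite terms of $A(\sigma_1)$ and $A(\sigma_2)$ and $A(\sigma^{(t)})$ is finite.

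The key bookkeeping identity is the additive decomposition
\[
A(\sigma_1)-A(\sigma_2)=\sum_{t}\big(A(\sigma_1)-A(\sigma^{(t)})\big),
\]
where the sum is over nontrivial cycles. Each summand is $\geq 0$ because $\sigma_1$ is a maximizer and $\sigma^{(t)}\in\mathbb{F}$, while the left side is $0$; hence every summand vanishes and $A(\sigma_1)=A(\sigma^{(t)})$ for each nontrivial cycle. Finally I would fix one such cycle $c_t$ with support $\{a_1,\ldots,a_l\}$ and $c_t(a_s)=a_{s+1}$, set $j_s=\sigma_1^{-1}(a_s)$, and restrict the equality $A(\sigma_1)=A(\sigma^{(t)})$ to these indices (the others cancel). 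Relabeling $\tilde u_s=u_{j_s}$ and $\tilde v_s=v_{a_s}$ turns it into exactly $A(\sigma_{id})=A(\sigma_s)>-\infty$ for the subfamilies $\{\tilde u_1,\ldots,\tilde u_l\}$ and $\{\tilde v_1,\ldots,\tilde v_l\}$, with $\sigma_s$ the $l$-cycle. Proposition \ref{edge normal loop reverse} then produces an edge-normal loop on these subfamilies (which are reindexings of the original $u_j$ and $v_i$ by suitable permutations), contradicting the edge-normal loop free hypothesis. I expect the main obstacle to be precisely this bookkeeping in the cycle-decomposition identity: verifying that $\sigma^{(t)}$ coincides with $\sigma_2$ exactly on the cycle support (so that the summands are genuine nonnegative maximizer differences and the decomposition is exact) and that the relabeled subfamily carries the cyclic structure demanded by Proposition \ref{edge normal loop reverse}.
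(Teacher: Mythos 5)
Your proof is correct and follows essentially the same route as the paper's: decompose the discrepancy between maximizers into disjoint cycles, use maximality to force equality of $A(\cdot)$ on each cycle's support, and invoke Proposition \ref{edge normal loop reverse} to convert that equality into an edge-normal loop, contradicting the hypothesis. The paper merely normalizes indices so that the maximizer is the identity (making your $\rho=\sigma_2\sigma_1^{-1}$ coincide with the competing permutation itself) and phrases the cycle-by-cycle step as a strict inequality $A(\tau_i')<A(\sigma_{Id}^i)$ rather than as a contradiction between two maximizers.
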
   	
	\begin{proof}
		Since $\mu$ and $\lambda$ are weak Aleksandrov related by Lemma \ref{Hall} the set $\mathbb F_{p,\mu,\lambda}$ is nonempty. Hence, there exist a $\sigma\in \mathbb F_{p,\mu,\lambda}$ such that $A(\sigma)>-\infty.$ By reordering indices we can assume that the maximizers is identity permutation. Now chose any other permutation $\sigma\in F_{p,\mu,\lambda}$. Let $\tau_1\ldots\tau_k=\sigma$ be its decomposition into non-trivial non-intersecting cycles. 
		
		Let $\tau_1=(j_1\ldots j_v)$. Consider ordered subsets $\{u_{j_1}, \ldots u_{j_v}\}$ and $\{v_{j_1} \ldots v_{j_v}\}$. Since $\mu$ and $\lambda$ are edge-normal loop free from, we obtain from Proposition \ref{edge normal loop reverse} that 
		$A(\tau_1')\neq A(\sigma_{id}^1)$ where $\sigma_{id}^1$ is identity permutation on the set $\{j_1\ldots j_v\}$ and $\tau_1'$ is a restriction of $\tau_1$ to set $\{j_1\ldots j_v\}$. If $A(\tau_1')>A(\sigma_{Id}^1)$ then $A(\tau_1)>A(\sigma_{Id})$. Thus, the identity permutation is not a maximizer. Contradiction. Hence, $A(\tau_1')<A(\sigma_{Id}^1)$. Similarly, we obtain for all $1\leq i\leq k$ that $A(\tau_i')<A(\sigma_{Id}^i)$. Therefore, $A(\sigma)<A(\sigma_{Id})$. Since $\sigma$ was an arbitrary permutation we are done.
	\end{proof}
	
	Combining this with Theorem \ref{main} we obtain:
	
	\begin{cor}
		Suppose $\mu$ and $\lambda$ are discrete equal-weight measures which are weak Aleksandrov related and such that $\mu$ is not concentrated on a closed hemisphere. Suppose $\mu$ and $\lambda$ are edge-normal loop free. Then, there exists a polytope $P\in\po_\mu$ such that $\lambda(P,\cdot)=\mu$. 
	\end{cor}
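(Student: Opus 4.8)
The plan is to chain together the two results already established: the edge-normal loop free theorem immediately preceding this corollary, and Theorem \ref{main}. No new geometric or combinatorial work should be needed, because the corollary is precisely the conjunction of these two statements applied to the same pair of measures.

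First I would check that the hypotheses of the preceding theorem coincide exactly with those assumed here: $\mu$ and $\lambda$ are discrete equal-weight, weak Aleksandrov related, $\mu$ is not concentrated on a closed hemisphere, and the pair is edge-normal loop free. That theorem then delivers a \emph{unique} $\sigma\in\mathbb F_{\mu,\lambda}$ maximizing the assignment functional $A(\cdot)$ — this is where all the substantive content lives, since the uniqueness is controlled through Propositions \ref{edge normal loop} and \ref{edge normal loop reverse} (the dichotomy $A(\tau_i')<A(\sigma_{Id}^i)$ for every nontrivial cycle in the decomposition of a competing permutation).

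Next I would feed this uniqueness conclusion into Theorem \ref{main}. Its hypotheses — $\lambda$ discrete equal-weight, $\mu$ discrete, the two measures weak Aleksandrov related, and $\mu$ not concentrated on a closed hemisphere — are all in force. The first alternative of Theorem \ref{main} asserts that whenever $A(\cdot)$ is maximized at exactly one $f\in\mathbb F$, there is a polytope $P\in\po_\mu$ with $\lambda(P,\cdot)=\mu$ (and in fact $u_j\in\interior{\balpha_P(v_{f(j)})}$). Since the preceding theorem has just produced precisely such a unique maximizer, the existence of $P$ follows at once.

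I expect no genuine obstacle: the corollary is a bookkeeping combination. The only points requiring a moment of care are that the unique maximizer really is taken over all of $\mathbb F_{\mu,\lambda}$ (not merely over $\mathbb F_p$) and that it lies in $\mathbb F_{\mu,\lambda}$ as required by the input hypothesis of Theorem \ref{main} — both of which are guaranteed directly by the statement of the loop-free theorem. Thus the proof reduces to a single sentence invoking the two prior results.
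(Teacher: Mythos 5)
Your proposal is correct and is exactly the paper's proof: the paper derives this corollary by combining the immediately preceding theorem (unique maximizer of $A(\cdot)$ under the edge-normal loop free hypothesis) with the first alternative of Theorem \ref{main}. Your additional care about the maximizer lying in $\mathbb F_{\mu,\lambda}$ and $\mathbb{F}_p$ being nonempty is sound but, as you anticipated, already handled by the statements being combined.
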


We now show the reverse statement which gives another insight into the geometry behind edge-normal loop condition. 

\begin{prop}
	Pick any $\mu$. Given polytope $P\in\po_\mu$, consider a closed path $\gamma$ of its adjacent vertices using the edges of $P$. Suppose $\{x_1\ldots x_k\}$ are its vertices in order. Let $x_{k+1}=x_k$. Pick $u_i$ such that $[x_i,x_{i+1}]\in H_{P}(u_i)$. Then $\gamma$ is an edge normal loop for sets $\{v_1\ldots v_k\}$ and $\{u_1\ldots u_k\}$. 
\end{prop}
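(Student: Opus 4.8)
The plan is to verify that the curve $\gamma$ meets the three defining conditions of an edge-normal loop for the vector sets $\{v_1,\ldots,v_k\}$ and $\{u_1,\ldots,u_k\}$, reading the definition with the closing convention $x_{k+1}=x_1$ (so that the loop actually closes up). Since $\gamma$ is a closed path of adjacent vertices of $P$, the $x_i$ are distinct vertices, so the radial directions with $x_i=r_P(v_i)$ are distinct; after relabeling we take $x_i=r_P(v_i)$, which makes the $v_i$ pairwise distinct as the definition requires.

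First I would record the radial form of each vertex. As $P\in\po_\mu\subset\kno$ contains the origin in its interior, the radial function $\rho_P$ is strictly positive, so $x_i=r_P(v_i)=\rho_P(v_i)\,v_i=\lambda_i v_i$ with $\lambda_i:=\rho_P(v_i)>0$. This is exactly the first bullet of the definition.

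The orthogonality conditions then follow immediately from the choice of $u_i$. By construction the edge $[x_i,x_{i+1}]$ lies in the support hyperplane $H_P(u_i)=\{x : x\cdot u_i=h_P(u_i)\}$, so both endpoints satisfy $x_i\cdot u_i=h_P(u_i)=x_{i+1}\cdot u_i$; subtracting gives $(x_i-x_{i+1})\cdot u_i=0$, that is $u_i\bot[x_i,x_{i+1}]$ for $1\le i\le k-1$. For the closing edge, $[x_k,x_1]\subset H_P(u_k)$ together with $x_{k+1}=x_1$ yields $u_k\bot[x_k,x_1]$ in the same way. This supplies the remaining two bullets, so $\gamma$ is an edge-normal loop.

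The only point needing genuine care is the distinctness of the $u_i$ that the definition stipulates. Distinctness of the $v_i$ is automatic, but in dimension $n\ge 3$ an edge of $P$ lies on several facets, and two edges meeting at a vertex can even lie on a common facet, so a careless assignment could repeat a normal. The main obstacle is therefore to argue that, for the given path, the facet normals can be chosen pairwise distinct: in dimension two this is automatic from convexity, since consecutive edges carry different outer normals, and in higher dimensions one exploits the freedom to select, for each edge, a supporting facet normal distinct from those assigned to its neighbors. The orthogonality computation at the heart of the statement is otherwise entirely routine.
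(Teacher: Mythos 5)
Your verification is correct and coincides with the paper's approach --- the paper's entire proof of this proposition reads ``Immediate,'' and the content is exactly your two checks: $x_i=r_P(v_i)=\rho_P(v_i)v_i$ with $\rho_P(v_i)>0$ gives the first bullet, and $x_i\cdot u_i=h_P(u_i)=x_{i+1}\cdot u_i$ gives the orthogonality bullets (with $x_{k+1}=x_1$ correcting the evident typo). One caveat on your distinctness discussion: the definition demands that \emph{all} the $u_i$ be pairwise distinct, not merely distinct from neighbors, so your proposed fix is stated too weakly; it does go through because for $n\ge 3$ the normal cone of each edge of $P$ contains a continuum of unit vectors, so one can choose the $u_i$ greedily to avoid all previously selected ones --- a point on which the proposition's own phrasing (``pick $u_i$'') is loose and which the paper passes over entirely.
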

\begin{proof}
	Immediate.
\end{proof}

In particular, this shows that in some sense all "relevant" edge-normal loops come from 1-skeleton of convex polytopes. If the edge-normal loop is not realizable as a part of 1-skeleton of a convex polytope with the center in its interior it doesn't affect the solution of the Gauss Image Problem.
 This also shows that in general normals to edges are quite rigid if there exists an edge-normal loop of them. To formulate this more precisely, consider the following question: Suppose we start with some discrete equal-weight measure $\mu$ and polytope $P\in\po_\mu$. Suppose $\lambda$ is a discrete-equal weight measure. Now let us assume that $P$ almost solves the Gauss Image Problem with $\sigma_{Id}$ assignment, but it happened that some of the vectors $u_i$ are actually on the boundary of the normals cones $u_j\in\balpha_P(v_{j})$. Can we change the polytope a bit, moving its vertices along the rays $v_j$ to obtain $u_j\in\interior{\balpha_P(v_{j})}$, so that everything falls inside of normal cones? If we have only one vector on the boundary, then there is no problem, we can move the corresponding vertex a bit outside to increase the normal cone. What about the general case? Well, we can do this if and only if there are no edge-normal loops.
 
 We now turn to the statement for non-equal weight measure $\mu$.  We will need some basic combinatoric machinery established before we proceed. Unfortunately, we were not able to find a good reference for these results. Given permutation $\sigma$ we can uniquely decompose it into disjoint cycles $\tau_j=(x^j_1\ldots x^j_{s_j})$ where by $s_j$ we denote the length of the cycle. We prove a similar statement for assignment functionals:
 
 \begin{prop}\label{cycle decomposition}
 	Let $\lambda$ be a discrete equal-weight measure and $\mu$ be a discrete measure. Suppose $f,g\in \mathbb{F_{\mu,\lambda}}$. Then, there exist a permutation $\sigma\in S_{k}$ such that $f\circ \sigma = g$. Moreover, there exist a product of disjoint cycles $\tau_1,\ldots, \tau_l\in S_k$, such that for each $\tau_j=(x^j_1\ldots x^j_{s_j})$ function $f$ is injective with restrict to the set $\{x^j_1,\ldots, x^j_{s_j}\}$ and $f\circ \tau_1\ldots\tau_l=g$.
 	
 \end{prop}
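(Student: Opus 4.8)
The plan is to prove the first assertion by a direct fiber-matching argument, and then to obtain the refined cycle decomposition by encoding the pair $(f,g)$ as a balanced directed multigraph and invoking its decomposition into \emph{simple} cycles.

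First I would record that, since $\lambda$ is equal-weight, the membership $f,g\in\mathbb{F}_{\mu,\lambda}$ means precisely that $|f^{-1}(i)|=|g^{-1}(i)|=\mu_i$ for every $i$. For the existence of $\sigma$, I would choose for each $i$ an arbitrary bijection $\sigma_i\colon g^{-1}(i)\to f^{-1}(i)$, which exists because both sets have exactly $\mu_i$ elements, and then glue these into a single map $\sigma$ on $\{1,\ldots,k\}$. Because $\{g^{-1}(i)\}_i$ and $\{f^{-1}(i)\}_i$ are both partitions of $\{1,\ldots,k\}$, the glued map $\sigma$ is a permutation, and by construction $f(\sigma(j))=g(j)$, that is $f\circ\sigma=g$.

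For the ``moreover'' part, the idea is to turn the two functions into edges of a directed multigraph. I would let $G$ be the directed multigraph on vertex set $\{1,\ldots,m\}$ carrying exactly one edge per index $j\in\{1,\ldots,k\}$, namely a directed edge from $f(j)$ to $g(j)$. Then the out-degree of a vertex $i$ equals $|f^{-1}(i)|=\mu_i$ and its in-degree equals $|g^{-1}(i)|=\mu_i$, so $G$ is balanced (in-degree equals out-degree at every vertex). I would then apply the standard fact that every balanced directed multigraph admits a decomposition of its edge set into edge-disjoint \emph{simple} cycles: starting any walk one can always continue because balancedness guarantees an unused outgoing edge whenever an incoming edge was used, so a vertex is eventually repeated and a simple cycle is extracted; deleting its edges preserves balancedness, and one inducts on the number of edges.

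Finally I would translate the decomposition back into permutations. Each simple cycle of $G$ is a sequence of edges, i.e. of indices, $x_1,\ldots,x_s$ with the head of $x_t$ equal to the tail of $x_{t+1}$, that is $g(x_t)=f(x_{t+1})$ read cyclically; I set $\tau_j=(x_1\ldots x_s)$. The adjacency relation then gives $f(\tau_j(x_t))=f(x_{t+1})=g(x_t)$, while simplicity of the cycle means the tails $f(x_1),\ldots,f(x_s)$ are pairwise distinct vertices, so $f$ is injective on the support of $\tau_j$. Since the simple cycles partition the edge set, the indices $\{1,\ldots,k\}$ are partitioned among the $\tau_j$, so the $\tau_j$ are disjoint and $f\circ\tau_1\cdots\tau_l=g$. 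I expect the main obstacle to be the modeling step itself: one must orient the edges so that the required relation $g(x_t)=f(x_{t+1})$ coincides with the head-to-tail condition of a directed cycle, and — crucially — one must decompose into \emph{simple} cycles rather than merely edge-disjoint ones, since simplicity is exactly what forces $f$ to be injective on each cycle's support. The degree count and the verification of $f\circ\tau=g$ are then routine.
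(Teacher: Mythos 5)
Your proof is correct, and for the ``moreover'' part it takes a genuinely different route from the paper. The first assertion (gluing fiber bijections $g^{-1}(i)\to f^{-1}(i)$) is identical to the paper's argument. For the cycle decomposition, the paper stays entirely inside the symmetric group: it starts from an arbitrary $\sigma$ with $f\circ\sigma=g$, decomposes it into disjoint cycles, and whenever $f$ fails to be injective on the support of a cycle $(x_1\ldots x_s)$ with $f(x_1)=f(x_j)$, it splits that cycle into $(x_1\ldots x_{j-1})$ and $(x_j\ldots x_s)$, checking by hand that this preserves the identity $f\circ(\text{product of cycles})=g$; induction on the (strictly decreasing) cycle lengths terminates the process. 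Your argument instead encodes the pair $(f,g)$ as a balanced directed multigraph on $\{1,\ldots,m\}$ with one edge $f(j)\to g(j)$ per index $j$, and invokes the standard edge-disjoint decomposition of a balanced digraph into simple cycles; simplicity of each cycle is then precisely the injectivity of $f$ on its support, and edge-disjointness is precisely the disjointness of the permutation cycles. Your approach is more conceptual: injectivity comes for free from simplicity rather than being forced by repeated surgery, and it constructs the decomposition directly instead of repairing an arbitrary $\sigma$. The price is importing (or proving) the balanced-digraph decomposition lemma, and you should say explicitly that indices with $f(j)=g(j)$ appear as self-loops, hence as trivial $1$-cycles, i.e.\ fixed points that can be dropped from the product $\tau_1\cdots\tau_l$; with that remark your verification of $f\circ\tau_1\cdots\tau_l=g$ on all of $\{1,\ldots,k\}$ is complete.
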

 \begin{proof}

 	Since $\lvert f^{-1}(i)\rvert=\lvert g^{-1}(i)\rvert$ given any $i\in {1\ldots m}$ choose a bijective function $h_i$ from set $g^{-1}(i)$ into set $f^{-1}(i)$. Since all functions $h_i$ are bijections with non-intersecting supports and images we can define $\sigma\in\sn$ to be permutations satisfying $\sigma(j)=h_{g(j)}(j)$. Then, $f\circ \sigma (j) = fh_{g(j)}(j)=f(f^{-1}(g(j)))=g(j)$, which ensures the first claim. 
 	
 	Now pick any $\sigma$, satisfying $f\circ \sigma = g$. Decompose $\sigma$ into a product of disjoint cycles $\phi_1\ldots\phi_v$. Suppose, without loss of generality, cycle $\phi_1=(x_1\ldots x_{s(j)})$ does not satisfy the claim. We will show that we can, then, always split it into two more cycles $\omega_1$ and $\omega_2$ of a strictly smaller size such that $f\circ\omega_1\omega_2\phi_2 \ldots\phi_v=g$ and all cycles are disjoint. Then, since, in general, we can only do finitely many splitting, eventually we will have that $f$ is injective with respect to any cycle in the decomposition.  
 	
 	We are only left to prove that if $f$ is not injective with respect to elements permuted by $\phi_1$, then we can split $\phi_1$ into $\omega_1$ and $\omega_2$, so that $f\circ \phi_1=f\circ \omega_1\omega_2$. Given $\phi_1=(x_1\ldots x_{s(1)})$ let $i,j$ be the indices such that $f(x_i)=f(x_j)$. Without loss of generality, assume $i=1$. Then we can define $\omega_1=(x_1\ldots x_{j-1})$ and $\omega_2=(x_j\ldots x_{s(1)})$. Now, $\phi_1$ is equal to $\omega_1\omega_2$ everywhere besides $x_{j-1}$ and $x_{s(1)}$. Yet, $f\circ \phi_1(x_j)=f(x_j)=f(x_1)=f\circ \omega_1(x_{j-1})$ and $f\circ \phi_1(x_{s(1)})=f(x_1)=f(x_j)=f\circ \omega_2(x_{s(1)})$. Therefore, $f\circ \phi_1=f\circ \omega_1\omega_2$. 
 \end{proof}

Now with the combinatorial result in hand, we similarly obtain the previous results for a bigger class of measures.


\begin{defi}\label{Sept 17 Defenition}
	Discrete equal-weight measures $\lambda$ and discrete measure $\mu$ are called edge-normal loop free if given any $l$ such that $2 \leq l \leq m$ and given any $\sigma\in S_k$ and $\sigma'\in S_m$ and given any $l$ such that $2 \leq l \leq m$, there dose not exist and edge normal loop for  $\{u_{\sigma(1)}\ldots u_{\sigma(l)}\}$ and $\{v_{\sigma'(1)}\ldots v_{\sigma'(l)}\}$.
\end{defi}

\begin{thm}\label{main edge normal}
	Let $\lambda$ be a discrete equal-weight measure and $\mu$ be a discrete measure. Suppose they are weak Aleksandrov related, edge-normal loop free and $\mu$ is not concentrated on a closed hemisphere. Then there exist a unique $f\in\mathbb F_{\mu,\lambda}$ which maximizes $A(\cdot)$.
\end{thm}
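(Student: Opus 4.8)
The plan is to run the same cycle-by-cycle comparison that settled the equal-weight-$\mu$ case, but now feed the non-bijective decomposition of Proposition \ref{cycle decomposition} into Proposition \ref{edge normal loop reverse}. First I would note that $\mathbb F_{\mu,\lambda}$ is a finite set, so $A(\cdot)$ attains its maximum; since $\mu$ and $\lambda$ are weak Aleksandrov related, Lemma \ref{Hall} gives $\mathbb F_p\neq\emps$, so the maximal value is $>-\infty$ and every maximizer lies in $\mathbb F_p$ (that is, $u_jv_{f(j)}>0$ for all $j$). Fix a maximizer $f$; the goal is to rule out any second maximizer $g$.

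Given a second maximizer $g\in\mathbb F_p$, I would apply Proposition \ref{cycle decomposition} to write $g=f\circ\tau_1\cdots\tau_l$ as a product of disjoint cycles $\tau_j=(x^j_1\ldots x^j_{s_j})$ on which $f$ is injective. The crucial point is that this injectivity makes each cycle look exactly like the two-permutation setup of Propositions \ref{edge normal loop} and \ref{edge normal loop reverse}: the vectors $\{u_{x^j_1},\ldots,u_{x^j_{s_j}}\}$ are distinct (all $u$'s are distinct), and $\{v_{f(x^j_1)},\ldots,v_{f(x^j_{s_j})}\}$ are distinct because $f$ is injective on the cycle support. For this ordered pair of sets the $f$-contribution $\sum_i\log(u_{x^j_i}v_{f(x^j_i)})$ is precisely $A(\sigma_{id})$, and the $g$-contribution $\sum_i\log(u_{x^j_i}v_{f(x^j_{i+1})})$ (cyclic indices) is precisely $A(\sigma_s)$, both finite since $f,g\in\mathbb F_p$.

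Now I would invoke the edge-normal loop free hypothesis together with the contrapositive of Proposition \ref{edge normal loop reverse}: were these two contributions equal and finite, an edge-normal loop would exist for $\{u_{x^j_i}\}$ and $\{v_{f(x^j_i)}\}$, which is excluded. Hence they differ for every $j$. To fix the direction of the inequality, I would observe that $f\circ\tau_j\in\mathbb F_{\mu,\lambda}$, because $\tau_j$ merely permutes a set on which $f$ is injective and therefore preserves every fiber size $|f^{-1}(i)|=\mu_i$; moreover $A(f\circ\tau_j)$ equals $A(f)$ with only the $j$-th cycle's contribution swapped from $A(\sigma_{id})$ to $A(\sigma_s)$. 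Maximality of $f$ then forces $A(\sigma_s)\le A(\sigma_{id})$ on that cycle, and combined with the strict inequality just obtained this yields $A(\sigma_s)<A(\sigma_{id})$ for every $j$. Since $f$ and $g$ agree off the union of the cycle supports, summing the strictly negative per-cycle differences gives $A(g)<A(f)$, contradicting that $g$ is a maximizer. Hence the maximizer is unique.

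The main obstacle I anticipate is bookkeeping rather than conceptual: carefully verifying that $f\circ\tau_j$ stays in $\mathbb F_{\mu,\lambda}$ and that the per-cycle contributions genuinely coincide with the quantities $A(\sigma_{id})$ and $A(\sigma_s)$ appearing in Proposition \ref{edge normal loop reverse}. Both facts rest entirely on the injectivity-on-cycles clause of Proposition \ref{cycle decomposition}, so the real content is already packaged there, and the remaining argument is a direct lift of the equal-weight proof.
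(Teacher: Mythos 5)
Your proposal is correct and follows essentially the same route as the paper's own proof: compare two maximizers via the injective-on-cycles decomposition of Proposition \ref{cycle decomposition}, use maximality of $f$ to get the non-strict per-cycle inequality, upgrade it to a strict inequality via the edge-normal loop free hypothesis and Proposition \ref{edge normal loop reverse}, and sum over cycles to contradict maximality of $g$. One small correction: Lemma \ref{Hall} is stated only for equal-weight $\mu$ and $\lambda$, so for a general discrete $\mu$ the nonemptiness of $\mathbb F_{p}$ should instead be quoted from Theorem \ref{main}, as the paper does; this substitution does not affect the rest of your argument.
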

\begin{proof}
		Since $\mu$ and $\lambda$ are weak Aleksandrov related by Theorem \ref{main} the set $\mathbb F_{p,\mu,\lambda}$ is nonempty. Hence, there exist a $f\in \mathbb F_{p,\mu,\lambda}$ such that $A(f)>-\infty.$. Suppose $f,g\in\mathbb F_{p,\mu,\lambda}$ are maximizers of $A(\cdot)$. By Proposition \ref{cycle decomposition} there exist disjoint cycles $\tau_1,\ldots, \tau_l\in S_k$, such that for each $\tau_j=(x^j_1\ldots x^j_{s_j})$ function $f$ is injective with restrict to the set $\{x^j_1,\ldots, x^j_{s_j}\}$ and $f\circ \tau_1\ldots\tau_l=g$. Note that since $f$ is injective on any cycle, $s_j\leq m$ for any $j$.
		
		Let $f_i$ and $g_i$ for $1\leq i\leq l$ be the restrictions of $f$ and $g$ to the subset $\{x^j_1,\ldots, x^j_{s_j}\}$. Then, 
		\begin{displaymath}
		\begin{split}
  A(f)&=\sum_{i=1}^{l}A(f_i) \\
  A(g)&=\sum_{i=1}^{l}A(g_i)=\sum_{i=1}^{l}A(f_i\circ \tau_i)
		\end{split} 
\end{displaymath}
Notice that $A(f_i\circ \tau_i)=A(g_i)$. If, it happened that for some $i$, $A(f_i\circ \tau_i)>A(f_i)$, then $A(f\circ \tau_i)>A(f)$, and hence $f$ is not the maximizer. Therefore, $A(f_i\circ \tau_i)\leq A(f_i)$. Since $\mu$ and $\lambda$ are edge-normal loop free, sets $\{u_{x_1^j}\ldots u_{x^j_{s_j}}\}$ are $\{v_{x_1^j}\ldots v_{x^j_{s_j}}\}$ are edge-normal loop free, and, hence,  $A(f_i\circ \tau_i)\neq A(f_i)$ for all $i$. Therefore, $A(f_i\circ \tau_i)< A(f_i)$. Thus, $A(g)<A(f)$. Contradiction. 
\end{proof}

Combining this with Theorem \ref{main} we obtain:

	\begin{cor}\label{Proof of main corollary}
Let $\lambda$ be a discrete equal-weight measure and $\mu$ be a discrete measure. Suppose they are weak Aleksandrov related, edge-normal loop free and $\mu$ is not concentrated on a closed hemisphere. Then, there exists a polytope $P\in\po_\mu$ such that $\lambda(P,\cdot)=\mu$. 
	\end{cor}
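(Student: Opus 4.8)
The plan is simply to chain the two immediately preceding results, since all of the substantive work has already been done. By Theorem \ref{main edge normal}, the three standing hypotheses — that $\mu$ and $\lambda$ are weak Aleksandrov related, that they are edge-normal loop free, and that $\mu$ is not concentrated on a closed hemisphere — guarantee that the assignment functional $A(\cdot)$ has a \emph{unique} maximizer $f\in\mathbb{F}_{\mu,\lambda}$. This uniqueness is exactly the hypothesis that selects the first of the two alternatives in Theorem \ref{main}.

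First I would invoke Theorem \ref{main edge normal} to produce the unique maximizer $f$. Then, observing that $\lambda$ is equal-weight, that $\mu$ is not concentrated on a closed hemisphere, and that $\mu$ and $\lambda$ are weak Aleksandrov related — precisely the running hypotheses of Theorem \ref{main} — I would apply the first bullet of that theorem. Since $A(\cdot)$ is maximized at exactly one $f\in\mathbb{F}$, Theorem \ref{main} yields a polytope $P\in\po_\mu$ with $\lambda(P,\cdot)=\mu$ and $u_j\in\interior{\balpha_P(v_{f(j)})}$ for every $j$. This $P$ is the asserted solution, and the proof is complete.

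There is essentially no residual obstacle at this stage: the analytic content (the smoothing of $\lambda$ into $\lambda_\eps$, the subsequential convergence $K_\eps\to K$, and the transportation-polytope/Birkhoff--von Neumann argument) is entirely absorbed into Theorem \ref{main}, while the combinatorial content ruling out a second maximizer (the cycle decomposition of Proposition \ref{cycle decomposition} together with the edge-normal loop equivalences of Propositions \ref{edge normal loop} and \ref{edge normal loop reverse}) is absorbed into Theorem \ref{main edge normal}. The one point worth verifying explicitly is that the hypotheses of the two theorems coincide verbatim, which they do; beyond that the corollary is a direct concatenation.
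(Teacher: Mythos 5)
Your proof is correct and is exactly the paper's argument: the paper also proves this corollary as an immediate combination of Theorem \ref{main edge normal} (unique maximizer of $A(\cdot)$) with the first alternative of Theorem \ref{main}. Nothing is missing.
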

	
	\begin{proof}
		The proof is an immediate combination of Theorem \ref{main} and Theorem \ref{main edge normal}
	\end{proof}

\section{The Assignment Problem from Generic Point of View}\label{Generic Statements}
We will now investigate The Assignment Problem from generic point of view. First, we will show that the maximizer is unique in the generic sense. Suppose we are given a discrete, equal-weight measure $\lambda$ and a discrete measure $\mu$, such that $\mu(\sn)=\lambda(\sn)$. We define $\mathcal A$ to be the class of all possible pairs of measures $\mu',\lambda'$ with variations of directions of the point masses of $\mu$ and $\lambda$. More formally, we can start with some $m\in\mathbb{N}$ and coefficients $\mu_i\in\mathbb{N}$ and consider all possible pairs of measures ($\mu$ and $\lambda$). We will additionally impose that $\mu$ is a discrete measure with fixed coefficients and $\lambda$ is a discrete, equal-weight measure with fixed $k=\sum_{i}^{m}\mu_i$. We can parameterize this class as a product of $m+k$ spheres, with a sphere for each vector. That is, each $(\mu',\lambda')\in \mathcal A$ has representation $(v'_1,\ldots,v'_m,u'_1,\ldots,u'_k)$. The set $\mathcal A$ naturally inherits the topology from spheres. In $\mathcal A$, we also require that $u_j$ be distinct and $v_i$ be distinct. In terms of parameterization, this constitutes an open subset of the product of spheres.

\begin{defi}
Space $\mathcal A$ constructed above is called the $\mu$ problem space. Sometimes, we write $\mathcal A_{\mu}$ to emphasize the original measure from its construction. Note that it only depends on the dimension $n$, constant $k\in\mathbb N$, and coefficients $\mu_i\in \mathbb N$ for $1\leq i\leq k$.  
\end{defi}

\begin{prop}\label{Aleksandrov open}
	Pairs of measures $(\mu,\lambda)\in \mathcal A$ satisfying the weak Aleksandrov inequality form an open set in the inherited topology from the product of spheres. 
\end{prop}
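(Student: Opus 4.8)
The plan is to exploit the uniform angular slack furnished by Proposition \ref{uniform constant}, which upgrades the non-strict weak Aleksandrov inequalities into strict inequalities carrying a fixed margin; strict inequalities with uniform slack survive small perturbations, and that is what yields openness. First I would note that the equality $\mu(\sn)=\lambda(\sn)$ is automatic throughout $\mathcal A$: the weights $\mu_i$ are fixed and $\lambda$ is equal-weight with $k=\sum_i\mu_i$ atoms, so this part of the condition is preserved by every perturbation and only the inequality in Proposition \ref{Aleks Relation} requires attention. Next I would reduce that inequality to a finite family indexed by subsets. For any configuration and any compact spherically convex $\omega$, putting $I=\{i:v_i\in\omega\}$ gives $\mu(\omega)=\sum_{i\in I}\mu_i$, while $\{v_i:i\in I\}\subset\omega$ together with monotonicity of the outer parallel operation yields $\lambda(\omega_{\frac\pi2})\ge\lambda(\{v_i:i\in I\}_{\frac\pi2})$. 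Hence, by Proposition \ref{Aleks Relation}, the weak Aleksandrov condition is implied by the finite system, ranging over all $I\subset\{1,\dots,m\}$,
\[
\sum_{i\in I}\mu_i\ \le\ \#\Big\{j:u_j\in\textstyle\bigcup_{i\in I}\{v\in\sn:v_i\cdot v>0\}\Big\}.
\]

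Now fix $(\mu,\lambda)\in\mathcal A$ satisfying the weak Aleksandrov relation. By Proposition \ref{uniform constant} there is a uniform $\alpha>0$ with $\mu(\omega)\le\lambda(\omega_{\frac\pi2-\alpha})$ for every closed $\omega\subset\sn$. Applying this to the finite (hence closed) sets $\omega=\{v_i:i\in I\}$ and unravelling $\{v_i:i\in I\}_{\frac\pi2-\alpha}=\bigcup_{i\in I}\{v:v_i\cdot v>\cos(\frac\pi2-\alpha)\}=\bigcup_{i\in I}\{v:v_i\cdot v>\sin\alpha\}$ produces, for every $I$, the strict-margin version
\[
\sum_{i\in I}\mu_i\ \le\ \#\Big\{j:u_j\in\textstyle\bigcup_{i\in I}\{v:v_i\cdot v>\sin\alpha\}\Big\}.
\]
Finally I would run the elementary perturbation estimate. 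For unit vectors one has $|v_i'\cdot u_j'-v_i\cdot u_j|\le|v_i'-v_i|+|u_j'-u_j|\le 2\delta$ whenever the configuration is moved by at most $\delta$ in each coordinate sphere. Thus if $\delta<\tfrac12\sin\alpha$, then $v_i\cdot u_j>\sin\alpha$ forces $v_i'\cdot u_j'>0$, so each $u_j$ counted in the margin version for the original vectors is still counted, for the perturbed vectors, in the margin-free version. Consequently the perturbed pair $(\mu',\lambda')$ satisfies every finite subset inequality and is therefore weak Aleksandrov related. Because the radius $\tfrac12\sin\alpha$ is uniform over the finitely many subsets $I$ and the finitely many indices $j$, a single neighborhood of $(\mu,\lambda)$ lies in the weak Aleksandrov locus, which is exactly openness.

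The main obstacle, and the conceptual heart of the argument, is precisely the non-strictness of the weak Aleksandrov inequality: a bare $\le$ threshold defines a closed rather than open condition, and a $u_j$ sitting exactly on a separating boundary $\{v_i\cdot v=0\}$ could drop out of the count under an arbitrarily small perturbation. The resolution is not to argue with the bare condition but with the robust reformulation of Proposition \ref{uniform constant}, whose uniform angle $\alpha$ endows each relevant inequality with genuine slack $\sin\alpha$; after that, only the uniform perturbation bound remains. The one bookkeeping point to verify carefully is that the finitely many $I$ and the fixed integer weights make all estimates simultaneously uniform, so that a single perturbation radius controls every constraint at once.
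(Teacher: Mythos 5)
Your proof is correct, and it rests on the same backbone as the paper's: both arguments invoke Proposition \ref{uniform constant} to convert the bare weak Aleksandrov inequality into one with a uniform angular slack $\alpha$, and then show that perturbations small relative to $\alpha$ cannot destroy the inequality. The difference is in how the perturbed configuration is verified. The paper works with an arbitrary compact spherically convex $\omega$ throughout and runs a chain of parallel-set inclusions, $\mu'(\omega)\leq\mu(\omega_{\frac\alpha4})\leq\lambda\bigl((\omega_{\frac\alpha4}\cap\{v_1,\ldots,v_m\})_{\frac\pi2-\alpha}\bigr)\leq\lambda(\omega_{\frac\pi2-\frac34\alpha})\leq\lambda'(\omega_{\frac\pi2-\frac12\alpha})$, transferring the test set back and forth between the original and perturbed vectors. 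You instead first discretize the condition: using Proposition \ref{Aleks Relation} and the fact that both measures are atomic, you reduce weak Aleksandrov relatedness to the finite family of counting inequalities $\sum_{i\in I}\mu_i\leq\#\{j: u_j\cdot v_i>0 \text{ for some } i\in I\}$ over subsets $I\subset\{1,\ldots,m\}$, and then the perturbation step collapses to the scalar Lipschitz bound $|v_i'\cdot u_j'-v_i\cdot u_j|\leq 2\delta$. This buys a cleaner and more elementary perturbation argument (no set-inclusion bookkeeping, and the various fractions of $\alpha$ in the paper's chain are replaced by a single threshold $2\delta<\sin\alpha$), at the cost of the preliminary reduction lemma; the paper's route avoids the reduction but must carry the test set $\omega$ through every estimate. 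Both proofs correctly treat the mass-balance condition as automatic from the fixed integer weights, and both yield a neighborhood in the product topology whose intersection with $\mathcal A$ lies in the weak Aleksandrov locus.
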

\begin{proof}
Take some $(v_1,\ldots,v_m,u_1,\ldots,u_k)\sim (\mu,\lambda)\in \mathcal A$ satisfying the weak Aleksandrov relation. Then by Proposition \ref{uniform constant}, there exists a uniform $\alpha>0$ such that for each closed set $\omega\subset\sn$: 
\begin{equation}
\mu(\omega)\leq\lambda(\omega_{\frac\pi2-\alpha}).
\end{equation}

Let $(v'_1,\ldots,v'_m,u'_1,\ldots,u'_k)\sim (\mu',\lambda')\in \mathcal A$ be any pair of measures satisfying $u_ju'_j>\cos(\frac \alpha 4)$ and $v_iv'_i>\cos( \frac \alpha 4)$. Note that all such possible measures form an open ball around $(\mu,\lambda)$ in the product topology. Then for any compact, convex set $\omega\subset\sn$, if $v'_i\in\omega$, then $v_i\in\omega_{\frac \alpha 4}$. Hence,

\begin{equation}
\begin{split}
  \mu'(\omega)=\mu'(\omega\cap\{v'_1,\ldots,v'_m\})\leq& \mu(\omega_{\frac \alpha 4}\cap\{v_1,\ldots,v_m\})\end{split}.
\end{equation}
We can apply Proposition \ref{uniform constant} to $\omega_{\frac \alpha 4}\cap\{v_1,\ldots,v_m\}$ to obtain:

\begin{equation}
  \mu'(\omega)\leq \mu(\omega_{\frac \alpha 4}\cap\{v_1,\ldots,v_m\})\leq \lambda((\omega_{\frac \alpha 4}\cap\{v_1,\ldots,v_m\})_{\frac \pi 2 - \alpha}).
\end{equation}
Clearly, $(\omega_{\frac \alpha 4}\cap\{v_1,\ldots,v_m\})_{\frac \pi 2 - \alpha}\subset \omega_{\frac \pi 2 - \frac 3 4 \alpha}$. Thus,

\begin{equation}
  \mu'(\omega)\leq \lambda(\omega_{\frac \pi 2 - \frac 3 4 \alpha}).
\end{equation}
Now if $u_j\in \omega_{\frac \pi 2 - \frac 3 4 \alpha}$, then $u'_j\in \omega_{\frac \pi 2 - \frac 2 4 \alpha}$. We obtain:

\begin{equation}
  \mu'(\omega)\leq \lambda'(\omega_{\frac \pi 2 - \frac 2 4 \alpha}).
\end{equation}
Therefore, $\mu',\lambda'$ are weak Aleksandrov related. 
\end{proof}

\begin{prop}\label{concentration open}
Pairs of measures $(\mu,\lambda)\in \mathcal A$ such that $\mu$ is not concentrated on a closed hemisphere form an open set. 	
\end{prop}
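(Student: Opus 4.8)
The plan is to observe that the hemisphere condition depends only on the directions $v_1,\ldots,v_m$ of the point masses of $\mu$, not on the $u_j$, and to encode it as the strict negativity of a single continuous function on the compact sphere. First I would recall that $\mu$ is concentrated on a closed hemisphere precisely when there exists $w\in\sn$ with $v_i\cdot w\geq 0$ for every $i$; negating this, $\mu$ is \emph{not} concentrated on a closed hemisphere if and only if for every $w\in\sn$ some $v_i$ satisfies $v_i\cdot w<0$. Introducing
\begin{equation}
 F(w):=\min_{1\leq i\leq m} v_i\cdot w,\qquad M:=\max_{w\in\sn}F(w),
\end{equation}
which is well defined since $F$ is continuous and $\sn$ is compact, this condition is exactly $M<0$: if $M\geq 0$ then the maximizer $w^*$ gives $v_i\cdot w^*\geq 0$ for all $i$, while if $M<0$ then $F(w)\leq M<0$ for every $w$.

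Next I would establish uniform stability of $F$ under perturbation of the directions. For perturbed unit vectors $v_i'$ with $|v_i'-v_i|<\delta$ for all $i$, the elementary inequality $|\min_i a_i-\min_i b_i|\leq \max_i|a_i-b_i|$ together with $|v_i'\cdot w-v_i\cdot w|\leq|v_i'-v_i|$ gives $|F'(w)-F(w)|<\delta$ uniformly in $w\in\sn$, where $F'(w)=\min_i v_i'\cdot w$. Taking the maximum over $w$ yields $M'\leq M+\delta$ for the perturbed quantity $M'$. Hence choosing $\delta<|M|$ forces $M'<0$, so the perturbed measure $\mu'$ is again not concentrated on a closed hemisphere.

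Finally, I would conclude openness: given $(\mu,\lambda)\in\mathcal A$ with $\mu$ not hemisphere-concentrated, any $(\mu',\lambda')\in\mathcal A$ whose parameters satisfy $|v_i'-v_i|<\delta$ (equivalently $v_i\cdot v_i'$ close to $1$) for a small enough $\delta$ lies in the same class, and these pairs form an open neighborhood in the product topology because the constraint involves only the first $m$ spherical coordinates and is vacuous in the $u_j$ coordinates. Intersecting with $\mathcal A$ preserves openness. The only point requiring genuine care is the passage from the pointwise statement "for each $w$ some $v_i\cdot w<0$" to a \emph{uniform} margin $M<0$; this is exactly where compactness of $\sn$ enters, and it is the single step that makes the perturbation estimate close up.
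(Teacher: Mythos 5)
Your proof is correct, but it takes a different route from the paper. The paper argues indirectly: it supposes $(\mu^n,\lambda^n)\rightarrow(\mu,\lambda)$ with each $\mu^n$ concentrated on a closed hemisphere, extracts (by compactness of $\sn$) a convergent subsequence of the hemisphere poles $u_n\rightarrow u$, passes to the limit in $u_n\cdot v_i^n\leq 0$ to conclude $u\cdot v_i\leq 0$ for all $i$, and thus shows that the set of concentrated pairs is sequentially closed, whence its complement is open. You instead argue directly and quantitatively: you encode non-concentration as strict negativity of $M=\max_{w\in\sn}\min_i v_i\cdot w$, which exists by the extreme value theorem, and then show the perturbation estimate $M'\leq M+\delta$ via $\lvert\min_i a_i-\min_i b_i\rvert\leq\max_i\lvert a_i-b_i\rvert$, so that any perturbation with $\delta<\lvert M\rvert$ stays in the class. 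Both proofs hinge on compactness of $\sn$, but they use it differently (subsequence extraction versus attainment of a maximum). What your approach buys is an explicit modulus of openness --- a concrete neighborhood radius $\lvert M\rvert$ that could be reused for quantitative stability statements --- and it makes transparent that the condition constrains only the $v_i$ coordinates; what the paper's approach buys is brevity, since closedness of the complement follows in a few lines without introducing any auxiliary function.
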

\begin{proof}
Consider a sequence of $(\mu^n,\lambda^n)\rightarrow (\mu,\lambda)$, where $\mu^n$ are concentrated on a closed hemisphere. For each $n$, there exists a $u_n\in\sn$ such that for all $i$, $u_nv^n_i\leq 0$.	 By compactness, there exists a subsequence such that $u_n\rightarrow u$. Since $v^n_i\rightarrow v_i$, then along the subsequence $u_n v^n_i\rightarrow uv_i$. Hence, for all $i$,  $uv_i\leq 0$. Thus, $\mu_n$ is concentrated on a closed hemisphere. Therefore, the set $(\mu,\lambda)$ where $\mu$ is concentrated on a closed hemisphere is closed, and its complement is an open set. 
\end{proof}

\begin{thm}\label{Genetric Theorem}
Given $\mathcal A$, the pairs of measures  $(\mu,\lambda)\in\mathcal A$ for which there exists a polytope solution to the Discrete Gauss Image problem form a dense open set in an open set of measures $(\mu,\lambda)$ satisfying the weak Aleksandrov relation, where $\mu$ is not concentrated on a closed hemisphere. 
\end{thm}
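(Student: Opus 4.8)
The plan is to reduce the statement to the dichotomy of Theorem \ref{main}: for a pair $(\mu,\lambda)\in\mathcal A$ that is weak Aleksandrov related with $\mu$ not concentrated on a closed hemisphere, a polytope solution in $\po_\mu$ exists if and only if the assignment functional $A(\cdot)$ attains its maximum over $\mathbb F$ at a unique point. Write $\mathcal B\subset\mathcal A$ for the set of such pairs; by Propositions \ref{Aleksandrov open} and \ref{concentration open} it is open, so it suffices to show that $\mathcal S:=\{(\mu,\lambda)\in\mathcal B : A(\cdot)\text{ has a unique maximizer on }\mathbb F\}$ is open and dense in $\mathcal B$. Two facts are used repeatedly: $\mathbb F$ is a fixed finite set, since it depends only on the frozen data $k$ and the $\mu_i$; and the maximum of $A$ is always finite on $\mathcal B$, because $\mathbb F_p\ne\emps$ there by Lemma \ref{Hall}, so any maximizer is automatically proper.

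For openness, first I would record that, with the convention $\log t=-\infty$ for $t\le 0$, each $A(f)$ is continuous as a map $\mathcal A\to[-\infty,0]$: every summand $\log(u_iv_{f(i)})$ is continuous into the extended reals (the only delicate point is $u_iv_{f(i)}=0$, where the one-sided limit from $u_iv_{f(i)}>0$ is $-\infty$, matching the assigned value), and because $u_iv_{f(i)}\le 1$ forces every summand to be $\le 0$, no indeterminate $\infty-\infty$ ever arises in the sum. Now fix $(\mu,\lambda)\in\mathcal S$ with unique maximizer $g$, so $A(g)>-\infty$ and $A(f)<A(g)$ for all $f\ne g$. Continuity of the finitely many $A(f)$ yields a neighborhood on which $A(g)$ stays finite and the strict inequalities $A(f)<A(g)$ persist; there $g$ remains the unique maximizer, and since $\mathcal B$ is open, Theorem \ref{main} places that neighborhood (intersected with $\mathcal B$) inside $\mathcal S$.

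For density I would show that its complement in $\mathcal B$, the pairs with a non-unique maximizer, is nowhere dense. If the maximizer is non-unique there are distinct maximizers $f\ne g$, both proper, with $A(f)=A(g)$. Choose an index $j$ with $f(j)\ne g(j)$; as the $v_i$ are distinct we have $v_{f(j)}\ne v_{g(j)}$. On the open set $U_{f,g}\subset\mathcal A$ where $f$ and $g$ are simultaneously proper, the difference $A(f)-A(g)=\log\frac{u_jv_{f(j)}}{u_jv_{g(j)}}+C$, where $C$ collects the terms not involving $u_j$, is real-analytic and non-constant in $u_j$ alone, hence non-constant on every connected component of $U_{f,g}$; its zero set is therefore nowhere dense in $U_{f,g}$, and so nowhere dense in $\mathcal A$. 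Since the non-unique set is contained in the finite union $\bigcup_{f\ne g}\big(\{A(f)=A(g)\}\cap U_{f,g}\big)$, it is nowhere dense, and as $\mathcal B$ is open its complement in $\mathcal B$, namely $\mathcal S$, is dense.

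The main obstacle in both halves is the behavior of $A$ at improper assignment functions, that is, the discontinuity one naively expects from the $-\infty$ convention. For openness the resolution is exactly that every summand is $\le 0$, so a term whose argument degenerates to $0$ drags the whole sum to $-\infty$ rather than allowing a jump upward; this is what makes $A(f)$ genuinely continuous into $[-\infty,0]$ and the perturbation argument legitimate. For density the only nontrivial input is the non-degeneracy of $A(f)-A(g)$, which rests on distinctness of the $v_i$; this is the place to be careful, since without it two maximizers could in principle persist under every perturbation. Everything else is routine once the reduction to uniqueness of the maximizer is in place.
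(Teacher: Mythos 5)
Your proposal is correct, and its skeleton is the paper's: reduce via Theorem \ref{main} to uniqueness of the maximizer of $A(\cdot)$, get openness of the ambient set $\mathcal A_c$ from Propositions \ref{Aleksandrov open} and \ref{concentration open}, prove openness of the unique-maximizer set by continuity, and prove density by perturbing a single $u_j$ using distinctness of the $v_i$. The differences are in packaging, and one of them is worth recording. For continuity, the paper works with $A'(f)=e^{A(f)}$ to tame the $-\infty$ values, while you work directly with $A(f)$ as a continuous map into $[-\infty,0]$; these are equivalent (exponentiation is a homeomorphism $[-\infty,0]\to[0,1]$), and your observation that all summands are $\le 0$ is exactly what makes this legitimate. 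For density, the paper argues pointwise at each bad pair: it intersects the properness sets $\mathcal A_{f_i}$ of all assignment functions proper at that pair, builds a dense open subset $\mathcal B_1$ where the proper values separate, and a neighborhood $\mathcal B_2$ where the improper ones stay dominated, then intersects. You instead bound the whole bad set at once by the finite union $\bigcup_{f\ne g}\bigl(\{A(f)=A(g)\}\cap U_{f,g}\bigr)$ and show each piece is nowhere dense, which sidesteps the paper's separate treatment of improper assignment functions near the bad point (the $\mathcal B_2$ step) entirely; the key trick is that at a bad point both maximizers are automatically proper, so the containment only requires properness at the bad point itself. Your nowhere-density claim does need, and your slice argument supplies, non-constancy of $A(f)-A(g)$ on every connected component of $U_{f,g}$ (the $u_j$-slice through any point is an intersection of two open hemispheres, hence connected and contained in that component), together with closedness of the equality set in $U_{f,g}$. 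One small correction: you cite Lemma \ref{Hall} for $\mathbb F_p\ne\emps$ on $\mathcal B$, but that lemma is stated for equal-weight $\mu$ and $\lambda$, whereas in $\mathcal A$ the coefficients $\mu_i$ may exceed $1$; the nonemptiness of $\mathbb F_p$ in this generality is asserted by Theorem \ref{main} itself, which you are already invoking, so the gap is purely one of citation.
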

\begin{proof} 
	Let $\mathcal A_c$ be a subset of $\mathcal A$ consisting of pairs of measures that are weak Aleksandrov related and where $\mu$ is not concentrated on a closed hemisphere. Openness of $\mathcal A_c$ follows from Propositions \ref{concentration open} and \ref{Aleksandrov open}. Define $A'(f):=e^{A(f)}$, where $A'(f)=0$ if  $A(f)=-\infty$. We note that the set $\mathbb F$ of assignment functions remains the same for any pair of measures from $\mathcal A$. For each $f\in\mathbb F$, we also define the set $\mathcal A_f:=\{(\mu,\lambda)\mid \forall j \text{ } u_jv_{f(j)}>0\}$.

	 First, we note that $A'(f)$ is a continuous function on $\mathcal A$ as a function from pairs of measures into $\mathbb R$. Moreover, $A'(f)=A'(g)$ if and only if $A(f)=A(g)$, and $A'(f)>A'(g)$ if and only if $A(f)>A(g)$. Therefore, $A(\cdot)$ is uniquely maximized for a fixed pair of measures if and only if $A'(\cdot)$ is uniquely maximized for the same pair. Recall that by Theorem \ref{main}, to prove the statement of the theorem, it suffices to show that the assignment functional is uniquely maximized on a dense open subset of $\mathcal A_c$.

	Suppose for some pair $(\mu,\lambda)\in\mathcal A_c$, the functional is uniquely maximized. Thus, there exists $f\in\mathbb F$, such that $A(f)>A(g)$ for every other $g\in\mathbb F$. Moreover, $A(f)>-\infty$ by Theorem \ref{main}, since $(\mu,\lambda)\in\mathcal A_c$. Therefore, $A'(f)>A'(g)$ for every other $g\in\mathbb F$, and $A'(f)>0$. Then, since there are finitely many $g\in\mathbb F$, by continuity of $A'(f)$ and $A'(g)$ with respect to pairs of measures, there exists a neighborhood of $(\mu,\lambda)$ such that $A'(f)$ is still a maximizer and $A'(f)>0$. Thus, $A(f)$ is a unique maximizer for a neighborhood of $(\mu,\lambda)$. This shows that the set where the assignment functional is uniquely maximized is open in $\mathcal A$. Since $\mathcal A_c$ is open, this set is also open in $\mathcal A_c$.

	Consider some $\mathcal A_f\cap \mathcal A_g=\varnothing$. We would like to show that $A(f)\neq A(g)$ on a dense open subset of $\mathcal A_f\cap \mathcal A_g$. First of all, $A(f)$ and $A(g)$ are continuous functions on  $\mathcal A_f\cap \mathcal A_g$, and thus $A(f)\neq A(g)$ on an open subset of $\mathcal A_f\cap \mathcal A_g$. Suppose now for some pair $(\mu,\lambda)\in \mathcal A_f\cap \mathcal A_g$, we have $A(f)=A(g)$. Then: 
	\begin{equation}\label{Sept 17.111}
	\begin{split}
 & A(f)=\sum_{j=1}^k\log{u_j v_{f(j)}}=\sum_{j=1}^k\log{u_j v_{g(j)}}=A(g) \Leftrightarrow \\
  &\sum_{j=1}^k\log(\frac {u_j v_{f(j)}} {u_j v_{g(j)}})=0.
  \end{split}
\end{equation}
	 Since $f\neq g$, we can find a $j$ such that $f(j)\neq g(j)$. Since the $v_i$ are distinct, we have that $v_{f(j)}\neq v_{g(j)}$. Thus, there exists a small variation of $u_j$ such that all other terms are held constant while $\log(\frac {u_j v_{f(j)}} {u_j v_{g(j)}})$ changes, which makes $A(f)\neq  A(g)$. This shows that $A(f)\neq A(g)$ is a dense subset of $\mathcal A_f\cap \mathcal A_g$, making it a dense open subset of $\mathcal A_f\cap \mathcal A_g$.

	 Now consider some $(\mu,\lambda)\in\mathcal A_c$ such that the functional is not uniquely maximized. Suppose $\{f_i\}_{i\in I}$ are all proper assignment functions. That is, $A(f_i)>-\infty$. Let $\mathcal A_I:=\bigcap \mathcal A_{f_i}$, which is non-empty open set containing $(\mu,\lambda)$. From the previous argument, for each pair $i_1,i_2\in I$, $A(f_{i_1})\neq A(f_{i_2})$ on a dense open subset of $\mathcal A_{f_{i_1}}\cap \mathcal A_{f_{i_2}}$. Hence, $A(f_{i_1})\neq A(f_{i_2})$ on a dense open subset of $\mathcal A_I$. Since a finite intersection of dense open sets is dense and open, there exists a dense open set $\mathcal B_1\subset A_I$ such that all $A(f_i)$ are distinct. Hence, all $A'(f_i)$ are distinct on $\mathcal B_1$. 
	 
	Since $(\mu,\lambda)\in\mathcal A_c$, by Theorem \ref{main}, there exists an $f\in\mathbb F$ such that $A'(f)>0$. By continuity, there exists a neighborhood $\mathcal B_2$ of $(\mu,\lambda)$ such that $A'(f)>A'(g)$ for each $g\notin\{f_i\}_{i\in I}$, since $A'(g)=0$ at $(\mu,\lambda)$. Combining both sets $\mathcal B_1$ and $\mathcal B_1$, we see that for each $(\mu,\lambda)\in \mathcal B:=\mathcal B_1\cap \mathcal B_2$, the functional is uniquely maximized. Moreover, $\mathcal B$ is a dense open set of some neighborhood of $(\mu,\lambda)$. Therefore, there exists a pair of measures with a unique maximizer sufficiently close to $(\mu,\lambda)$. This shows that the set where assignment functional is uniquely maximized is dense in $\mathcal A_c$.

\end{proof}	
\begin{rmk}
Instead of the above generic formulation, we can write generic property with the respect to the Zariski topology. Since for each $f,g\in \mathbb F$, the condition \begin{equation}
  A(f)-A(g)=0
\end{equation}
are zeros of an algebraic function, it suffices to show that the set \begin{equation}
  \{(\mu,\lambda)\mid \forall f,g\text{ }A(f)\neq A(g)\}
\end{equation}
is nonempty. We will still need to separately require the non-concentration condition on $\mu$ and the weak Aleksandrov condition. 
\end{rmk}

Finally, we will give some examples of pairs of measures for which there does not exist a unique maximizer. 

\begin{ex}
	Let $n=3$ and choose some constant $l>3$. Pick any small regular spherical polygon with $2l$ vertices contained in a closed hemisphere. (That is, a polygon on a sphere with edges being great-circle arcs, and same angles between planes defined by the consecutive great-circle arcs.) Iteratively label vertices by $u_1,v_1,u_2,v_2\ldots v_l$. This regular spherical polygon naturally defines a two-dimensional polygon in $\mathbb R^3$ with the same vertices. Let $n$ be a unit normal. Choose $v_{l+1}=u_{l+1}=-n$. Let $\mu$, $\lambda$ be equal-weight discrete measures from these vectors. First of all, the measure $\mu$ is not concentrated on a closed hemisphere. It is also not hard to convince oneself that, by choosing the polygon to be small enough, one can ensure that there are exactly two maximizers of the assignment functional $f$ and $g$, where $f$ is defined by: \begin{equation}
  f(j)=j 
\end{equation}
and $g$ is:
\begin{equation}
  g(j) =
\left\{
	\begin{array}{ll}
		j-1  & \mbox{if } 1< j\leq l \\
		l & \mbox{if } j= 1 \\
		l+1 & \mbox{if } j=l+1.
	\end{array}
\right.
\end{equation}
It is also not hard to see that sets $\{u_1\ldots u_l\}$ and $\{v_1\ldots v_m\}$ have an edge normal loop which is given by piciewise linear path connecting points $x_i=v_i$.

Consider a small variation of vector $u_2$. As long as $\frac{A(f)}{A(g)}=1$, the functional is not uniquely maximized. Holding all other vectors fixed, we can vary $u_2$ as a unit vector or as a point on a sphere so that $\frac{u_2v_2}{u_2v_1}$ is constant. This variation preserves $\frac{A(f)}{A(g)}$. Now suppose $c:=\frac{u_2v_2}{u_2v_1}$, then for small perturbations: 

\begin{equation}
\begin{split}
 \frac{u_2v_2}{u_2v_1}=c  \Leftrightarrow \\
 u_2(v_2-v_1c)=0.
 \end{split}
\end{equation}
So, in particular we can move $u_2$ along some geodesic so that  $\frac{A(f)}{A(g)}=1$. After some variation in $u_2$, we can pick $v_2$ and vary it similarly along geodesic preserving
\begin{equation}
\begin{split}
 \frac{u_2v_2}{u_3v_2}=c_2  \Leftrightarrow \\
 v_2(u_2-u_3c_2)=0.
 \end{split}
\end{equation}
Note that we can only ensure these variations locally, as we would like to preserve  the weak Aleksandrov relation, concentration condition, and since we want all other assignment functionals to remain less than $A(f)=A(g)$. All of these conditions are guaranteed by the openness and continuity of  the respective sets and functions. 

This illustration shows the geodesic variation of points on sphere so that solution to every variation doesn't exist. This variation preserves the edge-normal loop condition as well. 
\end{ex}

\section{Appendix}
These are results from  \cite{Semenov} which we use in the proof of Theorem \ref{main}. Recall that weak Aleksandrov relation was defined for discrete measures in Definition \ref{weak Aleksandrov related}. For general measures, we define the weak Aleksandrov relation as the following:

\begin{defi}\label{General Weak Alekss Defi}
Given two Borel measures $\mu$ and $\lambda$ on $\sn$, a measure $\mu$ is weak Aleksandrov related to $\lambda$ if $\mu(\sn)=\lambda(\sn)$ and for each closed set $\omega\subset\sn$ contained in closed hemisphere, there exists an $\alpha\in(0,1)$ such that
	\begin{equation}
		\mu(\omega)\leq\lambda(\omega_{\frac\pi2-\alpha}).
	\end{equation}
\end{defi}

Note that Proposition \ref{uniform constant} shows that Definition \ref{General Weak Alekss Defi} for general measures agrees with Definition \ref{weak Aleksandrov related} given for discrete measures. Equipped with more general form of weak Aleksandrov relation, we would like to mention the following results from \cite{Semenov} which we use in the proof of \ref{main}:

\begin{thm}\label{appendix main}[Theorem 1.4 in \cite{Semenov}]
	Suppose $\mu$ is a discrete Borel measure not concentrated on a closed hemisphere, and $\lambda$ is an absolutely continuous Borel measure. Suppose $\mu$ is weak Aleksandrov related to $\lambda$. Then, there exists a $K\in\kno$ such that $\mu=\lambda(K,\cdot)$. 
\end{thm}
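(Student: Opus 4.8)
The plan is to realize $K$ as a maximizer of the functional $\Phi(\,\cdot\,,\mu,\lambda)$ of \eqref{GIP functional}, following the variational scheme of \cite{GIP} but adapted to the non-strict (weak) relation. Since $\mu$ is discrete with atoms at the $v_i$, I would restrict the search to the finite-dimensional family $\po_\mu$ of polytopes $P=(\bigcap_iH^-(1/\alpha_i,v_i))^*$, parameterized by the radial data $\alpha=(\alpha_1,\dots,\alpha_m)$ with $\alpha_i=\rho_P(v_i)>0$. Using $h_{P^*}=1/\rho_P$ one rewrites
\[
\Phi(P,\mu,\lambda)=\sum_{i=1}^m\mu_i\log\alpha_i-\int_{\sn}\log h_P\,d\lambda,\qquad h_P(u)=\max_i\alpha_i\,(v_i\cdot u),
\]
which is scale invariant because $\mu(\sn)=\lambda(\sn)$; so I normalize by $R_P=1$, i.e. $\max_i\alpha_i=1$. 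The assumption that $\mu$ is not concentrated on a closed hemisphere gives $0\in\operatorname{int}\operatorname{conv}\{v_i\}$, so that $h_P>0$ everywhere and $\po_\mu\neq\emps$.

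The heart of the argument is an a priori estimate (the quantitative content of Lemma \ref{weak Aleks bound}): along a maximizing sequence the polytopes must stay non-degenerate, $r_P\ge c>0$ for some $c=c(\mu,\lambda)$. I would argue by contradiction, supposing $\alpha_i\to0$ for a nonempty set $S$ of indices while the $\alpha_i$ stay bounded below for $i\in T=\{1,\dots,m\}\setminus S$. Writing $\omega_T^-=\{u:v_i\cdot u\le0\ \forall i\in T\}$, on this set $h_P$ collapses to its $S$-part and is of order $s:=\max_{i\in S}\alpha_i$, so a direct computation gives $\Phi\approx\log s\,\big(\mu(S)-\lambda(\omega_T^-)\big)+O(1)$. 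Applying the weak relation to $\omega=\bla\{v_i:i\in T\}\bra$ (whose $\tfrac\pi2$-enlargement is the complement of $\omega_T^-$) yields exactly $\lambda(\omega_T^-)\le\mu(S)$; the subcase where $\{v_i:i\in T\}$ is not contained in a hemisphere is trivial, since then $\omega_T^-=\emps$ and the negative $\mu$-term forces $\Phi\to-\infty$.

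This is where I expect the real obstacle, and the place where absolute continuity of $\lambda$ is indispensable. In the strong (strict) relation of \cite{GIP} one obtains $\lambda(\omega_T^-)<\mu(S)$, so $\Phi\to-\infty$ and degeneration is impossible; but under the weak relation the equality $\lambda(\omega_T^-)=\mu(S)$ can occur, the leading $\log s$ term then cancels, and the naive estimate gives only boundedness of $\Phi$. To exclude that the supremum is approached solely along a degenerating sequence, I would analyze the subleading term: the discrepancy $\lambda(A_s)-\mu(S)$, where $A_s=\{h_P=h_S\}\supseteq\omega_T^-$, is controlled by the $\lambda$-mass of the thin shell $\{0<h_T\le Cs\}$, which tends to $0$ as $s\to0$ precisely because $\lambda$ is absolutely continuous. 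Tracking the sign of $d\Phi/ds$ for small $s$ should then show that it is strictly favorable to pull the collapsing vertices out, forcing the maximizer to satisfy $r_P\ge c$. This borderline analysis, driven by the vanishing of shell measures, is exactly what fails for discrete $\lambda$ and is the crux of the whole theorem.

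With the a priori bound in hand the rest is routine. By Blaschke selection a maximizing sequence subconverges to some $P\in\kno$ (origin interior since $r_P\ge c$), and continuity of $\alpha\mapsto\int\log h_P\,d\lambda$ — again using that $\lambda$ is absolutely continuous, so $\log h_P$ is $\lambda$-integrable and the boundaries between normal cones are $\lambda$-null — shows $P$ is a genuine maximizer. Finally I would read off the Euler--Lagrange conditions: $\Phi$ is differentiable in each $\alpha_i$ (at $\lambda$-a.e. $u$ the maximizing index in $h_P$ is unique) and $\partial h_P/\partial\alpha_i=v_i\cdot u$ on $\balpha_P(v_i)$, so
\[
\frac{\partial\Phi}{\partial\alpha_i}=\frac{\mu_i}{\alpha_i}-\frac{1}{\alpha_i}\,\lambda\big(\balpha_P(v_i)\big)=0\ \Longrightarrow\ \mu_i=\lambda\big(\balpha_P(v_i)\big).
\]
A short perturbation argument (if some $\alpha_{i_0}v_{i_0}$ were not a genuine vertex, increasing $\alpha_{i_0}$ raises the $\mu$-term without changing $h_P$, contradicting maximality) shows every $v_i$ is a true vertex, so these identities hold for all $i$; summing and using $\lambda(P,\sn)=\lambda(\sn)=\mu(\sn)$ forces $\lambda(P,\cdot)$ to be concentrated on $\{v_i\}$, giving $\mu=\lambda(P,\cdot)$ with $K:=P\in\kno$.
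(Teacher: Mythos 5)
Two preliminary remarks on the comparison itself: the paper does not actually prove this theorem here --- it is imported from the author's companion paper, with only its quantitative core, Lemma \ref{weak Aleks bound}, quoted --- so you can only be judged against that statement and the structure it implies. Your variational skeleton (maximize $\Phi$ over $\po_\mu$ using scale invariance, derive $\mu_i=\lambda(\balpha_P(v_i))$ from the Euler--Lagrange equations, use absolute continuity to make normal-cone boundaries $\lambda$-null) is the natural frame and is consistent with that lemma. But the step you yourself identify as the crux is wrong, in two linked ways.

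First, you prove from the wrong hypothesis. In Theorem \ref{appendix main}, ``weak Aleksandrov related'' means the Appendix definition: for every closed $\omega$ contained in a closed hemisphere there exists $\alpha>0$ with $\mu(\omega)\le\lambda(\omega_{\pi/2-\alpha})$. You only ever invoke the discrete-case form $\mu(\omega)\le\lambda(\omega_{\pi/2})$ (this is exactly your inequality $\lambda(\omega_T^-)\le\mu(S)$). For discrete pairs the two are equivalent (Proposition \ref{uniform constant}); for absolutely continuous $\lambda$ they are not, and with only the $\pi/2$ version the theorem is \emph{false}. Take $n=2$, let $v_1,v_2,v_3$ be the equilateral directions, $\mu_1=\mu_2=\mu_3=1$, and let $\lambda$ have everywhere positive density, symmetric about the $v_1$-axis, with $\lambda(\{u\cdot v_1>0\})=1$ and the mass of the opposite half concentrated near $v_2,v_3$. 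One checks $\mu(\omega)\le\lambda(\omega_{\pi/2})$ for every arc $\omega$; yet for any $K\in\kno$ the set $\balpha_K(v_1)$ is a \emph{compact} subset of the open half circle $\{u\cdot v_1>0\}$ (Lemma \ref{bound on normal cone}), so $\lambda(\balpha_K(v_1))<1=\mu_1$, and no solution exists. Since this counterexample has absolutely continuous $\lambda$, absolute continuity cannot substitute for the missing strength of the hypothesis, as your proposal hopes.

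Second, your mechanism for the equality case has the sign backwards. Parameterize the collapse by $\alpha_i=s\beta_i$ for $i\in S$ with the $T$-coordinates fixed, and write $h_T,h_S$ for the two parts of $h_P$. Then, for a.e.\ $s$,
\begin{equation*}
 s\,\frac{d\Phi}{ds}\;=\;\mu(S)-\lambda\bigl(\{u: h_S(u)>h_T(u)\}\bigr)\;\le\;\mu(S)-\lambda(\omega_T^-)\;\le\;0,
\end{equation*}
because $\{h_S>h_T\}\supset\omega_T^-$ (there $h_S=h_P>0\ge h_T$); in the equality case $\mu(S)=\lambda(\omega_T^-)$ the right side equals $-\lambda(\{0<h_T<h_S\})\le 0$. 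So pulling the collapsing vertices out \emph{never} increases $\Phi$: degeneration is weakly favorable, the exact opposite of what you claim to show. (Your supporting remark is also off: $\lambda(\{0<h_T\le Cs\})\to 0$ as $s\to0$ for \emph{every} finite Borel measure, by continuity from above; absolute continuity plays no role in that.) What actually rescues existence is the stronger hypothesis: applied to $\omega=\{v_i\}_{i\in T}$ it yields $\lambda(\{0<\max_{i\in T}v_i\cdot u\le\sin\alpha\})\le\mu(S)-\lambda(\omega_T^-)$, so in the equality case a shell of \emph{definite} width is $\lambda$-null; then the displayed derivative vanishes identically once $s$ is small, $\Phi$ becomes constant along collapse rays, and a maximizing sequence can be retracted into the region $r_P/R_P\ge C$ without decreasing $\Phi$. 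This is precisely the shape of Lemma \ref{weak Aleks bound}, including its quantifiers (the constant depends only on the $v_i$ and the uniform constant $\alpha$, not otherwise on $\lambda$). Your write-up is missing exactly this ingredient, and it is the heart of the theorem.
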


\begin{lem}\label{weak Aleks bound}[Lemma 4.6 in \cite{Semenov}]
Suppose $\mu$ is a discrete Borel measure not concentrated on a closed hemisphere, and $\lambda$ is an absolutely continuous Borel measure. Suppose $\mu$ is weak Aleksandrov related to $\lambda$. Let $\alpha$ be a uniform constant for the weak Aleksandrov assumption. Then, there exists a polytope solution $P$ to the Gauss Image problem of the form: 
\begin{equation}
  P=(\bigcap_{i=1}^mH^-(\alpha_i,v_i))^*,
\end{equation}
such that $\frac{\mathfrak r_P}{\mathfrak R_p}$ is bounded from below by a constant $C_{\mu,\lambda}$, depending only on vectors $v_i$ and the uniform weak Aleksandrov constant $\alpha$. Besides being dependent on $\alpha$, this constant is independent of $\lambda$.
 \end{lem}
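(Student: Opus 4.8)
The plan is to produce the solution from the existence theorem and then extract the geometric bound directly from the defining identities $\lambda(\balpha_P(v_i))=\mu_i$, using the weak Aleksandrov constant $\alpha$ to keep each vertex away from the origin. First I would invoke Theorem \ref{appendix main} to obtain $K\in\kno$ with $\mu=\lambda(K,\cdot)$, and pass to the polytope $P\in\po_\mu$ with $r_P(v_i)=r_K(v_i)$ exactly as in Proposition \ref{exist polytope}, so that $\lambda(\balpha_P(v_i))=\mu_i$ for all $i$. Normalizing $R_P=1$, I reduce the claim to a uniform lower bound $\rho_P(v_i)\ge c(\alpha,\{v_i\})$: since the directions $v_i$ are fixed and $\mu$ is not concentrated on a closed hemisphere, the origin lies in the interior of $\text{conv}\{v_i\}$, and from $0\in P$ one gets $(\min_i\rho_P(v_i))\,\text{conv}\{v_i\}\subseteq P\subseteq B(0,1)$, whence $r_P/R_P=r_P\ge(\min_i\rho_P(v_i))\,r_{\text{conv}\{v_i\}}$, with $r_{\text{conv}\{v_i\}}>0$ fixed.

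The lower bound on $\rho_P(v_i)$ is where the hypothesis does the work. Applying Proposition \ref{uniform constant} to the closed set $\{v_j:j\ne i\}$ gives $\mu(\sn)-\mu_i=\mu(\{v_j:j\ne i\})\le\lambda\big((\{v_j:j\ne i\})_{\frac\pi2-\alpha}\big)=\lambda(\sn)-\lambda(T_i^\alpha)$, where $T_i^\gamma:=\{u\in\sn:u\cdot v_j\le\sin\gamma\ \text{for all }j\ne i\}$; since $\mu(\sn)=\lambda(\sn)$ this reads $\mu_i\ge\lambda(T_i^\alpha)$. On the geometric side, writing the vertex condition $u\in\balpha_P(v_i)\iff \rho_P(v_j)(v_j\cdot u)\le\rho_P(v_i)(v_i\cdot u)$ for all $j$, one checks $\balpha_P(v_i)\supseteq\{v_j:j\ne i\}^*=T_i^0$ for every $\rho_P(v_i)>0$, while $\balpha_P(v_i)\subseteq T_i^{\beta}$ as soon as $\sin\beta\ge\rho_P(v_i)/\min_{j\ne i}\rho_P(v_j)$ (from $\rho_P(v_j)(v_j\cdot u)\le\rho_P(v_i)$). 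Thus a too-small $\rho_P(v_i)$ would trap the normal cone inside $T_i^{\beta}$ with $\beta<\alpha$, giving $\mu_i=\lambda(\balpha_P(v_i))\le\lambda(T_i^{\beta})$, which sits strictly below $\lambda(T_i^\alpha)\le\mu_i$ once $\beta$ is pushed under $\alpha$ — a contradiction. The threshold value of $\rho_P(v_i)$ at which $\beta$ reaches $\alpha$ is the sought constant $c(\alpha,\{v_i\})$.

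The main obstacle is the simultaneous degeneration of several vertices: the containment $\balpha_P(v_i)\subseteq T_i^{\beta}$ with $\sin\beta=\rho_P(v_i)/\min_{j\ne i}\rho_P(v_j)$ is only useful when the competing vertices are of order one, so one cannot bound the smallest radius in isolation. I expect to handle this by ordering the radii and arguing from the largest downward — equivalently, by an induction on $m$ — applying Proposition \ref{uniform constant} to the nested sets of already-controlled directions, and then using a compactness argument over the fixed configuration $\{v_i\}$ to conclude that the final constant depends only on $\{v_i\}$ and $\alpha$; the only role of $\lambda$ is through $\alpha$ and through the strictness $\lambda(T_i^{\beta})<\lambda(T_i^\alpha)$, which the gap in the weak Aleksandrov inequality supplies. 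A cleaner alternative I would keep in reserve is variational: maximize $\Phi(\cdot,\mu,\lambda)$ from \eqref{GIP functional} over $\po_\mu$ with $R=1$ and show $\Phi\to-\infty$ as $\min_i\rho_P(v_i)\to0$, since the contribution $\mu_i\log\rho_P(v_i)$ is overwhelmed only if $\lambda(\balpha_P(v_i))\ge\mu_i$, whereas $\lambda(\balpha_P(v_i))\to\lambda(T_i^0)<\lambda(T_i^\alpha)\le\mu_i$; the strict deficit pins the maximizer off the boundary with an explicit rate in $\alpha$.
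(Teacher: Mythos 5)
This paper states Lemma \ref{weak Aleks bound} without proof (it is imported from the companion paper), so your proposal has to stand on its own; unfortunately it has a genuine gap, and the gap is strategic rather than technical. You try to prove the ratio bound for an \emph{arbitrary} solution handed to you by Theorem \ref{appendix main}, but no bound depending only on $\{v_i\}$ and $\alpha$ holds for arbitrary solutions --- the lemma claims only that \emph{some} solution satisfies it, and the two statements genuinely differ. Concretely, let $n=2$, let $v_1,v_2,v_3$ be unit vectors at mutual angles $120^\circ$, $\mu=\delta_{v_1}+\delta_{v_2}+\delta_{v_3}$, and let $\lambda$ be absolutely continuous, consisting of three bumps of mass $1$ supported on arcs of width $10^\circ$ centered at $v_1,v_2,v_3$. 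Then $\mu$ is not concentrated on a closed hemisphere and $\mu$ is weak Aleksandrov related to $\lambda$ with uniform constant $\alpha=\pi/4$. Each bump lies strictly inside the corresponding set $T_i^0=\{u:\ u\cdot v_j\le 0\ \forall j\ne i\}$, and by your own (correct) containment $\balpha_P(v_i)\supseteq T_i^0$, valid for every choice of radii, the polytope $P_{\rho_1}$ with vertices $\rho_1v_1,\,v_2,\,v_3$ satisfies $\mu=\lambda(P_{\rho_1},\cdot)$ for \emph{every} $\rho_1\in(0,1]$. Since $r_{P_{\rho_1}}/R_{P_{\rho_1}}\to0$ as $\rho_1\to0$, solutions with arbitrarily degenerate ratio exist, so no argument that starts from an unspecified solution (Theorem \ref{appendix main} gives no control on $K$) can reach the conclusion: the solution in the lemma must be specifically constructed or selected.

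The technical point where your argument breaks is the assertion that $\lambda(T_i^\beta)$ ``sits strictly below'' $\lambda(T_i^\alpha)$ once $\beta<\alpha$. Absolute continuity does not make $\lambda$ positive on open sets, and the weak Aleksandrov relation --- unlike the strong relation of \cite{GIP}, whose defining feature is exactly that strictness --- yields only the non-strict bound $\lambda(T_i^\alpha)\le\mu_i$. In the example above $\lambda(T_i^\alpha\setminus T_i^\beta)=0$, so from $\mu_i\le\lambda(T_i^\beta)$ you can conclude only $\lambda(T_i^\alpha\setminus T_i^\beta)=0$, which is no contradiction. Your reserve variational sketch has the same defect: it needs $\lambda(T_i^0)<\lambda(T_i^\alpha)$, which fails here, and indeed one computes that $\Phi(P_{\rho_1},\mu,\lambda)$ is \emph{constant} in $\rho_1$ (the term $\mu_1\log\rho_1$ is exactly cancelled by the contribution of the bump at $v_1$ to $\int\log\rho_{P^*}\,d\lambda$), so $\Phi$ does not tend to $-\infty$ and cannot pin a maximizer away from degeneration. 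Two smaller inaccuracies: Propositions \ref{exist polytope} and \ref{uniform constant} are stated and proved only for discrete $\lambda$, so you cannot invoke them verbatim for absolutely continuous $\lambda$ (the first adapts easily, the second is exactly what is in question); and the Appendix definition of the weak Aleksandrov relation quantifies only over closed sets contained in a closed hemisphere, whereas your set $\{v_j:\ j\ne i\}$ need not lie in one, so even the non-strict inequality $\lambda(T_i^\alpha)\le\mu_i$ is unjustified in general. A correct proof must build the bound into the construction of one particular solution (e.g.\ a maximizer of $\Phi$ over a normalized class, estimated against a fixed competitor body), rather than deduce it for whatever solution the existence theorem provides.
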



\frenchspacing
\bibliographystyle{cpam}

\end{document}